\let\mathbb\mathds
\DeclareMathAlphabet\oldmathcal{OMS}        {cmsy}{b}{n}
\SetMathAlphabet    \oldmathcal{normal}{OMS}{cmsy}{m}{n}
\DeclareMathAlphabet\oldmathbcal{OMS}       {cmsy}{b}{n}
\newtheorem{theorem}{Theorem}[section]
\newtheorem{lemma}[theorem]{Lemma}
\newtheorem{proposition}[theorem]{Proposition}
\newtheorem{corollary}[theorem]{Corollary}
\newtheorem{definition}[theorem]{Definition}
\newenvironment{example}{\medskip \refstepcounter{theorem}
\noindent  {\bf Example \thetheorem}.\rm}{\,}
\newenvironment{remark}{\medskip \refstepcounter{theorem}
\newcommand{\mute}[2] {}
\newcommand     {\printname}[1] {}

\noindent  {\bf Remark \thetheorem}.\rm}{\,}
\newtheorem{thm}[theorem]{Theorem}
\newtheorem{defn}[theorem]{Definition}
\newtheorem{prop}[theorem]{Proposition}
\newtheorem{Thm}{Theorem}
\newtheorem{Cor}[Thm]{Corollary}
\newcommand{\bb}{\mathbb}
\newcommand{\delbar}{\bar{\partial}}
\def\d{\partial}
\def\<{\langle}
\def\>{\rangle}                                    
\def\a{\alpha}                                     
\def\g{\gamma}                                     
\def\BOne{{\mathchoice {\rm 1\mskip-4mu l} {\rm 1\mskip-4mu l}
                          {\rm 1\mskip-4.5mu l} {\rm 1\mskip-5mu l}}}
\def\fract#1#2{\raise4pt\hbox{$ #1 \atop #2 $}}
\def\decdnar#1{\phantom{\hbox{$\scriptstyle{#1}$}}
\left\downarrow\vbox{\vskip15pt\hbox{$\scriptstyle{#1}$}}\right.}
\def\decupar#1{\phantom{\hbox{$\scriptstyle{#1}$}}
\left\uparrow\vbox{\vskip15pt\hbox{$\scriptstyle{#1}$}}\right.}
\def\bbc{{\mathbb C}}
\def\bbp{{\mathbb P}}
\def\bbq{{\mathbb Q}}
\def\bbr{{\mathbb R}}
\def\bbz{{\mathbb Z}}
\def\gra{\alpha}
\def\grb{\beta}
\def\grd{\delta}
\def\grg{\gamma}
\def\gri{\iota}
\def\grk{\kappa}
\def\gro{\omega}
\def\grr{\rho}
\def\grt{\tau}
\def\grz{\zeta}
\def\grD{\Delta}
\def\grO{\Omega}
\def\grS{\Sigma}
\def\bfa{{\bf a}}
\def\bfp{{\bf p}}
\def\bfw{{\bf w}}
\def\bfx{{\bf x}}
\def\bfz{{\bf z}}
\def\cala{{\mathcal A}}
\def\calb{{\mathcal B}}
\def\calc{{\mathcal C}}
\def\calo{{\mathcal O}}
\def\cald{{\mathcal D}}
\def\calf{{\mathcal F}}
\def\calj{{\mathcal J}}
\def\calo{{\mathcal O}}
\def\calr{{\mathcal R}}
\def\cals{{\oldmathcal S}}
\def\calw{{\mathcal W}}
\def\calx{{\mathcal X}}
\def\la#1{\hbox to #1pc{\leftarrowfill}}
\def\ra#1{\hbox to #1pc{\rightarrowfill}}
\def\calz{{\oldmathcal Z}}
\def\ga{{\mathfrak a}}
\def\gc{{\mathfrak c}}
\def\gf{{\mathfrak f}}
\def\gg{{\mathfrak g}}
\def\gi{{\mathfrak i}}
\def\gm{{\mathfrak m}}
\def\gn{{\mathfrak n}}
\def\go{{\mathfrak o}}
\def\gr{{\mathfrak r}}
\def\gt{{\mathfrak t}}
\def\gB{{\mathfrak B}}
\def\gC{{\mathfrak C}}
\def\gD{{\mathfrak D}}
\def\gG{{\mathfrak G}}
\def\gH{{\mathfrak H}}
\def\gR{{\mathfrak R}}
\def\hook{\mathbin{\hbox to 6pt{%
                 \vrule height0.4pt width5pt depth0pt
                 \kern-.4pt
                 \vrule height6pt width0.4pt depth0pt\hss}}}
\def\d{\partial}
\def\bA{\bar{A}}
\def\bp{\bar{p}}
\def\tu{\tilde{u}}
\def\teta{\tilde{\eta}}
\def\tvarpi{\tilde{\varpi}}
\newcommand{\ol}{\overline}
\newcommand{\ul}{\underline}
\begin{document}

\title{On the Equivalence Problem for Toric Contact Structures on ${\bf S^3}$-bundles over ${\bf S^2}$ }

\author{Charles P. Boyer}
\address{Department of Mathematics and Statistics,
University of New Mexico, Albuquerque, NM 87131.}
\author{Justin Pati}
\address{Matematiska Institutionen, Uppsala Universitet, Box 480,
751 06 Uppsala, Sweden}

\email{cboyer@math.unm.edu, justin@math.uu.se} 
\thanks{The second author was partially supported by the G{\"o}ran Gustafsson Foundation for Research in Natural Sciences and Medicine.}
\keywords{Toric contact geometry, equivalent contact structures, orbifold Hirzebruch surface, contact homology, extremal Sasakian structures,}

\subjclass[2000]{Primary: 53D42; Secondary:  53C25}

\maketitle

\markboth{Equivalence Problem for Contact Structures on ${\bf S^3}$-bundles over ${\bf S^2}$ }{Charles Boyer and Justin Pati}

\begin{abstract}
We study the contact equivalence problem for toric contact structures on $S^3$-bundles over $S^2$. That is, given two toric contact structures, one can ask the question: when are they equivalent as contact structures while inequivalent as toric contact structures? In general this appears to be a difficult problem. To find inequivalent toric contact structures that are contact equivalent, we show that the corresponding $3$-tori belong to distinct conjugacy classes in the contactomorphism group. To show that two toric contact structures with the same first Chern class are contact inequivalent, we use Morse-Bott contact homology. We treat a subclass of contact structures which include the Sasaki-Einstein contact structures $Y^{p,q}$ studied by physicists in \cite{GMSW04a,MaSp05b,MaSp06}. In this subcase we give a complete solution to the contact equivalence problem by showing that $Y^{p,q}$ and $Y^{p'q'}$ are inequivalent as contact structures if and only if $p\neq p'$.
\end{abstract}

\tableofcontents

\section*{Acknowledgements}
The first author thanks Miguel Abreu, Vestislav Apostolov, David Calderbank, Paul Gauduchon, and Christina T{\o}nnesen-Friedman for helpful discussions on toric geometry.
The second author would like to thank Tobias Ekholm, Georgios Dimitroglou Rizell, and Clement Hyrvier for many useful discussions. 
\section*{Introduction}



It is well known that contact structures have only discrete invariants, that is, Gray's Theorem says that the deformation theory is trivial.  Apparently, the crudest such invariant is the first Chern class of the contact bundle $\cald$. Indeed, the mod $2$ reduction of $\cald$ is a topological invariant, namely the second Stiefel-Whitney class. A much more subtle and powerful invariant is contact homology, a small part of the more general 
symplectic field theory (SFT) of Eliashberg, Givental, and Hofer \cite{ElGiHo00} 
- which can be used to distinguish contact structures belonging to the same isomorphism class of oriented $2n$-plane bundle. 

On the other hand given two contact structures with the same invariants, when can one show that they are equivalent. In full generality this appears to be a very difficult problem. However, if we restrict ourselves to toric contact structures in dimension five, we can begin to get a handle on things. The problem is of particular interest when applied to toric contact manifolds since they have been classified \cite{Ler02a}. Thus, one is interested in when two inequivalent toric contact structures are equivalent as contact structures. Specializing further we consider all toric contact structures on $S^3$-bundles over $S^2$. It is well known that such manifolds are classified by $\pi_1(SO(4))=\bbz_2$, so there are exactly two such bundles, the trivial bundle $S^2\times S^3$ and one non-trivial bundle  $X_\infty$ (in Barden's notation \cite{Bar65}). They are distinguished by their second Stiefel-Whitney class $w_2\in H^2(M,\bbz_2)$. The problem of determining when two such toric contact structures belong to equivalent contact structures is now somewhat tractible owing to the work of Karshon \cite{Kar03} and Lerman \cite{Ler03b}. In turn their work is based on the important observation of Gromov \cite{Gro85} of how the topology of the symplectomorphism group of the symplectic structures $\gro_{k,l}$ on $S^2\times S^2$ changes as the `distance' between $k$ and $l$ changes. 

The general toric contact structures on $S^2\times S^3$ or $X_\infty$ depend on four integers $(p_1,p_2,p_3,p_4)$ which satisfy $\gcd(p_i,p_j)=1$ for $i=1,2$ and $j=3,4$. We write the contact structures as $\cald_\bfp$ using vector notation $\bfp$ for the quadruple. However, this general situation appears somewhat intractable, so we consider the special case when one of the two pairs of integers $(p_1,p_2)$ or $(p_3,p_4)$ are equal. What makes this sub-case more tractable is that a certain quotient is a Hirzebruch surface with branch divisors. To treat the sub-case we can assume that $p_3=p_4$. It is often convenient to further divide this case in two sub-cases, namely, we set $\bfp=(j,2k-j,l,l)$ for $S^2\times S^3$ and $\bfp=(j,2k-j+1,l,l)$ for $X_\infty$ with $1\leq j\leq k$. We denote either one of these contact structures by $\cald_{p_1,p_2,l,l},\cald_{j,2k-j,l,l},\cald_{j,2k-j+1,l,l}$ or simply as $\cald_{j,k,l}$ depending on which notation is more convenient. Since the first Chern class $c_1(\cald)$ of the contact bundle clearly distinguishes contact structures, we see that from contact homology in our case $p_1+p_2$ or equivalently $k$ is a contact invariant when $c_1(\cald)$ is fixed.  Our main result about inequivalence is

\begin{Thm}\label{mainthm1}
Two toric contact structures $\cald_{p_1,p_2,l,l}$ and $\cald_{p'_1,p'_2,l',l'}$ on $S^2\times S^3$ or $X_\infty$ are inequivalent contact structures if $p_1'+ p_2 '\neq (p_1 + p_2)$.
\end{Thm}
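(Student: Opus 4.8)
The plan is to distinguish the contact structures by computing a suitable version of contact homology and extracting from it the numerical invariant $p_1+p_2$ (equivalently $k$). Since all the contact structures $\cald_{p_1,p_2,l,l}$ in question are toric, we have an effective $T^3$-action, and the Reeb flow can be taken to be a rational combination of the torus generators; this puts us in the Morse--Bott situation, so we work with Morse--Bott contact homology as developed by Bourgeois. The first step is to present each $\cald_{j,k,l}$ concretely: realize the contact manifold as a sphere bundle over a weighted/orbifold Hirzebruch surface (the quotient mentioned in the introduction), identify the Reeb orbits with the critical submanifolds of the moment-map data on that orbifold base, and read off the Conley--Zehnder indices in terms of the defining integers.

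Next I would set up the grading. Because $c_1(\cald)$ is being held fixed, the relevant cylindrical contact homology is $\bbz$-graded (after the usual choice of trivialization), and I would compute, degree by degree in low degrees, the ranks of the chain groups coming from the Morse--Bott families of Reeb orbits. The key point is that the generator $k=\tfrac12(p_1+p_2)$ enters the Conley--Zehnder indices of the short Reeb orbits linearly, so that the lowest degree in which a generator appears, or the first gap in the support of the homology, or the dimensions of the Morse--Bott manifolds contributing in a fixed low degree, is governed by $p_1+p_2$. Concretely, I expect that the smallest positive degree carrying a nonzero contact homology group — or the precise ranks in the first few degrees — determines $p_1+p_2$; once that is shown, two contact structures with $p_1'+p_2'\neq p_1+p_2$ have non-isomorphic contact homology and hence are inequivalent, proving the theorem.

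A few technical matters must be dispatched along the way. One must check that contact homology is well defined here: either invoke that these contact structures are obtained from Sasakian structures admitting compatible (extremal) metrics so that the relevant moduli spaces of holomorphic curves are cut out transversally after a generic perturbation within the symmetric setup, or appeal to the available foundational results for Morse--Bott contact homology of toric contact $5$-manifolds. One also must be careful that the computation is done with a Reeb vector field whose closed orbits are all nondegenerate in the Morse--Bott sense and whose orbit spaces are the expected orbifold-Hirzebruch strata; a small perturbation of the Reeb field inside the torus keeps the count finite and Morse--Bott.

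The main obstacle I anticipate is the transversality/well-definedness of Morse--Bott contact homology in this setting together with the bookkeeping of the Conley--Zehnder indices: one has to compute the linearized return map along each family of Reeb orbits on the orbifold Hirzebruch surface, keep track of the orbifold/branch-divisor contributions to the index, and verify that the dependence on $p_1+p_2$ survives into the homology (i.e. that no cancellation in the differential washes out the degree data). Establishing that the first nonvanishing positive-degree group — or the ranks in a fixed window of low degrees — is a strictly monotone function of $p_1+p_2$ is the crux; everything else is either standard toric/orbifold geometry or an appeal to the cited foundational work on contact homology.
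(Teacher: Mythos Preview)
Your overall strategy is the paper's strategy: compute Morse--Bott contact homology and extract $p_1+p_2$ from the grading of the generators. Two points of divergence are worth flagging.

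First, you worry about ``cancellation in the differential washing out the degree data,'' but the paper disposes of the differential entirely. It shows (Proposition~\ref{diffvanish} and Theorem~\ref{conhomorbibun}) that $d_{CH}$ vanishes because the $\bbr\times S^1$ symmetry forces every moduli space to have dimension $\geq 2$, and that $\partial_{MSW}$ vanishes because $|\mu_2|^2$ is a \emph{perfect} Morse function on each orbit space. Hence the contact homology equals the chain complex, and no cancellation argument is needed.

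Second, the quotient you propose (the orbifold Hirzebruch surface) is the one the paper uses for the \emph{equivalence} result, Theorem~\ref{mainthm2}, not for the present theorem. For the index computation the paper works instead with the Reeb field $R_\bfp$ whose quotient is $\bbc\bbp(\bp_1,\bp_2)\times\bbc\bbp(1,1)$ (Lemma~\ref{t2quot}, Proposition~\ref{momMorse}), and carries out the Robbin--Salamon calculation directly on $S^3\times S^3$ via the join picture. The concrete invariant obtained is not the minimal nonvanishing degree but a count: there are exactly $p_1+p_2-1$ generators in degree less than $2(p_1+p_2+1)$, coming from $p_1-1$ and $p_2-1$ orbits in the two $2$-dimensional orbit spaces plus the minimum $\check{\gamma}_1$ in the $4$-dimensional one. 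This is Proposition~\ref{different}, which is the content of Theorem~\ref{mainthm1}. Your vaguer candidates (minimal degree, first gap) would likely also work, but the paper's count is what is actually carried out.
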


For our main result about equivalence, we need to specialize a bit further. In this case we require that $\gcd(p_2-p_1,l)$ be constant. We have
\begin{Thm}\label{mainthm2}
The two contact structures  $\cald_{p_1,p_2,l,l}$ and $\cald_{p'_1,p'_2,l,l}$ satisfying $p'_1+p'_2=p_1+p_2$ are equivalent if $\gcd(l,p_2-p_1)=\gcd(l,p'_2-p'_1)$. 
\end{Thm}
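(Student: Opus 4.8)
The plan is to produce an explicit contactomorphism by realizing both contact structures as quotients of the same overlying space and matching the induced $3$-tori up to conjugacy, invoking the machinery of Karshon \cite{Kar03} and Lerman \cite{Ler03b}. First I would recall the reduction: the contact structures $\cald_{p_1,p_2,l,l}$ arise as contact quotients of $S^7$ (or equivalently, their symplectizations are quotients of $\bbc^4\setminus\{0\}$) by a weighted circle action with weights $(p_1,p_2,l,l)$, and the leftover torus action descends to a toric structure whose moment polytope quotients to an orbifold Hirzebruch surface $S_n$ with branch divisors determined by $(p_1,p_2)$ along one pair of fibers and $l$ along the section. The key structural point, already emphasized in the excerpt, is that when $p_3=p_4=l$ the relevant symplectic quotient is a (log) Hirzebruch surface, so one is reduced to understanding when two such branched Hirzebruch surfaces carry toric symplectic forms that are symplectomorphic — and here Gromov's observation \cite{Gro85}, as exploited by Karshon, says that the symplectomorphism type of $(S^2\times S^2,\gro_{k,l})$ depends only on a coarse invariant of the pair $(k,l)$.

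The second step is to track how the circle $S^1_{\bfp}\subset T^3$ sits inside the contactomorphism group. Since $p_1'+p_2'=p_1+p_2$, the two quotients have the same total "size" (the same $k$), so the two branched Hirzebruch orbifolds are diffeomorphic and, by the Karshon--Lerman analysis, the underlying contact manifolds (with their contact structures) are the same; what differs a priori is the conjugacy class of the maximal torus $T^3$. I would show that the hypothesis $\gcd(l,p_2-p_1)=\gcd(l,p_2'-p_1')$ is exactly the condition guaranteeing that the orbifold structure along the branch divisors — equivalently the isotropy data of the $S^1_{\bfp}$-action, which is governed by the pair $(\ell, p_2-p_1)$ up to the common $\gcd$ — agrees for the two choices. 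Concretely, one produces an equivariant diffeomorphism of the two $S^7$'s intertwining the two weighted circle actions modulo reparametrization, using that weighted $S^1$-actions on spheres with the same orbit-space data and the same isotropy lattices are conjugate; this descends to a contactomorphism between $\cald_{p_1,p_2,l,l}$ and $\cald_{p_1',p_2',l,l}$.

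The third step is bookkeeping: one must check that this contactomorphism respects the distinction between $S^2\times S^3$ and $X_\infty$, i.e.\ that it does not change $w_2$; this is automatic since $w_2$ is determined by the parity data in $\bfp$ which is preserved under $p_1+p_2=p_1'+p_2'$ together with the fixed $l$, and under the normalization $\bfp=(j,2k-j,l,l)$ vs.\ $(j,2k-j+1,l,l)$ the parity of $p_1+p_2$ is the obstruction. One also checks the $\gcd$ conditions $\gcd(p_i,p_j)=1$ are preserved along the construction.

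The main obstacle I expect is the second step: passing from "the two symplectic orbifolds are symplectomorphic" (Gromov/Karshon) to an \emph{equivariant} statement matching the descended circle actions, and then lifting that equivariance back up through the Boothby--Wang/contact-quotient construction to an honest contactomorphism of the $5$-manifolds. In particular one has to verify carefully that the freedom in the symplectomorphism of the Hirzebruch surface can be chosen compatibly with the residual torus, so that the two $T^3$'s land in the same conjugacy class in $\mathrm{Con}(M,\cald)$; this is precisely where the constancy of $\gcd(l,p_2-p_1)$ must be used, and where Lerman's classification of toric contact structures \cite{Ler02a} together with Karshon's work on conjugacy of tori in symplectomorphism groups \cite{Kar03} does the heavy lifting.
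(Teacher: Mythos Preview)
Your outline has the right ingredients --- the orbifold Hirzebruch quotient, Karshon's work, and the Boothby--Wang lift --- and your obstacle paragraph correctly locates the hard step. But two things are genuinely wrong and one key lemma is missing. First, the proposal to ``produce an equivariant diffeomorphism of the two $S^7$'s intertwining the two weighted circle actions'' does not work: the circles $T(\bfp)$ and $T(\bfp')$ with $\{p_1,p_2\}\neq\{p_1',p_2'\}$ are not conjugate inside $\gC\go\gn(S^7,\eta_0)$, so there is no upstairs map to descend. The paper never argues at the level of $S^7$; everything happens on the base orbifold. Second, your stated goal of arranging that ``the two $T^3$'s land in the same conjugacy class in $\gC\go\gn(M,\cald)$'' is precisely what is \emph{false}: Theorem~\ref{equivcont}(4) shows these $3$-tori lie in distinct conjugacy classes. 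The contactomorphism one constructs is only $T^2$-equivariant, not $T^3$-equivariant; that is the whole point.

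The actual argument runs as follows. Writing $\bfp=(j,2k-j,l,l)$ one has $\gcd(l,p_2-p_1)=\gcd(l,2(k-j))=g_j$, so the hypothesis says $j,j'$ lie in the same level set $g^{-1}(i)$. By Theorem~\ref{Hirzorbi} and Lemmas~\ref{evenHirgro}--\ref{oddHirgro}, the quotient by the Reeb field $R_{j,k,l}$ is the orbifold Hirzebruch surface $(S_{n_j},\grD_{m_i})$ with a symplectic class $[\gro_{k,l,i}]$ depending only on $i$, not on $j$. One passes through the Galois cover $\BOne_{n_j}:(S_{n_j},\grD_{m_i})\to(S_{n_j},\emptyset)$ and applies Karshon's smooth result that $(S_{n_j},\emptyset)$ and $(S_{n_{j'}},\emptyset)$ with the same symplectic form are $S^1$-equivariantly symplectomorphic via some $K$. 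The lemma you are missing (Lemma~\ref{Kardiv}) is that $K$ preserves the two branch divisors $(z_1=0)$ and $(z_2=0)$ \emph{separately}: they are the two components of the fixed-point set of the $S^1$ for which $K$ is equivariant, and $K$ intertwines the $S^1$-moment maps $|z_1|^2$, so it cannot swap them. Only with this in hand is $K$ an orbifold symplectomorphism $(S_{n_j},\grD_{m_i})\to(S_{n_{j'}},\grD_{m_i})$, which then lifts via the orbifold Boothby--Wang construction to the desired $T^2$-equivariant contactomorphism.
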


Recently there has been a great deal of focus on certain toric contact structures $Y^{p,q}$ with vanishing first Chern class on $S^2\times S^3$ discovered by Gauntlett, Martelli, Sparks, and Waldram \cite{GMSW04a}, and used in their study of the AdS/CFT conjecture \cite{GaMaSpWa04c,GaMaSpWa05} (see Chapter 11 of \cite{BG05} and \cite{Spa10} and references therein). Our results give a complete answer to the contact equivalence problem for these structures.

\begin{Thm}\label{SEequiv}
Let $\phi$ denote the Euler phi function. The toric contact structures $Y^{p,q}$ and $Y^{p',q'}$ on $S^2\times S^3$ belong to equivalent contact structures if and only if $p'=p$, and for each fixed integer $p>1$ there are exactly $\phi(p)$ toric contact structures $Y^{p,q}$ on $S^2\times S^3$ that are equivalent as contact structures, and for each such contact structure $\cald_p$ there are $\phi(p)$ compatible Sasaki-Einstein metrics that are inequivalent as Riemannian metrics.  Moreover, the contactomorphism group of $\cald_p$ has at least $\phi(p)$ conjugacy classes of maximal tori of dimension three.
\end{Thm}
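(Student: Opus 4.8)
The plan is to deduce every clause from Theorems \ref{mainthm1} and \ref{mainthm2}, Lerman's toric classification, and the known Sasaki-Einstein geometry of the $Y^{p,q}$. The first step is to locate $Y^{p,q}$ inside the family $\cald_\bfp$: these structures have $c_1(\cald)=0$ and are, in the notation above, $Y^{p,q}=\cald_{p-q,\,p+q,\,p,\,p}$, so that $l=p$ and $p_1+p_2=2p$, while the admissible parameters $1\le q\le p-1$ with $\gcd(p,q)=1$ number exactly $\phi(p)$. A short argument with Lerman's classification (comparing the moment cones up to $GL(3,\bbz)$) shows that these $\phi(p)$ structures are pairwise inequivalent \emph{as toric} contact structures; this is the input that will ultimately produce the distinct conjugacy classes of tori.

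Next, the two halves of the equivalence statement come from the two theorems. If $p\ne p'$, then $p_1+p_2=2p\ne 2p'=p_1'+p_2'$, so Theorem \ref{mainthm1} shows $Y^{p,q}$ and $Y^{p',q'}$ are inequivalent contact structures, which is the ``only if''. For the ``if'', fix $p$ and note that the quantity appearing in Theorem \ref{mainthm2}, namely $\gcd(l,p_2-p_1)=\gcd(p,2q)$, is independent of the admissible $q$: using $\gcd(p,q)=1$ it equals $1$ when $p$ is odd and $2$ when $p$ is even. Hence Theorem \ref{mainthm2} applies to every pair of admissible $q,q'$, so all $\phi(p)$ of the $Y^{p,q}$ with the given $p$ are mutually contact-equivalent to one contact structure, which we name $\cald_p$; combined with the ``only if'' just shown, the $Y^{p,q}$ equivalent to $\cald_p$ are precisely the $\phi(p)$ of them with first parameter $p$.

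For the Riemannian assertion, each $Y^{p,q}$ carries its Gauntlett--Martelli--Sparks--Waldram Sasaki-Einstein metric \cite{GMSW04a}, whose underlying contact structure is $\cald_{p-q,p+q,p,p}$; pushing these forward along the contactomorphisms supplied by Theorem \ref{mainthm2} yields $\phi(p)$ Sasaki-Einstein metrics all compatible with $\cald_p$. Being Einstein with a common normalized scalar curvature, they are distinguished by volume, and for fixed $p$ the volume of $Y^{p,q}$ (the explicit expression of \cite{MaSp06}, involving $\sqrt{4p^2-3q^2}$) is an injective function of the admissible $q$; hence the $\phi(p)$ metrics are pairwise non-isometric. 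For the last clause, the defining $3$-torus $T_q$ of $Y^{p,q}$, conjugated into the contactomorphism group $\mathrm{Con}(\cald_p)$ by the same contactomorphism, is a maximal torus (a $3$-torus is the largest possible in the contactomorphism group of a $5$-manifold, as in the very definition of a toric contact structure); and if $T_q$ and $T_{q'}$ were conjugate in $\mathrm{Con}(\cald_p)$, the associated toric contact manifolds would be isomorphic, forcing $q=q'$ by the classification of the first step. Thus $\mathrm{Con}(\cald_p)$ contains at least $\phi(p)$ conjugacy classes of maximal $3$-tori.

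I expect the principal obstacle to lie in the first step: checking that the admissible range of $q$ yields $\phi(p)$ \emph{genuinely distinct} toric contact structures --- in particular excluding an $SL(3,\bbz)$-identification among them, such as a hypothetical $Y^{p,q}\cong Y^{p,p-q}$ --- since every subsequent count of contact structures, metrics, and conjugacy classes rests on that number being exactly $\phi(p)$. A secondary point requiring care is the injectivity in $q$ of the volume function used in the Riemannian assertion, and one should keep track of co-orientation conventions when translating between ``conjugate in $\mathrm{Con}(\cald_p)$'' and ``equivalent as toric contact structures''.
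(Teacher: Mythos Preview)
Your proposal is correct and follows essentially the same skeleton as the paper: the identification $Y^{p,q}=\cald_{p-q,p+q,p,p}$, the use of Theorems~\ref{mainthm1} and~\ref{mainthm2} for the two directions of the equivalence (your computation $\gcd(p,2q)=\gcd(p,2)$ matches the paper's Example~\ref{Ypqex}), and the count $\phi(p)$ all agree. The one substantive difference is the non-isometry of the Sasaki--Einstein metrics. You argue via volume, which works but requires verifying that the explicit $Y^{p,q}$ volume is injective in $q$ for fixed $p$. The paper instead argues structurally: by Tanno's theorem an isometry between two Sasaki--Einstein metrics must be a contactomorphism intertwining the $\Phi$-tensors (up to sign), hence conjugating the associated $3$-tori; this contradicts the toric inequivalence already established. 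The paper's route is cleaner because it recycles the conjugacy-class statement rather than importing an external analytic computation, and it makes the logical dependence (non-isometric metrics $\Leftarrow$ non-conjugate tori) explicit. For the toric inequivalence itself you propose a direct moment-cone comparison via Lerman's classification, which is fine, while the paper obtains it by lifting Karshon's non-conjugacy of Hamiltonian $2$-tori on Hirzebruch surfaces to the contact level; both give what is needed, and your flagged worry about a hypothetical $Y^{p,q}\cong Y^{p,p-q}$ is exactly what either of these arguments rules out.
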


A partial result, namely that, $Y^{p',1}$ and $Y^{p,1}$ are inequivalent contact structures if $p'\neq p$, was recently given by Abreu and Macarini \cite{AbMa10}, and an outline of the proof of Theorem \ref{SEequiv} was recently given by one of us \cite{Boy11}.

As a bonus we also obtain the following results concerning extremal Sasakian structures:

\begin{Cor}\label{extcor}
For both $S^2\times S^3$ and $X_\infty$ the moduli space of extremal Sasakian structures has a countably infinite number of components. Moreover, each component has extremal Sasakian metrics of positive Ricci curvature whose isometry group contains $T^3$.
\end{Cor}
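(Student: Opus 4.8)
The plan is to separate the combinatorial bookkeeping (counting the components) from the analytic input (producing the extremal metrics). The key preliminary observation is that along any smooth path $t\mapsto(\xi_t,\eta_t,\Phi_t,g_t)$ of Sasakian structures the underlying contact structures $\cald_t=\ker\eta_t$ all lie in a single isotopy class, by Gray stability; hence two Sasakian structures whose contact structures are inequivalent lie in different connected components of the moduli space of Sasakian structures, and \emph{a fortiori} in different components of the subspace of extremal ones. So I would reduce the corollary to exhibiting, on each of $S^2\times S^3$ and $X_\infty$, a countably infinite family of pairwise inequivalent toric contact structures of the type $\cald_{p_1,p_2,l,l}$, each carrying a compatible extremal Sasakian metric that has positive Ricci curvature and whose isometry group contains a $3$-torus.

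For the family I would take $\bfp_k=(1,2k-1,1,1)$ on $S^2\times S^3$ and $\bfp_k=(1,2k,1,1)$ on $X_\infty$, for $k=1,2,3,\dots$: since $p_3=p_4=1$ the coprimality conditions $\gcd(p_i,p_j)=1$ hold automatically, so each $\cald_{\bfp_k}$ is a legitimate structure of the required form, and as $p_1+p_2$ runs through the pairwise distinct values $2k$ (resp.\ $2k+1$), Theorem \ref{mainthm1} shows that the $\cald_{\bfp_k}$ are pairwise inequivalent contact structures. By the first paragraph this already produces a countably infinite family of distinct components of the moduli space of extremal Sasakian structures.

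The geometric content is producing the metrics. The point of the subcase $p_3=p_4=l$ is that the quotient of $\cald_{\bfp_k}$ by a suitable circle subgroup of $T^3$ is an orbifold Hirzebruch surface --- a compact toric K\"ahler orbifold with branch divisors along the zero and infinity sections. I would invoke the existence of an extremal K\"ahler orbifold metric on it by means of the admissible (Calabi-type) construction of Apostolov--Calderbank--Gauduchon--T{\o}nnesen-Friedman, whose admissible parameter range covers the orbifold data occurring here. Pulling such a transverse extremal K\"ahler metric back through the orbifold Boothby--Wang fibration yields a compatible extremal Sasakian metric on $\cald_{\bfp_k}$, and the $2$-torus of the toric base together with the Reeb circle assembles into a $T^3$ in the isometry group. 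For positivity of the Ricci curvature I would first arrange, using the positive (transverse log-Fano) Sasakian structures on $S^2\times S^3$ and $X_\infty$ recorded in \cite{BG05}, that the transverse geometry is Fano; a $D$-homothetic transformation with small parameter $a>0$ then preserves $\cald_{\bfp_k}$, the $T^3$-action, and the extremal property while replacing the transverse Ricci tensor by $\Ric^T-2a\,g^T$, which is positive definite for $a$ small, so that --- together with the positive Ricci curvature in the Reeb direction --- one obtains the required extremal Sasakian metric of positive Ricci curvature within the same component. For the subfamily $Y^{p,q}$ the Sasaki--Einstein metrics of Theorem \ref{SEequiv} already realize this directly.

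The main obstacle I anticipate is the last step: verifying that the orbifold Hirzebruch surfaces that actually arise here lie within the range where the admissible ansatz produces an extremal K\"ahler metric, and that the transverse data can simultaneously be taken to be log-Fano so that the $D$-homothety argument yields positive Ricci curvature. By contrast, the component count is immediate once Theorem \ref{mainthm1} is granted and the $T^3$-symmetry is automatic from toricity, so essentially all of the genuine work lies in the transverse K\"ahler geometry.
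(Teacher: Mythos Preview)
Your component-counting argument is correct and matches the paper's: inequivalent contact structures force different components, and Theorem~\ref{mainthm1} supplies countably many. The difference is in how the extremal metrics with positive Ricci curvature are produced.

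The paper does \emph{not} go through the Hirzebruch quotient for this step. Instead (see Corollary~\ref{s3s2cor} and Corollary~\ref{torext}) it uses the Reeb vector field $R_\bfp$ of Equation~(\ref{Reebsph}), whose quotient is always the product $\bbc\bbp(\bp_1,\bp_2)\times\bbc\bbp(\bp_3,\bp_4)$ of weighted projective lines. This base is log del Pezzo for \emph{every} admissible $\bfp$, and it carries an explicit extremal K\"ahler orbifold metric: the product of Bryant's Bochner-flat metrics on each factor. Lifting gives an extremal Sasakian metric on a positive Sasakian structure, so Theorem~7.5.31 of \cite{BG05} applies directly to produce positive Ricci curvature, and Calabi's theorem guarantees the full $T^3$ survives in the isometry group.

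Your Hirzebruch route runs into a real obstacle, one you anticipate but do not resolve. For your family $\bfp_k=(1,2k-1,1,1)$ the Reeb field of Theorem~\ref{Hirzorbi} has quotient the smooth Hirzebruch surface $S_{2(k-1)}$ with trivial branch divisor (since $l=1$), and by Theorem~\ref{Hirzorbi}(3) this is log del Pezzo only when $l>k-j$, i.e.\ $1>k-1$, i.e.\ $k=1$. For every $k\geq 2$ the Sasakian structure you land on is indefinite, so the $D$-homothety argument cannot produce positive Ricci curvature there. Your suggested fix---``first arrange that the transverse geometry is Fano''---requires moving to a different Reeb vector field in the Sasaki cone, but then the transverse K\"ahler orbifold is no longer a Hirzebruch surface and the ACGTF admissible ansatz is no longer the relevant source of extremal metrics; you are effectively forced back to the product quotient and Bryant's construction, which is exactly the paper's approach. (A minor point: your description of the effect of $D$-homothety on Ricci is garbled---the transverse Ricci tensor itself is unchanged under rescaling; what changes is the full Sasakian Ricci, via the relation $\Ric_g|_{\cald\times\cald}=\Ric^T-2g^T$ and its behavior under $g\mapsto g_a$.)
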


This corollary follows already from the results of \cite{Pat09,Pat10} and \cite{Boy10b}, but Theorem \ref{mainthm1} actually gives a much larger class. As shown in \cite{Boy10b} many of these components are themselves non-Hausdorff.

\begin{Cor}\label{SEcor}
The moduli space of Sasaki-Einstein metrics on $S^2\times S^3$ has a countably infinite number of components. Moreover, each such component has Sasaki-Einstein metrics whose isometry group contains $T^3$.
\end{Cor}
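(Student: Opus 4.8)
The plan is to deduce Corollary~\ref{SEcor} from Theorem~\ref{SEequiv} together with the known existence theory for the Sasaki--Einstein metrics $Y^{p,q}$. First I would recall from \cite{GMSW04a,MaSp05b,MaSp06} that for every coprime pair $(p,q)$ with $0<q<p$ the toric contact structure $Y^{p,q}$ on $S^2\times S^3$ admits a compatible Sasaki--Einstein metric, and that the Reeb vector field of such a structure is the one singled out by the volume-minimization (Futaki-type) argument; in particular each $Y^{p,q}$ supports at least one Sasaki--Einstein metric whose isometry group contains the maximal torus $T^3$. This gives an infinite family of Sasaki--Einstein metrics parametrized (at least) by $p>1$.

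Next I would invoke Theorem~\ref{SEequiv} to separate these into distinct components of the moduli space. Since any two Sasaki--Einstein metrics in the same connected component of the moduli space have, in particular, isomorphic underlying contact structures (the contact structure is locally constant, indeed constant, along a path of compatible structures by Gray stability applied to the varying contact forms), metrics compatible with $Y^{p,q}$ and with $Y^{p',q'}$ can lie in the same component only if $\cald$ for $Y^{p,q}$ is contactomorphic to $\cald$ for $Y^{p',q'}$, which by Theorem~\ref{SEequiv} forces $p=p'$. Hence the assignment $p\mapsto$ (component containing a Sasaki--Einstein metric on $Y^{p,q}$) is well defined on the level of components and takes infinitely many values as $p$ ranges over integers $>1$. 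This produces the countably infinite set of components, each by construction containing a metric whose isometry group contains $T^3$.

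The one point that needs a little care, and which I expect to be the main obstacle, is the passage from ``contact inequivalence'' as proved in Theorem~\ref{SEequiv} to ``lying in different components of the moduli space of Sasaki--Einstein \emph{metrics}.'' One must fix the notion of moduli space being used --- typically Sasakian/Sasaki--Einstein structures up to isometry and $D$-homothety, topologized so that a smooth family of compatible Sasakian structures gives a path --- and verify that along such a path the contactomorphism type of $\cald$ is constant. This is exactly Gray stability: a smooth family of contact forms $\eta_t$ with $\ker\eta_t=\cald_t$ yields an isotopy carrying $\cald_0$ to $\cald_t$. With that in hand the corollary is immediate, and the final sentence about each component containing metrics of positive Ricci curvature with $T^3$-symmetry is automatic, since Sasaki--Einstein metrics in dimension five have $\mathrm{Ric}>0$ (they satisfy $\mathrm{Ric}=4g>0$) and the torus $T^3$ acts by isometries by toricity.
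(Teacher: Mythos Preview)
Your proposal is correct and aligns with the paper's approach. The paper does not give a detailed proof of this corollary; it simply remarks that it follows from \cite{AbMa10} (the inequivalence of $Y^{p,1}$ for distinct $p$), with Theorem~\ref{mainthm1} providing a larger class, together with the existence of the Sasaki--Einstein metrics from \cite{GMSW04a}. Your use of Theorem~\ref{SEequiv} in place of Theorem~\ref{mainthm1} or \cite{AbMa10} is entirely appropriate, since Theorem~\ref{SEequiv} is precisely the specialization to the $Y^{p,q}$ family, and your Gray-stability argument spelling out why contact inequivalence forces the metrics into distinct components is the correct elaboration of what the paper leaves implicit.

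One small remark: in passing from ``same component of the moduli space'' to ``same contactomorphism type,'' you should also note (as the paper does in the proof of Corollary~\ref{Ypqcor}) that an isometry between Sasaki--Einstein metrics is a contactomorphism by Tanno's theorem \cite{Tan70}, so that the contactomorphism type is well defined on the quotient before one invokes Gray stability along a lifted path. With that addition your argument is complete.
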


This corollary also follows from \cite{AbMa10}, but Theorem \ref{mainthm1} gives a larger class.

\section{Contact Structures and Cones} 

It is well known that contact geometry is equivalent to the geometry of certain symplectic cones. However, for certain contact structures there are several cones that become important, and as we shall see they are all related. 

\noindent {\it A warning about notation}: In contact topology the contact bundle is usually denoted by $\xi$, whereas, in Sasakian geometry $\xi$ is almost always a Reeb vector field. To avoid confusion we eschew the use of $\xi$ completely, and use $\cald$ for the contact bundle and $R$ for a Reeb vector field.

\subsection{Contact Structures}
Recall that a {\it contact structure}\footnote{This is not the most general definition of a contact structure, but it suffices in most situations (cf. \cite{BG05}), and certainly for us here.} on a connected oriented manifold $M$ is an equivalence class of 1-forms $\eta$ satisfying $\eta\wedge (d\eta)^n\neq 0$ everywhere on $M$ where two 1-forms $\eta,\eta'$ are equivalent if there exists a nowhere vanishing function $f$ such that $\eta'=f\eta$. We shall also assume that our contact structure has an orientation, or equivalently, the function $f$ is everywhere positive. More conveniently the contact structure can be thought of as the oriented $2n$-plane bundle defined by $\cald=\ker\eta$, and we denote by $\gC^+(\cald)$ the set of all contact 1-forms representing the oriented bundle $\cald$. 

We denote by $\cala\calc(\cald)$ the set of almost complex structures $J$ on $\cald$ that are compatible with the contact structure in the sense that the following two conditions hold for any smooth sections $X,Y$ of $\cald$ 
$$d\eta(JX,JY)=d\eta(X,Y), \qquad d\eta(JX,Y)>0.$$
It is easy to see that these conditions are independent of the choice of 1-form $\eta$ representing $\cald$. Notice that the pair $(\cald,J)$ defines a strictly pseudoconvex almost CR structure on $M$, and a choice of contact form $\eta$ gives a choice of Levi form essentially $d\eta$. 

Also for every choice of contact 1-form $\eta$ there exists a unique vector field $R$, called the {\it Reeb vector field}, that satisfies $\eta(R)=1$ and $R\hook d\eta=0$. Such vector fields and the orbits of their flows will play a crucial role for us. We can now extend $J$ to an endomorphism $\Phi$ of $TM$ by defining $\Phi|_\cald =J$ and $\Phi R=0$. The triple $(R,\eta,\Phi)$ canonically defines a Riemannian metric on $M$ by setting $g=d\eta\circ (\Phi\otimes \BOne) +\eta\otimes\eta$, and the quadruple $(R,\eta,\Phi,g)$ is known as a {\it contact metric structure} on $M$.

Notice that $R$ defines a one dimensional foliation $\calf_R$ on $M$, often called the {\it characteristic foliation}. We say that the foliation $\calf_R$ is {\it quasi-regular} if there is a positive integer $k$ such that each point has a
foliated coordinate chart $(U,x)$ such that each leaf of
$\calf_R$ passes through $U$ at most $k$ times. If $k=1$ then
the foliation is called {\it regular}. We also say that the corresponding contact 1-form $\eta$ is {\it quasi-regular (regular)}, and more generally that a contact structure $\cald$ is {\it quasi-regular (regular)} if it has a quasi-regular (regular) contact 1-form. A contact 1-form (or characteristic foliation) that is not quasi-regular is called {\it irregular}. On a compact manifold any quasi-regular contact form is necessarily K-contact, and then the foliation $\calf_R$ is equivalent to a locally free circle action \cite{BG05} which preserves the quadruple $(R,\eta,\Phi,g)$. It is this case that we are interested in, and the quotient space $\calz=M/\calf_R$ is a compact orbifold with a naturally defined symplectic structure $\gro$ and compatible almost complex structure $\hat{J}$ satisfying $\pi^*\gro=d\eta$ and $J$ is the horizontal lift of $\hat{J}$, that is, $(\gro,\hat{J})$ defines an almost K\"ahler structure on the orbifold $\calz$. Moreover, $\eta$ can be interpreted as a connection 1-form in the principal $S^1$ orbibundle $\pi:M\ra{1.6} \calz$ with curvature 2-form $\pi^*\gro$. 

This construction has a converse, that is, beginning with a compact almost K\"ahler orbifold one can construct a K-contact structure on the total space of certain $S^1$ orbibundle over $\calz$. This is often referred to as the {\it orbifold Boothby-Wang construction}. In \cite{ElGiHo00} a contact manifold constructed in this way is called a {\it prequantization space}. In this paper we are interested in the case when both $J$ and $\hat{J}$ are integrable. Then the quadruple $(R,\eta,\Phi,g)$ is a {\it Sasakian structure} on $M$, and $(\gro,\hat{J})$ defines a is projective algebraic orbifold structure on $\calz$ with an orbifold K\"ahler metric.

\subsection{Orbifolds}
As described in the preceding paragraph, orbifolds will play an important role for us in this paper. We refer to Chapter 4 of \cite{BG05} for the basic definitions and results. Here we want to emphasize several aspects. First, many cohomology classes that are integral classes on manifolds are only rational classes on the underlying topological space of an orbifold, in particular, the orbifold first Chern class of a complex line orbibundle or circle orbibundle is generally a rational class. However, not all rational classes occur as such. In order to determine which rational classes can be used to classify line orbibundles, it is convenient to pass to Haefliger's classifying space $B\calx$ \cite{Hae84,BG05} of an orbifold $\calx$ where, as with smooth manifolds, all complex line orbibundles correspond to integral cohomology classes. Let $\calx$ be a complex orbifold with underlying topological space $X$. Then Haefliger's orbifold cohomology $H^*_{orb}(\calx,\bbz)$ equals $H^*(B\calx,\bbz)$ which is generally different than $H^*(X,\bbz)$, but satisfies $H^*_{orb}(\calx,\bbz)\otimes \bbq=H^*(X,\bbz)\otimes \bbq$. So, for example, we an obtain integral cohomology class $p^*c_1^{orb}(\calx)\in H^2_{orb}(\calx,\bbz)$ for complex line orbibundles from the rational class $c_1^{orb}(\calx)\in H^2(X,\bbq)$. This amounts to clearing the order of the orbifold in the denominator. Here $p:B\calx\ra{1.6} \calx$ is the natural projection. We warn the reader that the orbifold cohomology $H^*_{orb}(\calx,\bbz)$ is not Chen-Ruan cohomology.

The orbifolds that occur in this paper are of a special type. They are all complex orbifolds whose underlying space is a smooth projective algebraic variety with an added orbifold structure. In such cases it is convenient to view an orbifold $\calx$ as a pair $(X,\grD)$ where $X$ is a smooth algebraic variety and $\grD$ is a certain $\bbq$-divisor, called a {\it branch divisor} \cite{BGK05,BG05,GhKo05}. We write $(X,\emptyset)$ to denote the algebraic variety $X$ with the trivial orbifold structure, that is the charts are just the standard manifold charts. In this situation as emphasized in \cite{GhKo05} we consider the map $\BOne_X:(X,\grD)\ra{1.6} (X,\emptyset)$ which is the identity as a set map, and a Galois cover with trivial Galois group.

\subsection{Symplectic Cones}
Given a contact structure $\cald$ on $M$ we recall the symplectic cone $C(M)=M\times \bbr^+$ with its natural symplectic structure (the {\it symplectization} of $(M,\cald)$) $\grO=d(r^2\eta)$ where $r$ is a coordinate on $\bbr^+$. Suppose $\eta'=e^{2f}\eta$ is another contact 1-form in $\gC^+(\cald)$, then changing coordinates $r'=e^{-f}r$ we see that 
$$d(r'^2\eta')=d(r^2e^{-2f}e^{2f}\eta)=d(r^2\eta)=\grO,$$
so the symplectic structure $\grO$ on $C(M)$ depends only on the contact structure $\cald$.
Recall the Liouville vector field $\Psi=r\frac{\d}{\d r}$ on the cone $C(M)$ and notice that $$\Psi=r\frac{\d}{\d r}=r'\frac{\d}{\d r'}.$$
We have chosen the dependence of $\grO$ on the radial coordinate to be homogeneous of degree $2$ with respect to $\Psi$, since we want compatibility with cone metrics and these are homogeneous of degree $2$. 

Now for each choice of contact form $\eta\in\gC^+(\cald)$ there is a natural extension of the almost complex structure $J$ on $\cald$ to an almost complex structure $I$ on the cone $C(M)$ defined uniquely by
\begin{equation}\label{coneMstructure}
I=\Phi +\Psi\otimes\eta, \qquad I\Psi=-R
\end{equation}
where $\Phi$ is the extension of $J$ to $TM$ defined by $\Phi R=0$. The following is well known \cite{BG05} and straightforward to verify

\begin{lemma}\label{Icompat}
Let $(\cald,J)$ be a strictly pseudoconvex almost CR structure such that $\cald$ is a contact structure on $M$, and let $(C(M),\grO))$ be its symplectic cone. Then for each contact form $\eta\in\gC^+(\cald)$, the corresponding almost complex structure on $C(M)$ satisfies
\begin{enumerate}
 \item $\pounds_\Psi I=0$.
\item $I$ is compatible with $\grO$, that is, for any vector fields $X,Y$ on $C(M)$ we have $\grO(IX,IY)=\grO(X,Y)$ and $\grO(IX,Y)>0$ defines a cone metric $\bar{g}=dr^2+r^2g$.
\item The cone metric $\bar{g}$ on $C(M)$ is almost K\"ahler.
\item The contact form $\eta$ is K-contact if and only if the vector field $\Psi-iR$ is pseudoholomorphic with respect to $I$, or equivalently $\pounds_R I=\pounds_R \Phi=0$.
\item If $(\cald,J)$ is an integrable CR structure, then $I$ is integrable if and only if $\pounds_R I=\pounds_R \Phi=0.$ In this case, $(R,\eta,\Phi,g)$ defines a Sasakian structure on $M$ with metric $g=d\eta\circ(\Phi\otimes \BOne)+\eta\otimes \eta$, and $(\grO,I)$ defines a complex structure on $C(M)$ whose cone metric $dr^2+r^2g$ is K\"ahler.
\end{enumerate}

\end{lemma}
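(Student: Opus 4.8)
The plan is to verify the five assertions by direct computation in an adapted frame on the cone $C(M)=M\times\bbr^+$. Fix $\eta\in\gC^+(\cald)$ and use the splitting $TC(M)=\bbr\Psi\oplus\bbr R\oplus\cald$, where $\cald$ now denotes the pullback of the contact distribution to $C(M)$; the tensors $\Phi,\eta,R,d\eta$ are pulled back from $M$ and so are independent of $r$, while $\Psi=r\frac{\d}{\d r}$ generates the scaling $r\mapsto e^tr$ fixing the $M$-factor. By \eqref{coneMstructure}, on this frame $I$ acts by $IZ=JZ$ for $Z\in\cald$, $IR=\Psi$, $I\Psi=-R$, and $\grO=d(r^2\eta)=2r\,dr\w\eta+r^2d\eta$. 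Assertion (1) is then immediate: $\pounds_\Psi$ annihilates the $r$-independent objects $\eta$, $R$, $\Phi$ and fixes $\Psi$, so it kills every building block of $I$ (one may instead just compute $[\Psi,-]$ on the frame).

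For (2) one checks $\grO(IX,IY)=\grO(X,Y)$ pairwise on $\{\Psi,R\}\cup\cald$. The pairs involving $\Psi$ or $R$ are immediate from $\eta(R)=1$, $\eta|_\cald=0$, $\Psi\hook d\eta=0$ and $R\hook d\eta=0$; the only substantive case, $Z,W\in\cald$, reduces to $d\eta(JZ,JW)=d\eta(Z,W)$, which is the compatibility of $J$ with $\cald$. Positivity $\grO(IX,X)>0$ for $X\neq0$ follows the same way from $d\eta(JZ,Z)>0$, and reading off $\bar g(X,Y)=\grO(IX,Y)$ on the frame yields $\bar g=dr^2+r^2g$. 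Assertion (3) is then automatic: $\grO$ is exact, hence closed, and combined with (2) this is exactly the statement that $(C(M),\grO,I,\bar g)$ is almost K\"ahler.

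For (4), note first that $I(\Psi\pm iR)=\pm i(\Psi\pm iR)$, so $\Psi+iR$ is of type $(1,0)$ and $\Psi-iR$ of type $(0,1)$ for $I$; since $\pounds_\Psi I=0$ by (1), we have $\pounds_{\Psi\pm iR}I=\pm i\,\pounds_R I$, so $\Psi-iR$ is pseudoholomorphic exactly when $\pounds_R I=0$. Next, because $\pounds_R\eta=d(R\hook\eta)+R\hook d\eta=0$ and $\pounds_R\Psi=\pounds_R R=0$ (and $\pounds_R$ kills $dr$), expanding $\pounds_R I$ on $I=\Phi+\Psi\otimes\eta-R\otimes\frac{dr}{r}$ gives $\pounds_R I=\pounds_R\Phi$, whence $\pounds_R I=0\iff\pounds_R\Phi=0$. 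Finally, a contact metric structure is K-contact (i.e.\ $R$ is Killing) if and only if $\pounds_R\Phi=0$, cf.\ \cite{BG05}, which closes the chain of equivalences.

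For (5) assume $(\cald,J)$ is CR-integrable and compute the Nijenhuis tensor $N_I$ of the cone on the adapted frame. One finds $N_I(\Psi,R)=0$ identically; $N_I(\Psi,Z)$ and $N_I(R,Z)$ for $Z\in\cald$ reduce — after noting $\pounds_R Z\in\cald$, which uses $\pounds_R\eta=0$ and $R\hook d\eta=0$ — to $\pm(\pounds_R\Phi)Z$; and $N_I(Z,W)$ for $Z,W\in\cald$ equals, modulo $\Psi$- and $R$-valued terms that vanish by the symmetry and compatibility of $d\eta$ on $\cald$, the CR-Nijenhuis tensor of $(\cald,J)$, which is zero by hypothesis. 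Hence $N_I=0\iff\pounds_R\Phi=0\iff\pounds_R I=0$; when this holds, Newlander--Nirenberg makes $I$ integrable, so by (2)--(3) $(\grO,I)$ is a K\"ahler structure on $C(M)$ with K\"ahler metric $dr^2+r^2g$, while $(R,\eta,\Phi,g)$ is by definition a Sasakian structure on $M$. The one genuinely delicate point is this last Nijenhuis computation — keeping track of the $\Psi$- and $R$-components of the brackets $[JZ,W]$ and $[Z,JW]$ and checking that everything beyond the CR-Nijenhuis tensor cancels; parts (1)--(4) are routine bookkeeping once the conventions are pinned down.
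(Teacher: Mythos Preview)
Your proposal is correct and supplies precisely the direct frame-by-frame verification that the paper omits: the paper does not give a proof at all, stating only that the lemma ``is well known \cite{BG05} and straightforward to verify.'' Your computations in the adapted frame $\{\Psi,R\}\cup\cald$, the explicit formula $I=\Phi+\Psi\otimes\eta-R\otimes\frac{dr}{r}$, and the reductions $\pounds_R I=\pounds_R\Phi$ and $N_I\equiv 0\bmod\text{CR-Nijenhuis}\iff\pounds_R\Phi=0$ are exactly the content behind ``straightforward,'' so there is nothing to compare methodologically.
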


There is a converse to this result, namely: Let $(M\times \bbr^+,\grO)$ be a symplectic cone such that $\grO$ is homogeneous of degree two, then $d(\Psi\hook \grO)=\pounds_\Psi\grO=2\grO$, so $\grO$ is exact, and we can define 
$$\teta=\frac{1}{r^2}\Psi\hook \grO.$$
It is then easy to check that $\teta$ is the pullback of a contact 1-form $\eta$ on $M$, and by changing coordinates $r'=e^{-f}r$ we can accommodate all the contact forms in $\gC^+(\cald)$. We can also check that there is a 1-1 correspondence between the compatible almost complex structures $I$ on $C(M)$ and elements of $\gC^+(\cald)$, and that Equations (\ref{coneMstructure}) hold, so we recover the full contact metric structure for each $\eta\in \gC^+(\cald)$. Summarizing we have the correspondences: \bigskip
\newline (1) symplectic cone $(C(M),\grO)$ $\leftrightarrow$ contact structure $(M,\cald)$; 
\newline (2) almost K\"ahler cone $(C(M),\grO,I)$ $\leftrightarrow$ contact metric structure \break $(M,R,\eta,\Phi,g)$;
\newline (3) almost K\"ahler cone $(C(M),\grO,I)$ with $\Psi-iR$ pseudoholomorphic $\leftrightarrow$ K-contact structure $(M,R,\eta,\Phi,g)$;
\newline (4) K\"ahler cone $(C(M),\grO,I)$ with $\Psi-iR$ holomorphic $\leftrightarrow$ Sasakian structure $(M,R,\eta,\Phi,g)$.

\subsection{Sasakian Structures} 
The contact structures considered in this paper are all of {\it Sasaki type}, that is, there is a contact form $\eta$ and compatible metric $g$ such that $\cals=(R,\eta,\Phi,g)$ is a Sasakian structure on $M$. In this case not only is the cone $C(M)$ discussed above K\"ahler, but the geometry transverse to the characteristic foliation $\calf_R$ is also K\"ahler. This gives rise to a basic cohomology ring $H^*_B(\calf_R)$ (see Section 7.2 of \cite{BG05}), and a transverse Hodge theory. This gives basic Chern classes $c_k(\calf_R)$ which if $(R,\eta,\Phi,g)$ is quasi-regular, are the pullbacks of the orbifold Chern classes $c_k^{orb}(\calz)$ on the base orbifold $\calz$. In particular we are interested in the basic first Chern class $c_1(\calf_R)\in H^{1,1}_B(\calf_R)$. A Sasakian structure $\cals=(\xi,\eta,\Phi,g)$ is said to be {\it positive (negative)} if its basic first Chern class $c_1(\calf_\xi)$ can be represented by a positive (negative) definite $(1,1)$-form. It is {\it null} if $c_1(\calf_\xi)=0$, and {\it indefinite} otherwise. It follows from Lemma 5.1 of \cite{Boy10b} that all Sasakian structures occurring in toric contact structures of Reeb type are either positive or indefinite. 

Recall the {\it transverse homothety} (cf. \cite{BG05}) taking a Sasakian structure $\cals=(\xi,\eta,\Phi,g)$ to the Sasakian structure $$\cals_a=(a^{-1}\xi,a\eta,\Phi,ag+(a^2-a)\eta\otimes \eta)$$
for any $a\in \bbr^+$. Then Theorem 7.5.31 of \cite{BG05} says that if $\cals=(\xi,\eta,\Phi,g)$ is a positive Sasakian structure, that is, $c_1(\calf_\xi)$ can be represented by a positive definite $(1,1)$-form, there exists $a_0\in\bbr^+$ and transverse homotheties $\cals\mapsto \cals_a$ such that the Sasakian metric $g_a$ has positive Ricci curvature for all $a<a_0$.

\subsection{The Sasaki Cone}
Let $\gC\gR(\cald,J)$ denote the group of almost CR transformations of $(\cald,J)$ on $M$. If $M$ is compact, it is a Lie group which is compact except when $(\cald,J)$ is the standard CR structure on the sphere $S^{2n+1}$ by, in various stages, a theorem of Frances, Lee, and Schoen (cf. \cite{Boy10a}). We let $\gc\gr(\cald,J)$ denote the Lie algebra of $\gC\gR(\cald,J)$. Recall \cite{BGS06} that the subset 
$$\gc\gr^+(\cald,J)=\{X\in \gc\gr(\cald,J)~|~\eta(X)>0\}$$
is independent of the choice of $\eta\in\gC^+(\cald)$ and is an open convex cone (without the cone point) in $\gc\gr(\cald,J)$. Now the adjoint action of the group $\gC\gR(\cald,J)$ on its Lie algebra leaves $\gc\gr^+(\cald,J)$ invariant, and the quotient space $$\grk(\cald,J)=\gc\gr^+(\cald,J)/\gC\gR(\cald,J)$$ 
is known as the {\it (reduced) Sasaki cone} of $(\cald,J)$. One should think of $\grk(\cald,J)$ as the {\it moduli space} of K-contact structures associated to the strictly pseudoconvex almost CR structure $(\cald,J)$. In the case that the almost CR structure is integrable, $\grk(\cald,J)$ is the moduli space of Sasakian structures associated to $(\cald,J)$.

It is often convenient to work with the {\it unreduced Sasaki cone} given by choosing a maximal torus $T$ of $\gC\gR(\cald,J)$. Then the unreduced Sasaki cone is $\gt^+(\cald,J)=\gt\cap \gc\gr^+(\cald,J)$ where $\gt$ is the Lie algebra of $T$. The relation between the two Sasaki cones is 
\begin{equation}\label{Sasakicone2}
\grk(\cald,J)= \gt^+(\cald,J)/\calw(\cald,J),
\end{equation}
where $\calw(\cald,J)$ is the Weyl group of $\gC\gR(\cald,J)$. 
$\gt^+(\cald,J)$ is a subspace of the Reeb cone $\calr^+(\cald)$ \cite{Boy10a} which is the subspace of all vector fields on $M$ that is the Reeb vector field of some contact 1-form representing the oriented contact structure $\cald$. 

Of course, many contact structures do not have a Sasaki cone. In fact, a contact structure has a non-empty Sasaki cone if and only if it is of K-contact type. It is important to realize that the Sasaki cone depends on the choice of transverse almost complex structure $J$. Indeed by changing $J$ in a given K-contact structure, we can have more than one Sasaki cone. These occur in bouquets related to the conjugacy classes of maximal tori in the contactomorphism $\gC\go\gn(M,\cald)$ of $(M,\cald)$ \cite{Boy10a,Boy10b}. So a Sasaki bouquet consisting of $N$ Sasaki cones belonging to a contact structure $\cald$ is given by 
\begin{equation}\label{Sasbouq}
\gB_N(\cald)=\cup_{l=1}^N\grk(\cald,J_l).
\end{equation}

\subsection{The Moment Cone} Now let $T$ be a torus subgroup of $\gC\go\gn(M,\cald)$, and let $\gt$ be its Lie algebra. Consider the annihilator $\cald^o$ of $\cald$ which is a trivial real line bundle over $M$. The orientation on $\cald$ allows us to write $\cald^o\setminus \{0\}=\cald_+^o\cup \cald_-^o$, and we can identify $\cald_+^o\approx M\times \bbr^+=C(M)$. Then the contact moment map $\Upsilon:\cald_+^o\ra{1.7} \gt^*$ is defined by 
\begin{equation}\label{mommap}
<\Upsilon(x,p),\grt_x>=<p,\grt_x> 
\end{equation}
where $\grt\in\gt$. 
The {\it moment cone} $C(\Upsilon)$ is defined \cite{Ler02a} as the union of the image set with the cone point, i.e.
\begin{equation}\label{momcone}
C(\Upsilon)=\Upsilon(\cald_+^o)\cup \{0\}.
\end{equation}
This moment map satisfies the following invariance condition, $$\Upsilon(\phi\cdot x,\phi^*p)={\rm Ad}^*_\phi\Upsilon(x,p)=\Upsilon(x,p)$$ 
where $\phi\in T$ and $\phi^*$ denotes the induced action of $T$ on $T^*M$ restricted to $\cald_+^o$. 

By averaging over $T$ we can choose a $T$-invariant contact form $\eta$ which gives an equivariant moment map $\mu_\eta:M\ra{1.7} \gt^*$ satisfying 
\begin{equation}\label{etamommap}
\mu_\eta=\Upsilon\circ \eta.
\end{equation}
Again by averaging we can choose an almost complex structure $J$ that is $T$-invariant, so $\gt$ is an Abelian subalgebra of $\gc\gr(\cald,J)$. Furthermore, the contact form $\eta$ is K-contact (with respect to $J$) if and only if its Reeb vector field $R_\eta$ lies in the Lie algebra $\gt$. In this case we also say that the torus action is of {\it Reeb type} \cite{BG00b}. It is easy to see that this is equivalent to the existence of an element $\grt\in \gt$ such that $\eta(\grt)$ is strictly positive on $M$. When the contact structure $\cald$ is of Reeb type $C(\Upsilon)$ is a convex rational polyhedral cone \cite{Ler02a}.  We have

\begin{lemma}\label{hyperplane}
A $T$-invariant contact form $\eta$ is K-contact if and only if the image $\mu_{\eta}(M)$ lies in the intersection of a hyperplane $H_\eta$ with the moment cone $C(\Upsilon)$. Moreover, in the K-contact case the intersection $P_\eta=H_\eta\cap C(\Upsilon)$ is a simple convex polytope which is rational if and only if $\eta$ is quasi-regular.
\end{lemma}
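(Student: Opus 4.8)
The plan is to work directly from the defining properties of the contact moment map $\mu_\eta$ and the Reeb vector field $R_\eta$. First I would recall the key identity relating $\mu_\eta$ to the Reeb vector field: for any $\grt\in\gt$, evaluating the moment map pairing against $\grt$ gives $\langle\mu_\eta(x),\grt\rangle=\eta_x(\grt_x)$, which is just the combination of \eqref{mommap} and \eqref{etamommap}. The crucial observation is then that $\eta$ is K-contact precisely when its Reeb vector field $R_\eta$ lies in $\gt$ (this is stated in the paragraph preceding the lemma, for a torus action of Reeb type). So suppose $R_\eta\in\gt$. Then for every $x\in M$ we have $\langle\mu_\eta(x),R_\eta\rangle=\eta_x((R_\eta)_x)=1$ by the defining property $\eta(R)=1$ of the Reeb field. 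Hence $\mu_\eta(M)$ is contained in the affine hyperplane $H_\eta=\{y\in\gt^*\mid\langle y,R_\eta\rangle=1\}$, and therefore in $H_\eta\cap C(\Upsilon)$ since $\mu_\eta(M)\subset\Upsilon(\cald_+^o)\subset C(\Upsilon)$ by \eqref{momcone}. Conversely, if $\mu_\eta(M)$ lies in some hyperplane $H_\eta$, then $H_\eta$ is cut out by $\langle\,\cdot\,,\grt\rangle=c$ for some nonzero $\grt\in\gt$ and constant $c$; I would argue $c\neq 0$ (otherwise $\mu_\eta(M)$, together with the cone point, spans a proper subspace of $\gt^*$, contradicting that the $T$-action is effective / the moment cone is full-dimensional), so after rescaling $\langle\mu_\eta(x),\grt\rangle\equiv 1$, i.e. $\eta_x(\grt_x)\equiv 1$ on $M$. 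This says $\grt$ is a nowhere-zero section generating a vector field with $\eta(\grt)\equiv 1$; combined with $T$-invariance of $\eta$ (so $\pounds_\grt\eta=0$, giving $\grt\hook d\eta = -d(\eta(\grt))=0$), this forces $\grt=R_\eta$, hence $R_\eta\in\gt$ and $\eta$ is K-contact.

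For the second assertion, assume $\eta$ is K-contact and set $P_\eta=H_\eta\cap C(\Upsilon)$. Since $\cald$ is of Reeb type, $C(\Upsilon)$ is a convex rational polyhedral cone by \cite{Ler02a}; intersecting a polyhedral cone with an affine hyperplane that meets its interior (which $H_\eta$ does, since $R_\eta$ is strictly positive, so $H_\eta$ is transverse to every ray of $C(\Upsilon)$) yields a compact convex polytope. Compactness follows because $R_\eta$ being strictly positive on the cone means the linear functional $\langle\,\cdot\,,R_\eta\rangle$ is bounded below by a positive multiple of any norm on $C(\Upsilon)$, so the slice at level $1$ is bounded. Simplicity of $P_\eta$ is the translation of the smoothness of the contact manifold: at each vertex (equivalently, each edge ray of $C(\Upsilon)$, which corresponds to a circle subgroup with nonempty fixed locus) the local normal form for a toric contact manifold forces exactly $n$ facets of $C(\Upsilon)$ to meet, hence exactly $n$ facets of $P_\eta$ meet at each vertex — this is part of the structure theory in \cite{Ler02a}. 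Finally, $P_\eta$ is rational (its vertices lie in the rational lattice of $\gt^*$) if and only if the slicing hyperplane $H_\eta$ is rational, and $H_\eta=\{\langle\,\cdot\,,R_\eta\rangle=1\}$ is rational exactly when $R_\eta$ is a rational vector in $\gt$; by the standard dictionary between rationality of the Reeb field and closed-ness of its flow, this holds if and only if $\calf_{R_\eta}$ is quasi-regular, i.e. $\eta$ is quasi-regular.

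The step I expect to be the main obstacle is the converse direction of the first assertion — namely extracting, from the mere existence of \emph{some} hyperplane containing $\mu_\eta(M)$, the identification of its defining covector with the Reeb field, and in particular ruling out the degenerate case $c=0$. This requires invoking effectiveness of the torus action (so that $\mu_\eta$ is not constrained to a proper subspace) together with the $T$-invariance of $\eta$ to pin down $\grt=R_\eta$ via $\grt\hook d\eta=0$ and $\eta(\grt)=1$. The remaining pieces — the hyperplane containment in the forward direction, and compactness/simplicity/rationality of $P_\eta$ — are either immediate from $\eta(R)=1$ or are direct citations of the polyhedral structure theory in \cite{Ler02a}.
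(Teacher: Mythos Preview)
Your proof is correct and shares the same core identity $\langle\mu_\eta,R_\eta\rangle=\eta(R_\eta)=1$ with the paper, but the framing differs. The paper's argument extends $\mu_\eta$ to a moment map into the dual of the infinite-dimensional Lie algebra $\gc\go\gn(\eta)$, where the equation $\langle\mu_\eta,R_\eta\rangle=1$ always cuts out a plane; the ``if and only if'' is then the statement that this plane lies in the finite-dimensional subspace $\gt^*$ precisely when $R_\eta\in\gt$. You instead stay entirely in $\gt^*$ and, for the converse, explicitly identify the normal covector of an arbitrary containing hyperplane with $R_\eta$ by using $T$-invariance to get $\grt\hook d\eta=0$ and the normalization $\eta(\grt)=1$. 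Your route is more elementary and makes the converse more transparent (in particular you isolate and dispose of the $c=0$ degeneration via effectiveness, a point the paper's proof glosses over); the paper's route is more conceptual in that the hyperplane is seen as always present, with only its ambient location in question. For the polytope statements both arguments appeal to the polyhedral structure of the moment cone and the symplectic dictionary for rationality, so there is no substantive difference there.
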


\begin{proof}
For any contact form $\eta$ with Reeb vector field $R_\eta$ we can extend the moment map (\ref{etamommap}) to a moment map $\mu_\eta :M\ra{1.6} \gc\go\gn(\eta)^*$ associated to the infinite dimensional Lie algebra $\gc\go\gn(\eta)$ of vector fields leaving $\eta$ invariant. The equation
$$\langle\mu_\eta,R_\eta\rangle=\eta(R_\eta)=1$$ 
describes a plane in the infinite dimensional vector space $\gc\go\gn(\eta)^*$.
This plane lies in the subspace $\gt^*$ if and only if $R_\eta$ lies in $\gt$ and this holds if and only if the contact form $\eta$ is K-contact. In this case $R_\eta$ lies in the Sasaki cone $\gt^+(\cald,J)$ which is dual to the interior of the  moment cone. So the plane in $\gt^*$ is a hyperplane.

The positivity of $\gt^+(\cald,J)$ implies that $P_\eta$ is a convex polytope, and the rationality condition follows as in the symplectic case \cite{BG00b}.
\end{proof}

The hyperplane $H_\eta$ is called the {\it characteristic hyperplane}.

\subsection{Equivalence of Contact Structures and Invariants}
We say that two contact manifolds $(M,\cald)$ and $(M',\cald')$ are {\it contactomorphic} or {\it equivalent contact manifolds} if there exists a diffeomorphism $\varphi: M\ra{1.6} M'$ such that $\varphi_*\cald=\cald'$. Alternatively, one can say that for any contact form $\eta'$ representing $\cald'$ the 1-form $\varphi^*\eta'$ is a contact form representing $\cald$. From Gray's Theorem we know that there is no local deformation theory for contact structures.

A contact structure on $M^{2n+1}$ has an underlying {\it almost contact structure} which can be defined as a reduction of the frame bundle to the group $U(n)\times 1$. The homotopy class of almost contact structures is an invariant of the contact structure, and the set $A(M)$ of such homotopy classes is in one-to-one correspondence with the set of homotopy classes of almost complex structures on the cone $C(M)$. The Chern classes of the contact bundle are invariants of the contact structure; in particular, the first Chern class $c_1(\cald)$ will play an important role for us.

Another crude way of distinguishing contact structures is by its type. We say that a contact structure $\cald$ on $M$ is of {\it K-contact type} if there is a 1-form $\eta$ in the contact structure and a choice of almost complex structure $J$ such that $\pounds_R\Phi=0$. We also say that the 1-form $\eta$ is {\it K-contact}. If in addition the almost complex structure $J$ is integrable, the 1-form is a {\it Sasakian} contact form, and the contact structure $\cald$ is said to be of {\it Sasaki type}. We mention that contact structures of K-contact type are symplectically fillable \cite{NiPa07}, while those of Sasaki type are holomorphically fillable \cite{MaYe07}. All contact structures discussed in this paper are of Sasaki type.

Let $(M,\cald)$ and $(M',\cald')$ be compact contact manifolds of K-contact type, then there exist quasi-regular 1-forms $\eta$ and $\eta'$, representing $\cald$ and $\cald'$, respectively such that $(M,\cald)$ and $(M',\cald')$ are orbifold fibrations over symplectic orbifolds $(\calz,\gro)$ and $(\calz',\gro')$, respectively with projection maps $\pi$ and $\pi'$ satisfying $d\eta=\pi^*\gro$ and $d\eta'={\pi'}^*\gro'$. So if $\tilde{\varphi}:(\calz,\gro)\ra{1.8}(\calz',\gro')$ is a symplectomorphism of symplectic orbifolds, we can lift it to an orbibundle map $\varphi:M\ra{1.6} M'$ such that $d\eta=\varphi^*d\eta'$. Since $\varphi$ maps fibers to fibers we can choose it so that $\varphi_*R=R'$. So the symplectomorphism $\tilde{\varphi}$ lifts to a contactomorphism which sends the Reeb vector field of $\eta$ to that of $\eta'$.

Another important invariant of a contact structure is its contactomorphism group $\gC\go\gn(M^{2n+1},\cald)$ defined as the subgroup of the diffeomorphism group $\gD\gi\gf\gf(M)$ that leaves the contact bundle $\cald$ invariant. An invariant within $\gC\go\gn(M^{2n+1},\cald)$ itself is the number $\gn(\cald,n+1)$ of conjugacy classes of $(n+1)$-tori.

Finally, a powerful invariant for distinguishing contact structures is contact homology \cite{ElGiHo00} which we now describe in some detail.

\section{Contact Homology and Pseudoholomorphic Curves}
Here we give an exposition of pseudoholomorphic curve theory as it relates to the Morse-Bott formulation of contact homology.

\subsection{J-holomorphic Curves in Symplectizations}
The study of pseudoholomorphic curves in symplectic manifolds was initiated by Gromov in his seminal paper ~\cite{Gro85}.  Since then, these object have become a basic tool in understanding symplectic geometry and topology. We begin with a definition.  Let $\Sigma$ be a Riemann surface with complex structure $j.$

\begin{definition} 
 A $C^{\infty}$ map, $u$, into the almost complex manifold $(N, J)$ is called $J$-holomorphic if
\begin{equation}
 du + J(u)d(u \circ j) = 0.
\end{equation}
\end{definition}
In this paper we are interested in almost complex structures, $J$,which are \emph{compatible} with the symplectic form.  
Compatibility means that $\omega(v, Jw)$ defines a Riemannian metric on $M.$
In the special case that the symplectic manifold is the symplectization of a contact manifold we are interested
in special almost complex structures and their associated pseudoholomorphic curves.  These are constructed 
in the following way.  Choose a transverse almost complex structure $J_0$ on $\mathcal{D}.$  Then choose
a contact form. Extend $J_0$ to an almost complex structure on the symplectization by declaring  
$$ \label{goodJ} J \frac{\partial}{\partial s} = R_{\eta}$$ 
i.e., by declaring the Reeb vector field $\eta$ to be the imaginary part of the complex line bundle spanned by the $R_{\eta}$ 
and the real direction $\frac{\d}{\d s}$.  Notice, now, that cylinders over periodic Reeb orbits are $J$-holomorphic,
as are their branched covers.  In the rest of the paper, unless otherwise mentioned, we will abbreviate pseudoholomorphic
with the word holomorphic.  In situations where classical holomorphicity comes up it shall be made clear.   

\subsection{Morse-Bott Contact Homology}

Contact homology is a small part of the larger symplectic field theory of Eliashberg, Givental, and Hofer ~\cite{ElGiHo00}.  
The original idea, inspired by Floer homology, was to create a homology theory from the chain complex generated by closed orbits of the 
Reeb vector field.  Given a contact manifold $(M, \mathcal{D})$, we choose a contact form, $\eta$ for $\mathcal{D},$ 
and an almost complex structure $J$ on the symplectization of $M$ which extends the almost complex structure on 
$\mathcal{D}$ such that the Reeb vector field is the purely imaginary direction. Now consider the contact version 
of the symplectic action functional $\mathcal{A}: C^{\infty}(S^1; M) \to \bb{R},$ defined by
\begin{equation} 
\cala(\gamma) = \int_{\gamma} \eta.\label{eq:actfun}
\end{equation}
The critical points of $\mathcal{A}$ are closed orbits of the Reeb vector field of $\eta,$ and the gradient trajectories, when considered 
as living in the symplectization of $M$, are $J$-holomorphic curves from a twice punctured 2-sphere into the symplectization which are 
\emph{asymptotically cylindrical} over closed Reeb orbits, i.e., they are curves for which there exist polar coordinates about each puncture, 
such that for the radius sufficiently small the curve behaves like a cylinder over a closed Reeb orbit.  
Notice also that the value of $\mathcal{A}$, on $\gamma$ is the period of $\gamma$ ~\cite{ElGiHo00} ~\cite{HoWyZe96}. 

We would like to think of this action functional as a Morse function and then construct a Morse-Smale-Witten complex, which should then 
give us information about $M$, $\mathcal{D}$, and the loop space of $M.$  The naive idea does not exactly work, since this action functional
has infinite dimensional kernel and cokernel, so we must search further for a reasonable version of the Morse index.  
For this we use the Conley-Zehnder index, or more generally the Robbin-Salamon index ~\cite{RoSa93}.
The Robbin-Salamon index associates to each path of symplectic matrices a half integer, it is a generalization of 
the Conley-Zehnder index to a more general class of paths of symplectic matrices. This particular definition originally
 appeared in ~\cite{RoSa93}.  This index determines the grading for the chain complex in contact homology.  The 
Robbin-Salamon index should be thought of  as analogous to the Morse index for a Morse function.  The analogy is not perfect, since 
the actual Morse theory we consider should give information about the loop space of the contact manifold.

\begin{remark} The Maslov index can be understood as an  invariant of loops of Lagrangian subspaces in the Grassmanian of Lagrangian 
subspaces of a symplectic vector space V.  In this setting the Maslov index is the intersection number of a path of Lagrangian subspaces 
with a certain algebraic variety called the Maslov cycle.  This is of course related to the Robbin-Salamon and Conley-Zehnder indices of 
a path of symplectic matrices, since we can consider a path of Lagrangian subspaces given by the path of graphs of the desired path of 
symplectic matrices. For more information on this see ~\cite{McDSa}.
\end{remark}

For a symplectic vector bundle $E$ over a Riemann surface $\Sigma$, there is symplectic definition of the first Chern number 
$\langle c_{1}(E),\Sigma \rangle$.  It turns out that this Chern number is the loop Maslov index of a certain loop of symplectic matrices, 
obtained from local trivializations of $\Sigma$ decomposed along a curve $\gamma \subset \Sigma$.  This Chern number agrees with the usual 
definition, considering $E$ as a complex vector bundle which can be obtained via a curvature calculation. Let us consider a symplectic vector space $(V, \omega)$, and let $\Phi(t)$, $t \in[0, T]$ be a path of symplectic 
matrices defined on $V$ starting at the identity such that $det(I - \Phi(T)) \neq 0$.
We call a number $t \in [0,T]$, a \emph{crossing} if $det(\Phi(t) - I)=0.$  
For each crossing we define the \emph{crossing form} 
$$\Gamma(t)v = \omega(v, D \dot{\Phi}(t)v),$$ 
where $\omega$ is the standard symplectic form on $\bb{R}^{2n}.$
A crossing is called \emph{regular} if 
the crossing form is non-degenerate.  One can always homotope a path of symplectic matrices to one with regular 
crossings.

\begin{defn}
 The Conley-Zehnder index of the path $\Phi(t)$ under the above assumptions is given by:
$$\mu_{CZ}(\Phi) = \frac{1}{2}sign(\Gamma(0)) + \sum_{t\neq 0\,, \,t\, a\,\, crossing}sign(\Gamma(t))$$
\end{defn}
The Conley-Zehnder index satisfies the following axioms: 
\begin{enumerate}
\item [i.] (\textbf{Homotopy}) $\mu_{CZ}$ is invariant under homotopies which fix endpoints.
\item [ii.] (\textbf{Naturality}) $\mu_{CZ}$ is invariant under conjugation by paths in $Sp(n, \bb{R}).$
\item [iii.] (\textbf{Loop}) For any path, $\psi$ in $Sp(n, \bb{R}),$ 
and a loop $\phi$, $$\mu_{CZ}(\psi \cdot \phi) = \mu_{CZ}(\psi) + \mu_{l}(\phi),$$  where $\mu_{l}$ is the Maslov index for loops of 
symplectic matrices.
\item [iv.] (\textbf{Direct Sum})  If $n = n' + n''$ and $\psi_{1}$ is a path in $Sp(n', \bb{R})$ 
and $\psi_{2}$ is a path in $Sp(n'', \bb{R})$ 
then for the path $\psi_{1} \oplus \psi_{2} \in Sp(n', \bb{R}) \bigoplus Sp(n'', \bb{R}),$ we have 
$$\mu(\psi_{1} \oplus \psi_{2}) = \mu(\psi_{1}) + \mu(\psi_{2}).$$
\item [v.] (\textbf{Zero}) If a path has no eigenvalues on $S^1$, 
then its Conley-Zehnder index is 0.
\item [vi.] (\textbf{Signature})  Let $S$ be symmetric and nondegenerate with $||S|| < 2\pi$, and  let $\psi(t) = exp(JSt)$, then 
$$\mu_{CZ}(\psi) = \frac{1}{2}sign(S).$$  
\end{enumerate}
The Conley-Zehnder index is still insufficient for our purposes since we need the assumption that at time $T=1$ the symplectic matrix has no 
eigenvalue equal to 1.  We introduce yet another index for arbitrary paths.  We will call this index the Robbin-Salamon 
index and denote it $\mu_{RS}.$  

For this new index we simply add half of the signature of the crossing form at the terminal time of the path to the formula for the 
Conley-Zehnder index.
$$ \mu_{RS}(\Phi(t)) = \frac{1}{2}sign(\Gamma(0)) + \sum_{t\neq 0\,, \,t\, a\,\, crossing}sign(\Gamma(t)) + \frac{1}{2}sign(\Gamma(T))$$
This index satisfies the same axioms as $\mu_{CZ}$ as well as the new property of catenation.  
This means that the index of the catenation of paths is the sum of the indices.
\begin{enumerate}
\item[vii.] (\textbf{Catenation axiom}) Suppose that $\Phi_1, \Phi_2$ are two paths of symplectic matrices which satisfy 
$\Phi_1(T)= \Phi_2(0)$.  Then the new path $\Psi$ defined by concatenation of $\Phi_1$ with $\Phi_2$ has index $\mu(\Phi_1) + \mu(\Phi_2).$
\end{enumerate}  

Both the Conley-Zehnder and Robbin-Salamon indices arise from the Maslov index for a path of symplectic matrices as follows: first, let us assume that $H_1(M,\bbz)=0$ and consider a closed Reeb orbit $\grg$ together with an embedded Riemann surface $\Sigma \subset M$ such that $\partial \Sigma= \gamma.$ To find the relevant path of symplectic matrices with which to compute the Maslov 
index, one then pulls back the contact bundle $\cald$ to $\grS$, which then admits a trivialization, since it is a symplectic vector bundle over a Riemann surface with boundary. Then one considers the linearized Reeb flow about a Reeb orbit.  This linearized flow gives the desired path of symplectic matrices. As we shall see later when $M$ is the total space of an $S^1$ orbibundle over a simply connected symplectic orbifold, these indices are all essentially Chern numbers obtained by evaluating Chern classes of the orbifold strata on homology classes in $\calz$. It is important to understand that in a contact manifold, these indices depend on the choice of capping disk used to trivialize $\mathcal{D}.$ 
In particular if the closed Reeb orbit $\grg$ is contractible (which is always the case in this article), one trivializes $\cald$ by choosing a capping disk $\Sigma$ of $\grg$. If we consider another capping surface of the form $\Sigma^{'}=\Sigma \# S_A$ where $S_A$  represents a two dimensional homology class $A$ in $M$, then the Conley-Zehnder index of the orbit computed with $\Sigma^{'}$ will differ from that computed using $\grS$ 
by twice the first Chern class of $\mathcal{D}$ evaluated on $A$, namely
\begin{equation}\label{capdisk}
\mu_{CZ}(\grg;\grS_\grg\# S_A)= \mu_{CZ}(\grg;\grS_\grg) +2\langle c_1(\cald),A\rangle.
\end{equation}
Thus, the grading depends on the choice of trivialization. In order to address this dependence one considers the coefficients to be elements in the  Novikov ring. Give $H_2(M,\bbz)$ a grading $|\cdot|$  by setting $|A|= -2\langle c_1(\cald),A\rangle$ for any $A\in H_2(M,\bbz)$. Let $\calr$ be a submodule of $H_2(M,\bbz)$ with zero grading. Then the Novikov ring is the graded group ring $\bbq[H_2(M,\bbz)/\calr]$ whose element are formal power series of the form $\sum_iq_ie^{A_i}$ where $q_i\in \bbq$ and $A_i\in H_2(M,\bbz)/\calr$. Here as usual the notation $e^A$ is used to encode the multiplicative structure of a commutative ring with unit (cf. Chapter 11 of \cite{McSa04}).

Now that we have a grading we can define a graded chain complex $C_{*}$ generated by certain closed Reeb orbits with coefficients in the ring $\bbq[H_2(M,\bbz)/\calr]$. The grading in this chain complex is given 
by the Conley-Zehnder index shifted for convenience by $n-2$ and denoted by $|\grg|$. There some Reeb orbits for which the moduli space of 
holomorphic curves in $C(M)$ cannot be given a coherent orientation \cite{BoMo04}, so these `bad' Reeb orbits must be discarded. 
Let $\grg$ be a Reeb orbit with minimal period $T$, and $\grg_m$ be a Reeb orbit that covers $\grg$ with multiplicity $m$, so the period of $\grg_m$ 
is $mT$. The {\it bad} orbits are those for which the parity of the even multiples $|\grg_{2m}|$ disagrees with the parity for the odd multiples $|\grg_{2m-1}|$. A Reeb orbit that is not bad is said to be {\it good}.

\begin{definition}\label{chaincomp}
We define $C_*$ to be the graded chain complex freely generated by all good closed Reeb orbits with coefficients in the Novikov ring $\bbq[H_2(M,\bbz)/\calr]$.
\end{definition}

The differential $\d$ of this chain complex is given by an algebraic count 
of pseudoholomorphic curves in the symplectization $C(M)$ of $M$ which come in one dimensional families.  Explicitly, for $\gamma$ a good closed orbit of the Reeb vector field, $M$ simply connected, and $A$ a two dimensional homology class,  
the differential is given by the formula
\begin{equation}\label{hcdiff}
\partial \gamma =\sum_{A \in H_2(M, \bb{Z})} \sum_{\gamma'} \frac{1}{\kappa_{\gamma}} n_{\gamma, \gamma', A}e^{\bA}\grg',
\end{equation}
where $\bA$ denotes the image in $H_2(M,\bbz)/\calr$ of the homology class $A$, $\kappa_{\gamma}$ is the multiplicity of the Reeb orbit $\gamma$, $n_{\gamma, \gamma'}$ is the  algebraic 
count of elements in the moduli space $\oldmathcal{M}^A(\gamma, \gamma')$ of $J$-holomorphic curves into the symplectization of $M$ which are
asymptotically cylindrical over the closed Reeb orbits $\gamma$, $\gamma'$ representing the homology class $A.$ 
Note that $n_{\gamma, \gamma', A}$ is non-zero only if the dimension of this moduli space is $1.$ 
Unfortunately,  this does not always work, due to the lack of compactness of moduli spaces of pseudoholomorphic cylinders since, indeed the boundary of the 
compactification of this space can, in general, contain curves with more than two punctures. 
However, we can instead consider the supercommutative algebra generated by periodic orbits.  This means that instead of counting only cylinders, we now count
curves with an arbitrary number of negative punctures.      
This indeed gives a reasonable homology. The proofs that $\partial^2 =0$ and that the homology does not depend on 
choices of a contact form or an almost complex structure come from analysis of the boundary of moduli spaces of rigid curves and are discussed in ~\cite{ElGiHo00}.  These results depend on abstract transversality results for the $\delbar_J$ operator.  We will make the standing assumption that such transversality can be achieved, either by abstract perturbations or by the amenable geometry of the situation at hand.  The signs which appear in the algebraic count depend on coherent orientations of the moduli space are explained in ~\cite{BoMo04}. 
With this said we have
\begin{definition}\label{conhom}
The contact homology, denoted $HC$, is the homology of the complex $C_*$ of Definition \ref{chaincomp} with differential given by Equation (\ref{hcdiff}).
\end{definition}

In the above construction we need to make an assumption that the critical points of the action functional (\ref{eq:actfun}) are isolated in order to get a good index, 
i.e., we have to assume that that the Poincar\'{e} return map constructed about any periodic Reeb orbit has no eigenvalue equal to $1.$  This condition is generic; however, many natural contact forms, especially those which arise from
circle orbibundles are as far from generic as possible.  In order to calculate contact homology for such manifolds one must make
some sort of perturbation.  It is only in very nice situations that this is not extremely difficult.  The Morse-Bott version \cite{Bou02t,Bou03,ElGiHo00} allows us to use the symmetries of nice contact structures and symmetric almost complex structures, by rather than excluding 
non-isolated orbits, exploiting them.  This is accomplished by considering Morse theory on the quotient space, and relating critical points, and 
gradient trajectories of a Morse function to pseudoholomorphic curves in the symplectization of the contact manifold.  Since toric contact 
manifolds of Reeb type are always total spaces of circle orbibundles admitting Hamiltonian actions of tori and they admit nice Morse functions, the Morse-Bott formalism works quite well for us.  We follow a combination of ~\cite{ElGiHo00} and ~\cite{Bou02t} in what follows
applying the Morse-Bott set-up to our special case. 

\begin{defn} Let $(M, \mathcal{D})$ be a contact manifold with contact form $\eta.$  The \textbf{action spectrum}, 
$$\sigma(\eta) = \{r \in \bb{R} | r = \mathcal{A}(\gamma)\}$$ for $\gamma$ a periodic orbit of the Reeb vector field.
 
\end{defn}
\begin{defn} Let $T \in \sigma(\eta).$  Let $$N_{T} = \{ p \in M | \phi_{p}^{T} = p\},$$ $$S_{T} = N_{T}/S^{1},$$ where 
$S^1$ acts on $M$ via the Reeb flow.  Then $S_{T}$ is called the \textbf{orbit space} for period $T$.
 \end{defn}
When $M$ is the total space of an $S^1$-orbibundle the orbit spaces are precisely the orbifold strata. Let us give the definition that our contact form is of Morse-Bott type: 
\begin{defn}
A contact form, $\eta$ is said to be of \textbf{Morse-Bott type} if
\begin{enumerate}
\item[i.] The action spectrum 
$\sigma(\eta)$ is discrete.
\item[ii.] The sets $N_{T}$ are closed submanifolds of M, such that the rank of $d\eta|_{N_{T}}$ is locally constant and 
$$T_{p}(N_{T}) = ker(d\phi_{T} - I).$$
\end{enumerate}
\end{defn}

\begin{remark}These conditions are the Morse-Bott analogues for the functional on the loop space of $M.$
\end{remark}
Notice that in the case of $S^1$ orbibundles this is always satisfied.

Rather than set up Morse-Bott contact homology in full generality, let us do this for the special case of an $S^1$-orbibundle.  In this case
the contact form is of Morse-Bott type \cite{Bou02t}.  Let $T_1, \ldots, T_m$
be all possible simple periods for closed Reeb orbits.  Let $\phi_x^{t}$ denote the flow of the Reeb vector field. Let 
$$N_{T_j} = \{x \in M | \phi_x^{T_j} = x \},\qquad S_{T_j} = N_{T_j}/{S^1}.$$ 
For each $j$, we choose a Morse function $f_j$ on $S_{T_j}$ and, using appropriate bump functions build a Morse function, $f$ on all of 
$M$ which descends under the quotient by the Reeb action to each orbit space.
Now, we perturb $\eta$ by
\begin{equation} 
\eta_f = (1+ \epsilon f) \eta.  \label{eq:mbform}
 \end{equation}
For almost all $\epsilon$, the closed Reeb orbits of $\eta_f$  are isolated, and, for bounded action, they correspond to critical points of $f.$
Note that the Reeb orbits of $\eta$ within each stratum all have the same Robbin-Salamon index.  The following formula ~\cite{Bou02t,CiFlHoWy96} 
computes the Conley-Zehnder index of $\eta_f$ in terms of the Robbin-Salamon index of any Reeb orbit in a particular orbit space,
\begin{equation}
|\gamma|= \mu_{CZ}(\gamma) = \mu_{RS}(\gamma_T) - \frac{1}{2} dim (S_{T_j}) + ind_p(f_j).  \label{eq:gengrad}
\end{equation}
It is a theorem of Bourgeois ~\cite{Bou02t}, that we can compute the contact homology of this complex using so-called generalized holomorphic
curves, where we cannot determine exactly the asymptotics, only into which orbit space a particular curve intersects.  Using this it can be 
shown, up to transversality that we can compute this contact homology via the chain complex generated equivalently by closed Reeb orbits, as above or by 
the critical points of a Morse function on each stratum.  Then we define $HC_{*}(M, \mathcal{D})$ to be the homology of this chain complex with grading given by 
(\ref{eq:gengrad}), and the differential given by counting rigid generalized holomorphic curves, see ~\cite{Bou02t}.  Under suitable conditions $\partial ^2=0,$ 
and then this gives a well defined differential graded algebra.  Sometimes there are obstructions to invariance and $\partial^2 =0$, when this happens we must consider 
the graded \emph{algebra} generated by the above orbits or critical points, and instead of counting cylinders we count rigid genus $0$
curves with one positive puncture and arbitrarily many negative punctures.
We now have a contact form with isolated closed Reeb orbits.  Since these orbits correspond to critical points of a Morse function, we may think of
the generators of contact homology either as isolated orbits, or as critical points of a Morse function on each orbit space.

Let us now consider the Morse-Bott complex.  Since the closed Reeb orbits of action less than some $T_0$ all correspond to critical points of our Morse function, we 
consider the differential graded algebra with coefficients in $H_2(M_\bfp,\bb{Z})$ over the critical points of $|\mu_2|^2$ on each stratum of the orbit space.  This
is equivalent to considering the complex of Reeb orbits for a perturbed generic contact form with the same coefficient ring.  For any Reeb orbit in $S_T$ the grading is given by Equation (\ref{eq:gengrad}) with $f_j=|\mu_2|^2_T$ where $|\mu_2|^2_T$ denotes the restriction of $|\mu_2|^2$ to $S_T$. The coefficient ring encodes information about the homology classes of holomorphic 
curves which appear in the differential.  Multiplication by a ring element, $A$ decreases the grading by $2\langle c_1(\mathcal{D}_\bfp), A \rangle.$
Here one must use caution when computing the Robbin-Salamon index because $c_1(\mathcal{D})\neq 0.$ Since the computation depends explicitly on the choice of disk $\Sigma$ used to trivialize $\mathcal{D}$, we must address this dependence.

As we shall see later the index depends on certain spherical homology classes in $H_2(\calz, \bb{Z}).$  One possibility is to label Reeb orbits with 
the homology class corresponding to the choice of disk in $M$.  This amounts to labeling Reeb orbits with a subscript as in $\gamma_A$ or by considering
pairs $(\gamma, A).$  Some authors consider the Robbin-Salamon and Conley-Zehnder indices as taking \emph{two} arguments, namely a Reeb orbit
and a Riemann surface or homology class.  We will often suppress this notation in the sequel.   

Now we discuss further the relationship between holomorphic curves and Reeb orbits, which is, of course, of fundamental importance to the task at hand.  The proof of the following proposition appears in ~\cite{BEHWZ}.

\begin{prop} \label{ReebAsymp} 
Let $J$ be an almost complex structure on the symplectization of a contact manifold.  Suppose further
that the contact form is of Morse­-Bott type.  Let $u$ be a $J$-holomorphic curve with finite non-zero
energy. Then there exists a time greater than 0 and a periodic orbit $\gamma$ of the Reeb vector field such that $u$ is asymptotically cylindrical over $\gamma.$  Moreover the convergence is exponential.  
\end{prop}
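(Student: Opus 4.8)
The plan is to reduce to the local model near a single puncture of $u$ and run the Hofer--Wysocki--Zehnder asymptotic analysis, together with Bourgeois' Morse--Bott refinement; the complete argument is the one carried out in \cite{BEHWZ}, so I would only indicate the structure. First I would work near one puncture, which after a biholomorphism becomes the positive half-cylinder $Z_+=[0,\infty)\times S^1$ with coordinates $(s,t)$, and write $u=(a,f):Z_+\to\bbr\times M$. Since $u$ has finite Hofer energy, the $d\eta$-energy of $u$ over $[s,\infty)\times S^1$ is finite and the \emph{asymptotic action} $T:=\lim_{s\to\infty}\int_{\{s\}\times S^1}f^*\eta$ exists and is $\geq 0$. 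I would first dispose of the case $T=0$: then the contact area near the puncture vanishes and the removal-of-singularities theorem (with the monotonicity lemma) shows $u$ extends smoothly across the puncture, so this puncture is not a genuine one. Since $\grO=d(r^2\eta)$ is exact, a curve all of whose punctures were removable would extend to a closed $J$-holomorphic curve in the exact cone and hence be constant; finite \emph{non-zero} energy therefore forces at least one puncture with $T>0$, which is why a Reeb orbit must appear, and the remaining work is at such a puncture.

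So assume $T>0$. The next step is a uniform gradient bound $\sup_{Z_+}|\nabla u|<\infty$. I would argue by the usual bubbling dichotomy: if $|\nabla u(z_k)|\to\infty$ with $z_k\to\infty$, rescaling yields in the limit a nonconstant finite-energy plane or sphere, and this is played off against the finiteness of the total energy together with the ``long cylinders carry little $d\eta$-energy'' estimate to reach a contradiction (here one also uses that $T>0$ forces $a$ to be unbounded, so no energy leaks in the other direction). With $|\nabla u|$ bounded, elliptic estimates on unit sub-cylinders give uniform $C^k$ bounds, and for any sequence $s_k\to\infty$ the translates $(s,t)\mapsto(a(s+s_k,t)-a(s_k,0),f(s+s_k,t))$ subconverge in $C^\infty_{loc}$ to a finite-energy $J$-holomorphic cylinder; energy and area bookkeeping forces this limit to be the trivial cylinder $s\mapsto(Ts,\gamma(Tt))$ over a closed Reeb orbit $\gamma$ of period $T$, with $T\in\sigma(\eta)$, which is discrete by the Morse--Bott hypothesis. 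Hence $f(s,\cdot)$ comes $C^1$-close to the closed submanifold $N_T$ for large $s$, and a $C^0$ ``no-return''/connectedness argument upgrades this to genuine convergence of the loops $f(s,\cdot)$, as $s\to\infty$, to \emph{one} reparametrized orbit lying in $N_T$.

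Exponential convergence I would then get by linearization. Near the trivial cylinder write $f(s,\cdot)=\exp_{\gamma}(\zeta(s,\cdot))$ with $\zeta(s)\to0$ in $W^{1,2}(S^1)$; the Cauchy--Riemann equation takes the form $\partial_s\zeta+A\zeta=r(s,\zeta)$, where $A=A_\gamma$ is the asymptotic operator, a self-adjoint operator on $L^2(S^1;\gamma^*\cald)$ of the shape $-J_0\partial_t-S_\gamma(t)$, and $r$ is quadratically small in $\zeta$. The Morse--Bott hypothesis is precisely that $\ker A$ is the tangent space to the critical manifold and that $A$ has a spectral gap $\delta>0$ off its kernel. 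Splitting $\zeta=\zeta^0+\zeta^\perp$ into the $\ker A$-part and its orthogonal complement, the standard differential-inequality argument in Hilbert space (the positive-spectrum component of $\zeta^\perp$ must vanish, as it would otherwise grow, and the negative-spectrum component decays) gives $\|\zeta^\perp(s)\|\leq Ce^{-\delta s}$; the tangential part satisfies $\partial_s\zeta^0=O(\|\zeta^\perp\|^2+\|\zeta^0\|\,\|\zeta^\perp\|)$, so $\zeta^0(s)$ is Cauchy and converges exponentially to the value that singles out the limit orbit inside $N_T$. A final elliptic bootstrap on the unit sub-cylinders promotes the exponential $W^{1,2}$ decay of $\zeta$ to exponential $C^k$ decay for every $k$, which is the asserted exponential asymptotic convergence.

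The main obstacle is the Morse--Bott refinement in the last paragraph: controlling the tangential ``drift'' $\zeta^0$ along the critical manifold $N_T$ and showing it does not wander off. In the nondegenerate case $\ker A=0$ and this difficulty evaporates, but in our $S^1$-orbibundle setting the asymptotic operator always has kernel, so the drift estimate — together with the earlier $C^0$ no-return argument that makes the linearization applicable in the first place — is the real content of the statement.
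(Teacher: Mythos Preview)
Your proposal is correct and follows exactly the approach taken in \cite{BEHWZ}, which is precisely what the paper does: it gives no proof of its own for this proposition, stating only that the proof appears in \cite{BEHWZ}. Your sketch of the Hofer--Wysocki--Zehnder asymptotic analysis together with Bourgeois' Morse--Bott refinement (the spectral gap for the asymptotic operator off the tangent space to $N_T$, and the control of the tangential drift $\zeta^0$) is a faithful outline of that argument.
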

     
This gives us the basic picture of holomorphic curves in this setting.  It remains to understand the 
topological structure of the space of such curves so that we may use them to define the differential in contact 
homology.  The first order of business is to understand the compactness properties of curves with prescribed 
positive and negative asymptotics.  Then we wish to understand the virtual dimensions of these spaces.

Although all of this can be defined in greater generality, we will restrict attention to the case of rational curves.  Given a pseudoholomorphic curve in the symplectization of
$M$ writing the component of the curve in the real direction $a(s, t).$  We call a puncture {\it positive} if $a$ is unbounded above, and {\it negative} if it is unbounded
below. 
By a version of the removable singularity theorem for pseudoholomorphic curves we know that curves with punctures whose cone component maps at a finite 
distance can be extended over the puncture. In this paper we consider only curves with a single positive puncture.  
Proposition ~\ref{ReebAsymp} tells us that around punctures, holomorphic maps look like cylinders over Reeb orbits near infinity.

\subsection{Moduli Spaces of Holomorphic Curves}
To define the differential we must consider moduli spaces of genus $0$ $J$-holomorphic curves into the symplectization of $M$.  
These curves can be considered as orbi-sections of the line orbi-bundle associated to $M$ as an $S^1$ orbi-bundle over $\calz$ with
its zero section removed.  We can, in principle, think of this differential as the standard one which comes with a generic choice of contact form.
In that case, one considers different moduli spaces than those which we are using here.  The fact that the homology is the same in both pictures 
comprises the main result of \cite{Bou02t}. 

\begin{definition}\label{modspace}
Let $(M, \eta)$ be a contact manifold such that the contact form $\eta$ is of Morse-Bott type.  Let $S^+,S_1, \ldots , S_m$ denote orbit spaces corresponding to the action by $R_{\eta}.$  Then we define the moduli space 
\begin{equation}
 \oldmathcal{M}_J(S^+,S_1, \ldots, S_k)
\end{equation}
to be the space of $J$-holomorphic punctured spheres into the symplectization of $M$ with $k+1$ punctures, 
which are cylindrically asymptotic to a Reeb orbit in $S^+$ at the positive puncture and to an orbit in $S_i$ at each negative puncture, $i=1,\ldots,k$. 
\end{definition}

These moduli spaces are not always compact, thus we must make sense of the compactification in order to understand 
how we should define the differential in contact homology.  We want to develop an analogue of Bott's version of 
Morse theory.  Recall that when critical points of a Morse-Bott function are not isolated, yet come in families, 
the differential in homology splits into an part which accounts for gradient trajectories connecting critical 
submanifolds, and those which connect critical points of a Morse function on each critical submanifold.  We think 
of the differential splitting into an external and an internal piece.  One can also define a similar differential 
on contact homology, but now we need a more general set up for the moduli spaces.  Our moduli spaces consist of 
maps into the symplectization of $M.$    The boundary of this moduli space consists of broken
holomorphic curves, i.e., buildings of curves with negative and positive asymptotics matching up at Reeb orbits.
As long as the contact form and almost complex structure is fixed we know that the compactification of 
$\oldmathcal{M}_J(S^+,S_1, \ldots, S_k)$ consists of ``broken trajectories,'' even in the Morse-Bott case.
To use the Morse-Bott setting to compute contact homology we need to perturb $\eta$ and $J.$  In his thesis
Bourgeois proved that when these structures are also allowed to vary and in particular when $\epsilon$ goes to zero in the perturbation of the
contact form, then one obtains broken curves, but now 
though parts of these curves converge to Reeb orbits, they no longer ``match up'' in the same way as before.  Now
the cylindrical parts are asymptotically cylindrical over Reeb orbits, but the lower and upper ends are connected
by cylinders over fragments of gradient trajectories of a Morse function on the orbit space containing the Reeb orbits over which the 
components are asymptotic.  These are fragments since we cannot control exactly where such a curve will approach the orbit space. 
One should think of this as analogous as the situation in ordinary Morse-Bott homology, 
where we have gradient trajectories connecting different critical submanifolds, and gradient trajectories of 
Morse functions within each critical submanifold.  Let us now state the definitions and theorems which make the 
above summary precise.  All of the following can be found in ~\cite{BEHWZ}, or ~\cite{Bou02t}.

These spaces are, first of all, defined in terms of \emph{stable nodal Riemann surfaces}.  Let $\Sigma$ denote a genus
$0$ Riemann surface which is possibly disconnected. Suppose further that we are given two disjoint finite collections of points $D$ and $V$, where $D$ has an even number of elements which we 
interpret and will write as pairs $(\ol{d_i}, \ul{d_i})\in D\times V\subset \grS$.  The set $D$ is the set of \emph{special} marked 
points.  The stability condition means that we require the union of the two sets of marked points to have 
cardinality at least $3$.  From such a collection we get a new (possibly singular) Riemann surface by
identifying the points of each pair $(\ol{d_i} , \ul{d_i})$.  Finally we let $Z$ denote a finite set which we
call \emph{punctures.}
\begin{definition}
A stable nodal Riemann surface is a quintuple \break $(\Sigma, j, V, Z, D)$, where the sets $V,Z$ and $D$ are defined in the 
preceding paragraph, and  $j$ is a complex structure on $\Sigma.$ 
\end{definition}

\begin{definition} 
 Let $(M, \eta)$ be a contact manifold, $W$ its symplectization, $J$ an almost complex structure
defined as in ~\ref{goodJ}.  A \textbf{nodal holomorphic curve}, or a \textbf{holomorphic building} of height $1$ is a proper 
holomorphic map of finite energy 
$$\tu =(a, u): \Sigma \setminus Z \ra{1.7} W= \bb{R} \times V$$ 
where $a : \Sigma \setminus Z \ra{1.5} \bb{R}$ and
$ u : \Sigma \setminus Z \ra{1.4} M$, and $\tu$ satisfies
$\tu(\ol{d_i}) =  \tu(\ul{d_i})$
for each pair $(\ol{d_i}, \ul{d_i})\in D\times V$.
\end{definition}

We require the stability condition that at least one component of the image of such a curve is different from
a cylinder over a Reeb orbit, and that for each constant component of such a map, that the associated component
of $\Sigma$ is stable.  Moreover the set $Z$ is divided into two disjoint sets of positive and negative punctures.  

The space of holomorphic buildings of height $1$ is not compact.  Therefore we need to define the space 
of height $k$ holomorphic buildings.  Suppose that we are given $k$ possibly disconnected height $1$ holomorphic
buildings
$$\tu_m = (a_m, u_m, M_m, Z_m, D_m), \qquad m=1,\cdots,k.$$ 
We can then consider the nodal Riemann surface obtained by matching Reeb orbits corresponding to boundary circles
around the negative punctures of $Z_m$ to the positive punctures of $Z_{m-1}$, then we know that the positive 
asymptotics of $u_{m-1}$ match up with the asymptotics of the negative puncture of $u_m$.  Given a cross ordering 
on the sets $M­­_j$ along with local orientation reversing diffeomorphisms from neighborhoods of positive puncture of $\tu_{m-1}$ to
neighborhoods of the negative punctures of $\tu_m,$ we obtain what is called a \emph{holomorphic building of height
k}.
   
The main compactness theorem for symplectizations is (\cite{BEHWZ}, Section 10)
\begin{theorem}
 Given a sequence of stable holomorphic buildings of height $1$ there is a subsequence converging to 
a stable curve of height $k.$ 
\end{theorem}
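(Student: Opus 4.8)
The plan is to run the standard scheme for Gromov--type compactness, adapted from closed symplectic manifolds to symplectizations with cylindrical ends, exactly as developed in \cite{BEHWZ} and (in the Morse--Bott setting) in \cite{Bou02t}. The first step is to extract a uniform energy bound. Since the asymptotic Reeb orbits at the single positive puncture and at the negative punctures of every building in the sequence are constrained to lie in a fixed finite collection of orbit spaces, the action $\cala$ of these asymptotics is uniformly bounded; Stokes' theorem applied to $\tilde u_n^*d\eta$, together with the sign of the punctures and the fact that a $J$ of the special adapted form makes cylinders over Reeb orbits the only curves of zero $\omega$-energy, then bounds both the $\omega$-energy and the full Hofer energy $E(\tilde u_n)$ independently of $n$. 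Monotonicity for $J$-holomorphic curves supplies a universal constant $\hbar>0$ that is a lower bound for the energy of any nonconstant finite-energy curve other than a trivial cylinder; this quantization controls all subsequent degeneration.

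Next I would analyze the domains. Passing to a subsequence, the punctured domains $(\Sigma_n\setminus Z_n,j_n)$ converge in the Deligne--Mumford sense to a stable nodal surface, which fixes the combinatorial type of the limit, including the nodes along which the domain pinches. On the thick part of the domain one uses the usual dichotomy: either $|d\tilde u_n|$ is locally uniformly bounded, in which case elliptic estimates and Arzel\`a--Ascoli give $C^\infty_{\mathrm{loc}}$ convergence to a finite-energy $J$-holomorphic map, or the gradient blows up somewhere and a rescaling extracts a nonconstant finite-energy plane or sphere carrying at least $\hbar$ of energy. Because the total energy is bounded, only finitely many such bubbles can form, and the standard soft-rescaling argument rules out energy concentration in the short connecting necks, so no $\omega$-energy is lost in the bubbling.

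The heart of the argument, and the step I expect to be the main obstacle, is the neck analysis: controlling the maps on the long cylindrical regions of the domain that appear as it pinches, and simultaneously on regions where the $\bbr$-component $a_n(s,t)$ has unbounded oscillation. On a long neck carrying energy less than $\hbar$, the a priori asymptotic estimates of \cite{BEHWZ} — which upgrade Proposition \ref{ReebAsymp} using that $\eta$ is of Morse--Bott type — show that $\tilde u_n$ is $C^0$-close, uniformly in $n$, to a cylinder over a closed Reeb orbit, with exponential decay of the derivatives away from the ends, so that no energy escapes into the neck. Where $a_n$ oscillates without bound one translates in the target $\bbr$-direction, which is an isometry of the symplectization, and repeats the analysis on each translate; every such translation produces a new level of the limiting building. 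The matching of asymptotic Reeb orbits between consecutive levels, and between components meeting at a node, then follows from the exponential convergence at the punctures, producing a pre-building whose constant and trivial-cylinder components are exactly those stabilized by the nodal data, as the stability conditions in the definitions above require.

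Finally one must bound the height $k$. By stability every level of the limit contains a component that is not a cylinder over a Reeb orbit; by monotonicity each such component carries at least $\hbar$ of $\omega$-energy; since the total energy is bounded by the estimate of the first step, $k\le E/\hbar$ up to an additive constant determined by the fixed number of punctures. This both shows the process terminates and identifies the limit as a genuine stable holomorphic building of some finite height $k$, completing the extraction of the convergent subsequence. The only genuinely hard input is the uniform-in-$n$ neck/asymptotic estimate; everything else is bookkeeping around it.
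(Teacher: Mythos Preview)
The paper does not actually prove this theorem: it is simply quoted as ``the main compactness theorem for symplectizations'' with a reference to \cite{BEHWZ}, Section 10, and no argument is given. Your outline is a faithful sketch of the strategy carried out in that reference (energy bound via Stokes, Deligne--Mumford limit of domains, bubbling via rescaling with the $\hbar$ quantum, long-cylinder/neck analysis near Reeb orbits, target translation to produce levels, and the bound on $k$ from energy quantization), so there is nothing to compare against in the paper itself; you have supplied what the authors deliberately outsourced.
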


This theorem is valid as long as the contact form, and almost complex structure remain fixed.  However, to really
use the full power of the Morse-Bott set up, one needs to see what happens when a sequence is given where all given
structures vary.  In other words we have a sequence of curves, which have non-degenerate asymptotics (i.e., the closed Reeb orbits are isolated), where the 
contact form in the limit has degenerate asymptotics (i.e., the closed Reeb orbits come in families). The question is what does this compactification look like?

To describe the limit curves we must first look at the manifolds of Reeb orbits coming from our contact form 
of Morse-Bott type.  Let $T$ be some period of the Reeb vector field, and let $N_T$ denote the manifold foliated
by orbits of period $T.$  We choose a Morse function, $f$, on $N_T$ invariant with respect to the Reeb flow, and constant
in the normal direction to $N_T$.  Now for $\epsilon > 0 $ sufficiently small we get the new contact form 
$$\eta_{\epsilon} = (1 + \epsilon f) \eta.$$ 
We now get corresponding symplectic forms on $W$, and new almost complex structures $J_{\epsilon}$ defined by 
$$J_{\epsilon}|_{\mathcal{D}} = J$$
and 
$$J_{\epsilon} \frac{\partial}{\partial s} = R_{\epsilon}.$$
Here $R_{\epsilon}$ denotes the Reeb vector field of $\eta­_{\epsilon}.$ 
We want to see what the limits of height one curves look like when $\epsilon \rightarrow 0$.

\begin{definition} \label{defgenholcurve}
 A \textbf{generalized holomorphic curve with $l$ sublevels} consists of a collection of the following data:
 $l$ holomorphic buildings of height $1$ satisfying the conditions for a holomorphic building of height $l$ with
the added condition that there are $l +1$ collections of cylindrical gradient trajectories of Morse functions corresponding to the 
asymptotics of each holomorphic building of height $1$.
 \end{definition}

Indeed it was proved in ~\cite{Bou02t} that the boundary of $\oldmathcal{M}_J(S^+,S_1, \ldots, S_k)$ consists exactly of broken curves with Morse
trajectories as in definition \ref{defgenholcurve}. Thus, we have
\begin{theorem} \label{compGenHol}
 A sequence of holomorphic curves defined as in the previous paragraph, has a subsequence converging to 
a generalized holomorphic curve of height $l$ as $\epsilon \rightarrow 0.$
\end{theorem}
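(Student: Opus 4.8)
The plan is to follow Bourgeois's thesis \cite{Bou02t}, adapting the SFT compactness theorem of \cite{BEHWZ} to the situation in which the contact form itself is allowed to degenerate as $\epsilon\to 0$. Each curve in the sequence is $J_\epsilon$-holomorphic for the data $(\eta_\epsilon,J_\epsilon)$ with $\eta_\epsilon=(1+\epsilon f)\eta$, and by hypothesis the sequence has uniformly bounded Hofer energy. For fixed $\epsilon>0$ the closed Reeb orbits are nondegenerate, so the compactness theorem quoted above applies term by term; the real content is to control what survives in the limit, when $\eta_0=\eta$ has its closed Reeb orbits organized into the Morse--Bott families $N_T$, with orbit spaces $S_T=N_T/S^1$.

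The key analytic step is the asymptotic analysis near the punctures. At a Morse--Bott orbit the asymptotic operator acquires a kernel, namely the directions tangent to $S_T$; a holomorphic half-cylinder asymptotic to such a family therefore need not converge to a single orbit, but may drift within the family. Applying the exponential decay estimates of \cite{BEHWZ} with the $\epsilon$-dependent contact form, one shows that on the region of the domain where the $\bb{R}$-component of the curve is large it is $C^0$-close to a cylinder over a Reeb orbit of $\eta_\epsilon$ whose projection to $S_T$ satisfies, to leading order, the negative gradient equation of $\epsilon f_j$ for the transverse metric. Rescaling the cylindrical length coordinate by $\epsilon$ turns this into the honest gradient equation for $f_j$ on $S_T$; consequently, as $\epsilon\to 0$, the long necks of the curves separate into genuine holomorphic buildings joined along finite (possibly broken) fragments of gradient trajectories of the $f_j$.

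The remaining step is the usual bubbling-off and diagonal argument. After passing to a subsequence, the conformal structures on the domains converge to a stable nodal configuration; one extracts Gromov limits of the pieces lying between consecutive rescaling windows, matches asymptotics at the Reeb orbits, and checks that no energy escapes, using the monotonicity lemma and the energy quantization for nonconstant holomorphic planes and spheres. Each maximal holomorphic piece is a height-$1$ building, the rescaling windows contribute the $l+1$ collections of cylindrical gradient trajectories, and stability holds as in Definition~\ref{defgenholcurve}; together these data assemble into a generalized holomorphic curve with $l$ sublevels, proving Theorem~\ref{compGenHol}.

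The main obstacle is precisely this uniform asymptotic control as $\epsilon\to 0$: one must prove that the drift of the curve along the Morse--Bott family is governed by the gradient flow of $f_j$ up to errors that vanish after rescaling, and that the number of rescaling windows --- hence the height $l$ and the number of gradient fragments --- stays bounded, with no concentration of energy in the necks. This is the technical heart of \cite{Bou02t}. In our setting, where $M$ is an $S^1$-orbibundle over a toric symplectic orbifold and $f_j$ is a Morse function on an orbifold stratum $S_{T_j}$, the argument goes through verbatim, since the almost complex structure $J$ is invariant under the torus action and all of the above is compatible with the orbifold charts.
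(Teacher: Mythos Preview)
The paper does not prove this theorem; it simply records it as a result of Bourgeois's thesis \cite{Bou02t}, stating just before the theorem that ``it was proved in \cite{Bou02t} that the boundary of $\oldmathcal{M}_J(S^+,S_1, \ldots, S_k)$ consists exactly of broken curves with Morse trajectories as in definition \ref{defgenholcurve}.'' Your proposal is therefore not being compared against an actual proof in the paper but against a citation, and the outline you give --- SFT compactness from \cite{BEHWZ} combined with the $\epsilon$-rescaling argument showing the long necks converge to gradient fragments of $f_j$ on $S_T$ --- is precisely the strategy of \cite{Bou02t} that the paper is invoking. In that sense your sketch is consistent with, and more detailed than, what the paper offers.
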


On the other hand, we really want to know that these generalized holomorphic curves tell the whole story, i.e., we would like to know
that these are enough to make our computations later.  More precisely, we would like to know if, given a rigid generalized holomorphic curve for
an almost complex structure determined by our contact form of Morse-Bott type, that there is a sequence of $J_{\epsilon}$-holomorphic curves converging
to it.  To do this one must construct a family of approximately holomorphic curves obtained by gluing together fragments of holomorphic cylinders, and 
then finding nearby genuinely holomorphic cylinders. Then one uses Floer's Picard lemma to prove that such a map must be surjective.  A proof is
sketched in ~\cite{Bou02t}.

\subsection{The differential}
We define for $p \in CH_*$ the differential on generators
\begin{equation}\label{conhomdiff}
d p = \partial_{MSW}p + d_{CH} p
\end{equation}
where $\partial_{MSW}$ is the differential on the Morse-Smale-Witten complex determined by our choice of Morse function.  $d_{CH}$ is defined by 
$$d_{CH} p = \sum_{A\in H_2(\calz, \bb{Z})}\sum_{dim \oldmathcal{M}(S^{+}, \ldots, S_k)/\bb{R}=0}\frac{1}{\kappa_p}\#^{alg}\oldmathcal{M}(S^{+}, \ldots, S_k)/\bb{R} q_1 
\cdot \ldots \cdot q_n e^{A} .$$ 
This is to be extended to the whole algebra by linearity and the graded Leibniz rule.
Here $\kappa_p$ denoted the multiplicity of the Reeb orbit associated to $p.$ The $q_j$'s denote critical points of a Morse function, $f$,
on each orbit space $S_{T_j},$ or alternatively closed Reeb orbits of a perturbed contact form $\eta_{f}.$  $A$ denotes the homology classes 
of a pseudoholomorphic curve asymptotically cylindrical over the specified Reeb orbits associated to each critical point.  
In other words, $d_{CH}$ counts rigid generalized pseudoholomorphic curves ~\cite{Bou02t} with positive asymptotics in $S^{+}$ and negative 
asymptotics in $S_1, \ldots S_k.$  There is a sign as in the non-degenerate case as defined in ~\cite{BoMo04}.  
This will not matter much to us since in our cases these signs never get a chance to show up.  Notice that $d_{CH}$ vanishes as long as the relevant
moduli spaces have dimension greater than $1$ before the $\bb{R}$ quotient by translation.  As mentioned before, $e^A$ is a convenient notation
for keeping track of the homology class of a rigid $J$-holomorphic curve.  Here we have suppressed the choice of 
Riemann surface trivializing an orbit corresponding to $p.$

\begin{prop}\label{diffvanish}
 When $M$ is the total space of an orbibundle of a symplectic orbifold, then there are no rigid holomorphic curves into the symplectization of $M$.
\end{prop}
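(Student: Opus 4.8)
The plan is to exploit the extra circle symmetry carried by every such $M$. Since $M$ is the total space of an $S^1$-orbibundle $\pi\colon M\to\calz$, the Reeb flow is a locally free $S^1$-action on $M$ preserving $\cald$, and it lifts to an action on the symplectization $C(M)=M\times\bbr^+$ (acting on the first factor) which is holomorphic for the almost complex structure $I$ by part (4) of Lemma \ref{Icompat}. Moreover $\pi$ extends to a holomorphic map $\hat\pi\colon C(M)\to\calz$ whose fibres are the $\bbc^*$-orbits generated by the Reeb field $R$ and the Liouville field $\Psi$: indeed $I$ preserves both $\cald$ and the span $\langle R,\Psi\rangle$, carries $\cald$ isomorphically onto $T\calz$ via the horizontal lift of $\hat J$, and sends $\langle R,\Psi\rangle$ to the vertical direction of $\hat\pi$, so $d\hat\pi\circ I=\hat J\circ d\hat\pi$.

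First I would push holomorphic curves down to $\calz$. Let $u\colon\Sigma\setminus Z\to C(M)$ be a component appearing in a moduli space $\oldmathcal{M}_J(S^+,S_1,\ldots,S_k)$. Then $\bar u=\hat\pi\circ u$ is holomorphic into $\calz$, and by Proposition \ref{ReebAsymp} near each puncture $u$ is asymptotically cylindrical over a closed Reeb orbit, which is a single $\pi$-fibre; hence $\bar u$ sends a punctured neighbourhood to a point and extends across $Z$ to a holomorphic map $\bar u\colon\Sigma\to\calz$. If $\bar u$ is constant then $u$ has image contained in one $\bbc^*$-fibre $\cong\bbr\times S^1$ and, being asymptotically cylindrical, lies in a cylinder over a Reeb orbit --- precisely the components excluded by the stability condition imposed on (generalized) holomorphic curves; the same applies to branched multiple covers of such cylinders. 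So every admissible configuration has at least one component with $\bar u$ non-constant.

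Next I would show that such a component $u$ with $\bar u$ non-constant cannot be fixed, as an unparametrized curve, by the $S^1$-action. If $\theta\cdot u=u\circ\psi_\theta$ for all $\theta\in S^1$, then the image of $u$ is an $S^1$-invariant $1$-dimensional complex-analytic subset of $C(M)$ lying over the complex curve $\bar u(\Sigma)\subset\calz$; its intersection with a generic $\bbc^*$-fibre is then a finite, rotation-invariant subset of $\bbc^*$, hence empty, contradicting that the image projects onto all of $\bar u(\Sigma)$. Therefore the $S^1$-stabilizer of $u$ is finite and the $S^1$-orbit of $u$ is genuinely one-dimensional. Since the Reeb flow carries each closed Reeb orbit to itself it preserves the asymptotic data, so $S^1$ acts on every $\oldmathcal{M}_J(S^+,S_1,\ldots,S_k)$; in the Morse--Bott setting the cylinders over fragments of gradient trajectories that also occur are lifts of trajectories on the orbit spaces $S_{T_j}=N_{T_j}/S^1$ and hence likewise come in $S^1$-families, so $S^1$ acts on the whole space of generalized holomorphic curves. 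This action commutes with the $\bbr$-action by target translations --- together they generate the $\bbc^*$-action on $C(M)$ --- so it descends to $\oldmathcal{M}_J/\bbr$. Since every generalized holomorphic curve has at least one non-cylinder component, on which $S^1$ acts with finite stabilizer, no non-empty stratum of $\oldmathcal{M}_J/\bbr$ can be $0$-dimensional; that is, there are no rigid holomorphic curves.

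The step I expect to be the main obstacle is making the $S^1$-symmetry argument airtight in the full Morse--Bott compactification: one must check that the action is well defined on buildings (respecting the matching of positive and negative asymptotics at Reeb orbits and the gradient-fragment data of Definition \ref{defgenholcurve}), that it fixes the relative homology decoration $A\in H_2(\calz,\bbz)$ recorded in the coefficient ring so that it genuinely preserves each stratum of $\oldmathcal{M}_J/\bbr$, and that constant (ghost) components do not interfere with the finite-stabilizer conclusion. A secondary point is the clean separation of the two group actions: one must be sure that quotienting by target translations $\bbr$ leaves an effective residual $S^1$ rather than a trivial one, which is exactly what the non-constant base component guarantees.
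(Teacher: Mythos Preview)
Your argument is correct and follows the same idea as the paper's proof: there is an $\bbr$-action by translation in the symplectization direction and an $S^1$-action coming from the Reeb flow, and since both act on the moduli spaces, these have dimension at least $2$, hence are never rigid after the $\bbr$-quotient. The paper states this in two sentences without further justification; your version is considerably more careful in that you actually verify the $S^1$-action has finite stabilizers on every admissible configuration (via the projection to $\calz$ and the analysis of non-constant base components), whereas the paper simply asserts the action is effective. The extra care you take with the Morse--Bott compactification, ghost components, and the interaction of the two actions is not present in the paper but does not change the underlying strategy.
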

\begin{proof}
There  is an effective $\bb{R}$-action, as well as that of a circle on the moduli spaces; hence, these spaces have dimension at least $2.$ So they can never be rigid.    
\end{proof}

\begin{remark}[Note on transversality].
Though in some special cases we can use the nice properties of toric manifolds to determine regularity of the moduli spaces of curves defined above, 
the proofs both of invariance of contact homology, as well as to make the proof that the Morse-Bott complex actually computes the homology of 
the perturbed complex requires the use of abstract perturbations of the $\delbar_J$ operator.  The authors believe that the results of Hofer, Wysocki, and Zehnders polyfold theory shall provide a good framework for this problem, however,  we make it a standing assumption that there exists an abstract perturbation of the $\delbar_{J}$ operator which 
makes its linearization surjective.   
Proposition \ref{diffvanish} requires no transversality result for the $\delbar_J$ operator since we can get at least these two dimensions without any appeal to abstract Fredholm theory.  This does not make the transversality problem go away, however, since it is still needed in proofs of invariance, and independence of choices.  Moreover, when one wishes to analyze higher dimensional moduli spaces by adding marked points, one needs the relevant dimension formulae to hold, although this can be handled in many cases using the fact that $J$ can be chosen integrable in these toric situations.  
We should also mention that, even without the transversality assumption mentioned here, we can obtain a weaker version of invariance as we shall see later.
\end{remark}

\section{Toric Contact Structures of Reeb Type}

It is well understood \cite{BM93,Ler02a} that toric contact structures on manifolds of dimension greater than three come in two types, those where the action of the torus is free, and those where it is not. The latter contain an important special subclass known as toric contact structures of Reeb type \cite{BG00b}, and the former cannot occur on $S^3$-bundles over $S^2$ so we shall not consider them further in this paper. Moreover, toric contact structures of Reeb type are always quasi-regular, and correspond to convex polyhedral cones in the dual of the Lie algebra of the torus \cite{BG00b,Ler02a}.

\begin{definition}
A {\bf toric contact manifold} $(M,\cald,{\oldmathcal A})$ is a contact manifold of dimension $2n+1$ together with an effective action of a torus $T$ of dimension $n+1$ that leaves the contact structure invariant, i.e. if ${\oldmathcal A}:T\times M\ra{1.6} M$ denotes the action map then ${\oldmathcal A}_*\cald=\cald$.
\end{definition}

By averaging over $T$ we can always find a contact 1-form $\eta$ representing $\cald$ such that ${\oldmathcal A}^*\eta=\eta$. In this case we also have ${\oldmathcal A}_*R=R$ for the Reeb vector field. A toric contact manifold is said to be of {\it Reeb type} if there is a contact form $\eta\in \gC^+(\cald)$ whose Reeb vector field lies in the Lie algebra $\gt$ of $T$.

\begin{definition}\label{torconequiv}
Two toric contact manifolds $(M,\cald,{\oldmathcal A})$ and $(M',\cald',{\oldmathcal A}')$ are said to be {\bf equivariantly equivalent} (or equivalent toric contact manifolds) if there exists a diffeomorphism $\varphi:M\ra{1.6} M'$ such that $\varphi_*\cald=\cald'$ and $\varphi\circ {\oldmathcal A}= 
{\oldmathcal A}'\circ \varphi$.
\end{definition}

Toric contact manifolds were classified by Lerman \cite{Ler02a}. Notice that the first condition of Definition \ref{torconequiv} says that the diffeomorphism $\varphi$ is a contactomorphism, while the second condition says that ${\oldmathcal A}'$ is conjugate to ${\oldmathcal A}$ under $\varphi$. So if the second condition fails the tori generated by ${\oldmathcal A}$ and $\varphi^{-1}\circ {\oldmathcal A}'\circ \varphi$ belong to distinct conjugacy classes in the contactomorphism group $\gC\go\gn(M,\cald)$. Furthermore, to each such conjugacy class there is an associated toric CR structure $(\cald,J)$ which by Theorem 7.6 of \cite{Boy10a} is unique up to biholomorphism.

\subsection{Contact Reduction} It is well known (cf. \cite{BG00b,Ler02a}) that every contact toric structure of Reeb type can be obtained by symmetry reduction of the standard sphere by a compact Abelian group $T$, and that this is equivalent to the symplectic reduction of the standard symplectic structure on $\bbc^N\setminus \{0\}$ by a compact Abelian group which commutes with the action of dilations of the cone. For this one must choose the zero level set of the toral moment map. This equivalence can be described  by the commutative diagram (cf. \cite{BG05}, pg 293): 
\begin{equation}\label{commquot}
\begin{matrix} S^{2N-1}_\bfw &\longleftrightarrow &\bbc^{N}\setminus \{0\} \\
                         \Downarrow &&           \Downarrow \\
                         M^{2n-1} &\longleftrightarrow &C(M^{2n-1}),
\end{matrix}
\end{equation}
with $\dim T=N-n$.
According to Lerman \cite{Ler04} $\pi_1(M)=\pi_0(T)$ and $\pi_2(M)=\pi_1(T)=\bbz^{\dim T}$. This implies

\begin{lemma}\label{circlered} Let $M$ be an $S^3$-bundle over $S^2$.
Every toric contact structure on $M$ can be obtained by contact circle reduction of the standard contact structure on $S^7$. 
\end{lemma}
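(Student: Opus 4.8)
The plan is to combine the general reduction picture recalled in diagram (\ref{commquot}) with the homotopy-theoretic identification of $\pi_1$ and $\pi_2$ of a toric contact manifold due to Lerman \cite{Ler04}. By the classification of contact toric manifolds \cite{Ler02a} together with the discussion of contact reduction above, every toric contact structure on $M$ of Reeb type (and on an $S^3$-bundle over $S^2$ the torus action cannot be free, so Reeb type is the only option left) arises as a contact quotient $M^{2n-1}=S^{2N-1}_{\bfw}/T$ for a suitable weight vector $\bfw$ and a compact Abelian group $T$ of dimension $N-n$, equivalently as the symplectic quotient $\bbc^N\setminus\{0\}/\!/T$ respecting the dilation action of the cone. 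The point is simply to pin down $N$ and $\dim T$ in our situation.

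First I would apply Lerman's formulas $\pi_1(M)=\pi_0(T)$ and $\pi_2(M)=\pi_1(T)\cong\bbz^{\dim T}$. Since $M$ is an $S^3$-bundle over $S^2$, the long exact homotopy sequence of the fibration $S^3\hookrightarrow M\to S^2$ gives $\pi_1(M)=0$ and $\pi_2(M)\cong\pi_2(S^2)\cong\bbz$. Hence $\pi_0(T)=0$, so $T$ is connected, i.e. $T$ is an honest torus, and $\pi_1(T)\cong\bbz$, so $\dim T=1$, i.e. $T=S^1$. With $\dim T=N-n$ and here $n=3$ (so $M$ is $5$-dimensional), we get $N=4$, so the ambient sphere is $S^{2N-1}=S^7$. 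Thus $M$ is obtained by a contact circle reduction of a weighted $7$-sphere $S^7_{\bfw}$.

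It then remains to replace the weighted sphere $S^7_{\bfw}$ by the standard contact structure on $S^7$. This is routine: the weighted contact structure on $S^7_{\bfw}$ is itself $T^4$-toric and is obtained from the standard $S^7$ by a transverse deformation of the Reeb field inside the Sasaki cone (equivalently, the weighted $\bbc^4$-action on $\bbc^4\setminus\{0\}$ is conjugate, as a symplectic cone, to the standard one after rescaling the diagonal weights); so one may fold this deformation into the choice of reducing circle, at the cost of changing $\bfw$. Concretely, composing the weighted structure on $S^7$ with the circle reduction produces the same contact quotient as performing a (possibly different) circle reduction of the standard $S^7$, because both are expressed by the same symplectic quotient $\bbc^4\setminus\{0\}/\!/S^1$ of the same symplectic cone, only with different choices of moment-map level and commuting circle. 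This lets us state the reduction with the standard $S^7$.

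The only genuine subtlety — and the step I expect to be the main obstacle to write cleanly — is the last reduction of $S^7_{\bfw}$ to the \emph{standard} $S^7$: one must be careful that the circle one reduces by is embedded so that the quotient is smooth (free or at worst with the quotient being a manifold), and that the toric data (the moment cone in $\gt^*$) one obtains really does range over all toric contact structures on $M$ rather than a sub-family. Handling this amounts to checking, via Lerman's classification, that the rational polyhedral cones realizable as images of contact moment maps on $S^7/S^1$ for varying admissible circle subgroups exhaust exactly the cones corresponding to $S^2\times S^3$ and $X_\infty$; the bookkeeping is the parametrization by the quadruple $\bfp$ mentioned in the introduction. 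Everything else is a direct invocation of \cite{Ler02a,Ler04} and the diagram (\ref{commquot}).
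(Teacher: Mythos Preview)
Your argument is correct and follows essentially the same route as the paper: invoke the general reduction picture (diagram (\ref{commquot})), apply Lerman's identification $\pi_1(M)=\pi_0(T)$ and $\pi_2(M)=\pi_1(T)=\bbz^{\dim T}$, and use $\pi_1(M)=0$, $\pi_2(M)=\bbz$ for an $S^3$-bundle over $S^2$ to force $T$ connected of rank one, hence $N=4$. The paper presents this as a one-line ``This implies'' right before the lemma, so your write-up is actually more detailed than the original.

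One small remark: your final paragraph worries more than necessary about passing from the weighted $S^7_{\bfw}$ to the standard $S^7$. The paper handles this a few lines later by the observation that the weighted contact form $\eta_0$ with Reeb field $R_\bfp=\sum_j p_jH_j$ still represents the \emph{standard} contact structure on $S^7$ after a change of coordinates; the weights only move the Reeb vector inside the same Sasaki cone, not the underlying contact bundle. So there is no separate exhaustion argument to carry out, and the parametrization by $\bfp$ is simply the choice of reducing circle inside the fixed $T^4\subset\gC\go\gn(S^7,\cald_{std})$.
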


We now describe this reduction. First the standard $T^4$ action on $\bbc^4$ is $z_j\mapsto e^{i\theta_j}z_j$, and its moment map $\Upsilon_4:\bbc^4\setminus \{0\}\ra{1.7} \gt^*_4=\bbr^4$
is given by 
\begin{equation}\label{sph4mom}
\Upsilon_4(z)=(|z_1|^2,|z_2|^2,|z_3|^2,|z_4|^2). 
\end{equation}
Now we consider the circle group $T(\bfp)$ acting on $\bbc^4\setminus \{0\}$ by
\begin{equation}\label{s1action}
(z_1,z_2,z_3,z_4)\mapsto (e^{ip_1\theta}z_1,e^{ip_2\theta}z_2,e^{-ip_3\theta}z_3, e^{-ip_4\theta}z_4),
\end{equation}
where $\bfp$ denotes the quadruple $(p_1,p_2,p_3,p_4)$ with $p_i\in\bbz^+$ and we assume $\gcd(p_1,p_2,p_3,p_4)=1$. We have an exact sequence of commutative Lie algebras 
\begin{equation}\label{Liealgseq}
0\ra{1.5}\gt_1(\bfp)\ra{1.5}\bbr^4\fract{\tvarpi}{\ra{1.5}}\gt_3(\bfp)\ra{1.5} 0
\end{equation}
where $\gt_1(\bfp)$ is the Lie algebra of $T(\bfp)$ generated by the vector field $L_\bfp =p_1H_1+p_2H_2-p_3H_3-p_4H_4$.

Dualizing (\ref{Liealgseq}) gives
\begin{equation}\label{g*exactseq}
0\ra{1.5}\gt_3^*(\bfp)\fract{\tvarpi^*}{\ra{1.5}}(\bbr^4)^*\ra{1.5}\gt_1^*(\bfp)\ra{1.5} 0. 
\end{equation}
The moment map $\Upsilon_1:\bbc^4\setminus \{0\}\ra{1.6} \gt_1^*=\bbr$ for this action is given by
\begin{equation}\label{conemomentmap}
\Upsilon_1(z)=p_1|z_1|^2+p_2|z_2|^2-p_3|z_3|^2-p_4|z_4|^2. 
\end{equation}

Now consider the 1-form 
\begin{equation}\label{sph1form}
\eta_0=-\frac{i}{2}\sum_{j=0}^n(z_jd\bar{z}_j-\bar{z}_jdz_j).
\end{equation}
on $\bbc^4\setminus \{0\}$ together with the vector field 
\begin{equation}\label{Reebsph}
R_\bfp=\sum_jp_jH_j
\end{equation}
where $H_j=-i(z_j\frac{\partial}{\partial
z_j}-\bar{z}_j\frac{\partial}{\partial \bar{z}_j})$. Imposing the constraint $\eta_0(R_\bfp)=1$ gives $S^7$ represented as $\sum_jp_j|z_j|^2=1$. Then $\eta_0$ pulls back to a contact form on $S^7$, also denoted by $\eta_0$, with Reeb vector field $R_\bfp= p_1H_1+p_2H_2+p_3H_3+p_4H_4$. By a change of coordinates one easily sees that this represents the standard contact structure on $S^7$.

So the zero level set $\Upsilon^{-1}_1(0)$ is diffeomorphic to a cone over $S^3\times S^3$, or equivalently restricting to $S^7$, the zero level set of $\mu_{\eta_0}$ is $S^3\times S^3$, represented by
\begin{equation}\label{s3s3}
p_1|z_1|^2+p_2|z_2|^2=\frac{1}{2}, \qquad p_3|z_3|^2+p_4|z_4|^2=\frac{1}{2}.
\end{equation}

The action of $T(\bfp)$ is free on this zero set if and only if $\gcd(p_i,p_j)=1$ for $i=1,2$ and $j=3,4$. So assuming these gcd conditions our reduced contact manifold is the $M_\bfp=(S^3\times S^3)/T(\bfp)$ whose contact form is the unique 1-form $\eta_\bfp$ satisfying $\gri^*\eta_0=\grr^*\eta_\bfp$ where $\gri:\mu_{\eta_0}^{-1}(0)\ra{1.6} S^7$ and $\grr:\mu_{\eta_0}^{-1}(0)\ra{1.6} M_\bfp$ are the natural inclusion and projection, respectively. In order to identify $M_\bfp$ we consider the $T^2(\bfp)$ action on $\mu_{\eta_0}^{-1}(0)\approx S^3\times S^3$ generated by the $S^1$ action (\ref{s1action}) together with the $S^1$ action generated by the Reeb vector field $R_\bfp$. We have

\begin{definition}\label{admis}
We say that the quadruple $\bfp=(p_1,p_2,p_3,p_4)$ of positive integers is {\bf admissible} if $\gcd(p_i,p_j)=1$ for $i=1,2$ and $j=3,4$. We denote the set of admissible quadruples by $\cala$.
\end{definition}

Let us describe some obvious equivalences. We can interchange the coordinates $z_1\leftrightarrow z_2$, likewise $z_3\leftrightarrow z_4$. Thus, without loss of generality we can assume that $p_1\leq p_2$ and $p_3\leq p_4$. We can also interchange the pairs $(z_1,z_2)$ and $(z_3,z_4)$. 

We are now ready for

\begin{lemma}\label{t2quot}
Let $\bfp$ be admissible. Then quotient space of $\mu_{\eta_0}^{-1}(0)\approx S^3\times S^3$ by the $T^2(\bfp)$ action is the orbifold $\bbc\bbp(\bp_1,\bp_2)\times \bbc\bbp(\bp_3,\bp_4)$ where $(p_1,p_2)=k(\bp_1,\bp_2)$ and $(p_3,p_4)=l(\bp_3,\bp_4)$ with $\gcd(\bp_1,\bp_2)=\gcd(\bp_3,\bp_4)=1$. Moreover, the cohomology class in $H^2_{orb}(\bbc\bbp(\bp_1,\bp_2)\times \bbc\bbp(\bp_3,\bp_4),\bbz)$ of this orbibundle is the class of the K\"ahler form $\gro_\bfp=l\gro_{\bp_1,\bp_2}+k\gro_{\bp_3,\bp_4}$.
\end{lemma}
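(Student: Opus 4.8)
The plan is to argue in three steps: first identify the quotient \emph{space}, then its orbifold structure, and finally the class of the orbibundle via the orbifold Boothby-Wang correspondence.

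I would begin by examining the action of $T^2(\bfp)$. Inside the Lie algebra $\langle H_1,H_2,H_3,H_4\rangle$ of the standard $T^4$ the subalgebra of $T^2(\bfp)$ is spanned by $L_\bfp=(p_1,p_2,-p_3,-p_4)$ and $R_\bfp=(p_1,p_2,p_3,p_4)$, hence also by $(p_1,p_2,0,0)$ and $(0,0,p_3,p_4)$. Since the orbits of a connected torus action depend only on the image of its Lie algebra in the vector fields on the manifold, the $T^2(\bfp)$-orbits on $\mu_{\eta_0}^{-1}(0)\approx S^3\times S^3$ coincide with the orbits of the product circle action rotating $(z_1,z_2)$ with weights $(p_1,p_2)$ and $(z_3,z_4)$ with weights $(p_3,p_4)$. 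By~(\ref{s3s3}) the set $\mu_{\eta_0}^{-1}(0)$ is itself the product of the $3$-spheres $\{p_1|z_1|^2+p_2|z_2|^2=\tfrac12\}$ and $\{p_3|z_3|^2+p_4|z_4|^2=\tfrac12\}$, so the quotient is the product of the two weighted circle quotients. For the first factor, the rescaling $z_j\mapsto z_j/\sqrt{p_j}$ turns it into the round $3$-sphere carrying an $S^1$-action of weights $(p_1,p_2)=k(\bp_1,\bp_2)$; the subgroup $\bbz_k\subset S^1$ acts trivially, so the action descends to the effective weighted action with coprime weights $(\bp_1,\bp_2)$, whose quotient is the weighted projective line $\bbc\bbp(\bp_1,\bp_2)$ (topologically $S^2$, with two orbifold points of orders $\bp_1$ and $\bp_2$). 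The same applies to the second factor, giving $\calz\cong\bbc\bbp(\bp_1,\bp_2)\times\bbc\bbp(\bp_3,\bp_4)$.

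For the cohomology class I would use that $\eta_\bfp$ is, by construction, the Boothby-Wang contact form over the reduced symplectic orbifold $(\calz,\gro_\bfp)$, so the class of the $S^1$-orbibundle $M_\bfp\to\calz$ in $H^2_{orb}(\calz,\bbz)$ is $[\gro_\bfp]$, and it remains to compute $\gro_\bfp$ itself. Pulling back to $\mu_{\eta_0}^{-1}(0)$ gives $(\pi\circ\grr)^*\gro_\bfp=\gri^*d\eta_0$, which in the action-angle coordinates $t_j=|z_j|^2$, $\phi_j=\arg z_j$ is (up to sign) $\sum_j dt_j\wedge d\phi_j$; eliminating $t_2$ and $t_4$ by means of the two linear constraints in~(\ref{s3s3}) splits it as a sum of a $2$-form pulled back from $\bbc\bbp(\bp_1,\bp_2)$ and one pulled back from $\bbc\bbp(\bp_3,\bp_4)$. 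Comparing each summand with the normalized K\"ahler form $\gro_{\bp_1,\bp_2}$, respectively $\gro_{\bp_3,\bp_4}$ (the one whose class generates the relevant orbifold cohomology), while tracking the periods of the residual angle coordinates on the $T^2(\bfp)$-quotient, one reads off $\gro_\bfp=l\,\gro_{\bp_1,\bp_2}+k\,\gro_{\bp_3,\bp_4}$. Equivalently, $M_\bfp$ is the Sasakian join of the weighted $3$-spheres with Reeb weights $(\bp_1,\bp_2)$ and $(\bp_3,\bp_4)$ taken with join parameters $k$ and $l$, and the displayed class is the standard K\"ahler class of such a join.

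Steps one and two are routine; the point that needs real care is the bookkeeping in step three that produces the \emph{interchange} of the two greatest common divisors between the factors, i.e. the coefficient of $\gro_{\bp_1,\bp_2}$ being $l=\gcd(p_3,p_4)$ rather than $k$, and conversely for $\gro_{\bp_3,\bp_4}$. This is precisely where the admissibility conditions $\gcd(p_i,p_j)=1$ for $i\in\{1,2\}$, $j\in\{3,4\}$ enter — through the orders of the finite groups $\bbz_k$ and $\bbz_l$ occurring in the two reductions and their effect on the fibre circle of $M_\bfp\to\calz$ — and it is easy to be off by exactly these factors unless the normalizations of $\gro_{\bp_1,\bp_2}$ and $\gro_{\bp_3,\bp_4}$ relative to the chosen generators of $H^2_{orb}$ are pinned down at the outset.
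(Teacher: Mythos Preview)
Your identification of the quotient is essentially the same as the paper's: both observe that the Lie algebra of $T^2(\bfp)$ is spanned by the two ``diagonal'' directions $(p_1,p_2,0,0)$ and $(0,0,p_3,p_4)$, so the action splits and the quotient is the product of the two weighted projective lines.

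Where you and the paper diverge is in step three. You propose to compute the symplectic form $\gro_\bfp$ \emph{directly}, by writing $d\eta_0$ in action-angle coordinates on the level set, eliminating $t_2,t_4$ via the constraints, and then matching the two summands against the normalized K\"ahler forms on the factors; you also invoke the join construction as an alternative bookkeeping device. The paper instead argues purely cohomologically: it characterizes the orbibundle class as the class in $H^2_{orb}$ that pulls back to zero on $M_\bfp$, and computes the pullbacks $\pi^*[\gro_{\bp_1,\bp_2}]=k\grg$ and $\pi^*[\gro_{\bp_3,\bp_4}]=-l\grg$ simply by reading off from the $T(\bfp)$-action~(\ref{s1action}) that the circle wraps $k$ times around the first Hopf fibre and $l$ times (with reversed orientation) around the second. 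The combination $l\gra+k\grb$ then visibly annihilates under $\pi^*$.

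Your approach is valid, and the join perspective is exactly how the coefficients are explained later in the paper (Section~3.2), but it requires you to fix a normalization of $\gro_{\bp_i,\bp_j}$ at the outset and carry it through a coordinate computation --- precisely the point you flag as delicate. The paper's argument sidesteps that entirely: once one knows that $\pi^*$ sends the two orbifold generators to $k\grg$ and $-l\grg$, the interchange of $k$ and $l$ in the final formula is forced by linear algebra, with no symplectic normalization needed. So the paper's route is shorter and less error-prone for this particular conclusion, while yours has the advantage of producing the actual K\"ahler form rather than just its cohomology class.
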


\begin{proof}
The $T^2(\bfp)$ action on $S^3\times S^3$ splits as a weighted $S^1$ action on each factor. Setting $k=\gcd(p_1,p_2)$ and $l=\gcd(p_3,p_4)$ we see after reparameterizing that the quotient of the first factor is $\bbc\bbp(\bp_1,\bp_2)$, and similarly for the second factor. Note that $\bfp\in \cala$ implies $\gcd(k,l)=1.$

We have an exact sequence of groups 
$$0\ra{1.5}T(\bfp)\ra{1.5}T^2(\bfp)\ra{1.5} S^1(R_\bfp)\ra{1.5} 0$$
where $S^1(R_\bfp)$ is the circle generated by the Reeb vector field $R_\bfp$.
Thus, we have the commutative diagram
\begin{equation}\label{orbibranchcover2}
\xymatrix{ S^3\times S^3\ar[dd]^{\pi_1\times \pi_2}\ar[rd]^\grr & {}\\
&M_\bfp\, .\ar[ld]^\pi\\
\bbc\bbp(\bp_1,\bp_2)\times \bbc\bbp(\bp_3,\bp_4)&\\
 }\end{equation}
We want to determine the integral orbifold first Chern class (Euler class) of the $S^1$ orbibundle given by the southwest arrow. That is, we look for the class $a\gra+b\grb\in H^2_{orb}(\bbc\bbp(\bp_1,\bp_2)\times \bbc\bbp(\bp_3,\bp_4),\bbz)$ which transcends to the zero class on $M_\bfp$ where $\gra$ and $\grb$ are primitive classes in each factor. (See Chapter 4 of \cite{BG05} for a discussion of these orbifold classes.) For this we take the Bochner-flat K\"ahler metrics on the weighted projective spaces as described in \cite{Bry01,DaGa06,Gau09} whose area is $2$. We denote the corresponding K\"ahler forms by $\gro_{\bp_1,\bp_2}$ and $\gro_{\bp_3,\bp_4}$, so that $\gra=[\gro_{\bp_1,\bp_2}]$, and $\grb=[\gro_{\bp_3,\bp_4}]$. Now according to the action (\ref{s1action}) the circle wraps around $k$ times on the first factor and $l$ times with the reverse orientation on the second. So if we take the K\"ahler form to be
\begin{equation}\label{grocpcp}
\gro_\bfp=l\gro_{\bp_1,\bp_2}+k\gro_{\bp_3,\bp_4},
\end{equation}
its class pulls back to zero under $\pi$, since $\pi^*[\gro_{\bp_1,\bp_2}]=k\grg$ and $\pi^*[\gro_{\bp_3,\bp_4}]=-l\grg$ where $\grg$ is a generator of $H_2(M_\bfp,\bbz)\approx \bbz$. \end{proof}

Under these conditions we have

\begin{theorem}\label{quotthm}
$M_\bfp$ is diffeomorphic to $S^2\times S^3$ if $p_1+p_2-p_3-p_4$ is even, and diffeomorphic to $X_\infty$, the non-trivial $S^3$-bundle over $S^2$, if $p_1+p_2-p_3-p_4$ is odd.
\end{theorem}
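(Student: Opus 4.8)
The plan is to identify $M_\bfp$ up to diffeomorphism through the Smale--Barden classification of simply connected $5$-manifolds, so that everything reduces to computing a single $\bbz_2$-valued invariant, namely $w_2(M_\bfp)$.

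First I would pin down the homotopy type of $M_\bfp$ in low degrees. Since $\bfp\in\cala$, the group $T(\bfp)$ acts freely on $\mu_{\eta_0}^{-1}(0)\approx S^3\times S^3$, so $\grr\colon S^3\times S^3\ra{1.6} M_\bfp$ is a principal $S^1$-bundle and $M_\bfp$ is a closed smooth $5$-manifold. Its long exact homotopy sequence, together with $\pi_1(S^3\times S^3)=\pi_2(S^3\times S^3)=0$, gives $\pi_1(M_\bfp)=0$ and $\pi_2(M_\bfp)\cong\pi_1(S^1)\cong\bbz$; this is of course Lerman's $\pi_1(M)=\pi_0(T)$, $\pi_2(M)=\pi_1(T)$. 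By Hurewicz, $H_2(M_\bfp,\bbz)\cong\bbz$, and by Poincar\'e duality $M_\bfp$ has no homology torsion. Barden's classification then tells us that a closed simply connected $5$-manifold with $H_2\cong\bbz$ and no torsion is diffeomorphic to exactly one of the two $S^3$-bundles over $S^2$: the trivial bundle $S^2\times S^3$, which is spin, and the non-trivial bundle $X_\infty$, which is not; the two are told apart precisely by $w_2\in H^2(M_\bfp,\bbz_2)\cong\bbz_2$. So the theorem amounts to showing $w_2(M_\bfp)\equiv(p_1+p_2-p_3-p_4)\,\grg\pmod 2$, where $\grg$ generates $H^2(M_\bfp,\bbz)$.

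To compute $w_2(M_\bfp)$ I would use the circle orbibundle $\pi\colon M_\bfp\ra{1.6}\mcalz$ of Lemma~\ref{t2quot}, with $\mcalz=\bbc\bbp(\bp_1,\bp_2)\times\bbc\bbp(\bp_3,\bp_4)$. As $M_\bfp$ is a smooth manifold and the Reeb foliation is quasi-regular, the pullback of the orbifold tangent bundle is an honest vector bundle on $M_\bfp$ and $TM_\bfp\cong\pi^*T\mcalz\oplus\underline{\bbr}$, the trivial summand being the (nowhere vanishing) Reeb direction tangent to the fibre. Since $T\mcalz$ is a complex orbi-bundle, $w_2(M_\bfp)=\pi^*\bigl(c_1^{orb}(\mcalz)\bmod 2\bigr)$. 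The orbifold Euler sequence on each weighted projective line (cf. Chapter~4 of \cite{BG05}) gives $c_1^{orb}(\bbc\bbp(\bp_1,\bp_2))=(\bp_1+\bp_2)\gra$ and $c_1^{orb}(\bbc\bbp(\bp_3,\bp_4))=(\bp_3+\bp_4)\grb$, up to a sign that is irrelevant mod~$2$, where $\gra,\grb$ are the primitive generators of the orbifold $H^2$ of the two factors as in Lemma~\ref{t2quot}. Feeding in the pullback formulas $\pi^*\gra=k\grg$ and $\pi^*\grb=-l\grg$ established in the proof of that lemma, together with $k\bp_i=p_i$ and $l\bp_j=p_j$, yields
\begin{equation*}
w_2(M_\bfp)\equiv\bigl((\bp_1+\bp_2)k-(\bp_3+\bp_4)l\bigr)\grg\equiv(p_1+p_2-p_3-p_4)\,\grg\pmod 2 .
\end{equation*}
As an independent cross-check one may instead use $w_2(M_\bfp)=w_2(\cald_\bfp)=c_1(\cald_\bfp)\bmod 2$, since $TM_\bfp=\cald_\bfp\oplus\bbr R_\bfp$ and $(\cald_\bfp,J)$ is complex, and recover the class $(p_1+p_2-p_3-p_4)\,\grg$ directly from the contact circle reduction of $S^7$ by $T(\bfp)$.

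Putting this together, $w_2(M_\bfp)=0$ exactly when $p_1+p_2-p_3-p_4$ is even, so $M_\bfp\cong S^2\times S^3$ in that case and $M_\bfp\cong X_\infty$ otherwise. The step that needs genuine care is the middle one: one must handle the orbifold cohomology correctly (via the Haefliger classifying space set up earlier) so that $\pi^*$ of orbifold characteristic classes is defined and $TM_\bfp\cong\pi^*T\mcalz\oplus\underline{\bbr}$ holds as stated, and one must fix the normalizations of $\gra,\grb$ so that the coefficients appearing in $c_1^{orb}(\mcalz)$ are exactly $\bp_1+\bp_2$ and $\bp_3+\bp_4$; the homotopy-sequence computation and the appeal to Barden's theorem are routine.
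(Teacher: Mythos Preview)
Your argument is correct and follows essentially the same route as the paper: reduce to the Smale--Barden classification via simple connectivity and $H_2\cong\bbz$, then identify $w_2(M_\bfp)$ with the mod~$2$ reduction of the pullback of $c_1^{orb}(\mcalz)$ using the same formulas $\pi^*\gra=k\grg$, $\pi^*\grb=-l\grg$ from Lemma~\ref{t2quot}. The only cosmetic difference is that the paper phrases the computation through $c_1(\cald_\bfp)$ (your ``cross-check'') and packages it as a separate Lemma~\ref{c_1D}, whereas you go via $TM_\bfp\cong\pi^*T\mcalz\oplus\underline{\bbr}$; since $\pi^*T\mcalz\cong\cald_\bfp$ as complex bundles, these are the same calculation.
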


\begin{proof} 
We know from the reduction procedure and Lemma \ref{circlered} that $M_\bfp$ is simply connected and $\pi_2(M_\bfp)=\bbz$. So by the Barden-Smale classification of simply connected 5-manifolds $M_\bfp$ is determined by its second Stiefel-Whitney class $w_2(M)$. Moreover, since $TM_\bfp$ splits as $\cald_\bfp$ plus a trivial line bundle, $w_2(M)$ is 
the mod 2 reduction of $c_1(\cald)$. So the theorem will follow immediately from the following lemma. 
\end{proof}

\begin{lemma}\label{c_1D}
The first Chern class of the contact bundle $\cald_\bfp=\ker\eta_\bfp$ on $M_\bfp$ is given by
$$c_1(\cald_\bfp)=(p_1+p_2-p_3-p_4)\grg$$ 
where $\grg$ is the positive generator of $H^2(M_\bfp,\bbz)\approx \bbz$.
\end{lemma}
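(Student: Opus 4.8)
The plan is to compute $c_1(\cald_\bfp)$ by tracking how Chern classes behave under the contact circle reduction described in the preceding subsection. The key structural fact is the commutative diagram \eqref{orbibranchcover2}: $M_\bfp$ is the total space of an $S^1$-orbibundle over $\Z_\bfp:=\bbc\bbp(\bp_1,\bp_2)\times \bbc\bbp(\bp_3,\bp_4)$ whose orbifold Euler class, by Lemma \ref{t2quot}, is $[\gro_\bfp]=l[\gro_{\bp_1,\bp_2}]+k[\gro_{\bp_3,\bp_4}]$. Since $\cald_\bfp$ is, up to a trivial summand, the pullback under the Boothby--Wang projection $\pi$ of the tangent bundle $T\Z_\bfp$ (equivalently, $c_1(\cald_\bfp)=\pi^*c_1^{orb}(\Z_\bfp)$ once we check that the vertical direction contributes nothing), the computation reduces to (i) writing down $c_1^{orb}(\Z_\bfp)$ in the orbifold cohomology of the product of weighted projective lines, and (ii) pushing that class through $\pi^*$, using the relations $\pi^*[\gro_{\bp_1,\bp_2}]=k\grg$ and $\pi^*[\gro_{\bp_3,\bp_4}]=-l\grg$ already established in the proof of Lemma \ref{t2quot}.

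First I would recall that for a weighted projective line $\bbc\bbp(\ba,\bb)$ the orbifold first Chern class is $c_1^{orb}=(\ba+\bb)\,x$, where $x$ is the positive generator of $H^2_{orb}(\bbc\bbp(\ba,\bb),\bbz)$ normalized so that the Bochner-flat K\"ahler form of area $2$ represents $x$; thus $c_1^{orb}(\bbc\bbp(\bp_1,\bp_2))=(\bp_1+\bp_2)[\gro_{\bp_1,\bp_2}]$ and similarly for the second factor. By the K\"unneth formula for orbifold cohomology (tensored with $\bbq$ this is ordinary K\"unneth, which suffices since everything here is torsion-free),
\[
c_1^{orb}(\Z_\bfp)=(\bp_1+\bp_2)[\gro_{\bp_1,\bp_2}]+(\bp_3+\bp_4)[\gro_{\bp_3,\bp_4}].
\]
Next I would verify that $c_1(\cald_\bfp)=\pi^*c_1^{orb}(\Z_\bfp)$. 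This is the standard fact for the (orbifold) Boothby--Wang construction: $TM_\bfp$ splits as $\cald_\bfp\oplus\bbr R_\bfp$, and $\cald_\bfp\cong\pi^*T\Z_\bfp$ as complex vector bundles because $\pi$ restricts to a complex-linear isomorphism on the horizontal distribution $\cald_\bfp$ onto $T\Z_\bfp$; the Euler class of the $S^1$-bundle only enters in the extension class of $TM_\bfp$, not in $c_1$ of the horizontal part. One should be a little careful that this is an honest equality of integral classes on $M_\bfp$ and not merely a rational one, but since $H^2(M_\bfp,\bbz)\cong\bbz$ is torsion-free and $\pi^*$ on the rational level already pins down the class, this is automatic once the rational identity is in hand.

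Combining, $c_1(\cald_\bfp)=\pi^*\big((\bp_1+\bp_2)[\gro_{\bp_1,\bp_2}]+(\bp_3+\bp_4)[\gro_{\bp_3,\bp_4}]\big)=(\bp_1+\bp_2)k\grg-(\bp_3+\bp_4)l\grg=(p_1+p_2-p_3-p_4)\grg$, using $(p_1,p_2)=k(\bp_1,\bp_2)$ and $(p_3,p_4)=l(\bp_3,\bp_4)$. This is the claimed formula. The main obstacle, and the step deserving the most care, is justifying $c_1(\cald_\bfp)=\pi^*c_1^{orb}(\Z_\bfp)$ cleanly in the orbifold setting---in particular making sure the orbifold first Chern classes and the normalizations of the Bochner-flat forms are consistent, and that passing to Haefliger's classifying space (as discussed in the Orbifolds subsection) does not introduce an ambiguity in the integral lift. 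An alternative, perhaps more robust, route that avoids the orbifold subtleties is to compute $c_1(\cald_\bfp)$ directly upstairs: the cone $C(M_\bfp)$ is the symplectic/K\"ahler reduction of $\bbc^4\setminus\{0\}$ by $T(\bfp)$, its canonical bundle is the reduction of $K_{\bbc^4}$ twisted by the character through which $T(\bfp)$ acts on the top exterior power, namely the character of weight $p_1+p_2-p_3-p_4$ under \eqref{s1action}, and $c_1(\cald_\bfp)=-c_1(K_{C(M_\bfp)})$ evaluated against $\grg$ gives the same answer. I would present the orbifold argument as the main line and mention this reduction-theoretic check as confirmation.
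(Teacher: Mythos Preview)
Your proposal is correct and follows essentially the same line as the paper's proof: both compute $c_1^{orb}(\calz_\bfp)=(\bp_1+\bp_2)[\gro_{\bp_1,\bp_2}]+(\bp_3+\bp_4)[\gro_{\bp_3,\bp_4}]$, both invoke the relations $\pi^*[\gro_{\bp_1,\bp_2}]=k\grg$ and $\pi^*[\gro_{\bp_3,\bp_4}]=-l\grg$ from Lemma~\ref{t2quot}, and both finish with the same arithmetic. The only presentational difference is in justifying $c_1(\cald_\bfp)=\pi^*c_1^{orb}(\calz_\bfp)$: you argue directly via the bundle isomorphism $\cald_\bfp\cong\pi^*T\calz_\bfp$, whereas the paper routes the argument through basic cohomology, identifying $\pi^*c_1^{orb}$ with the basic class $c_1(\calf_{R_\bfp})\in H^2_B(\calf_{R_\bfp})$ and then using the exact sequence $0\to H^0_B\xrightarrow{\delta}H^2_B\xrightarrow{\iota_*}H^2(M_\bfp,\bbr)$ with $\iota_*c_1(\calf_{R_\bfp})=c_1(\cald_\bfp)_\bbr$ to pass to ordinary cohomology; this makes explicit that one is really working modulo $[d\eta_\bfp]_B$, which is exactly the kernel you implicitly quotient by when applying $\pi^*$.
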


\begin{proof}
We begin by computing the orbifold first Chern class of 
$$\bbc\bbp(\bp_1,\bp_2)\times \bbc\bbp(\bp_3,\bp_4).$$ 
From \cite{BG05} we see that $p^*c_1^{orb}$ is given by 
\begin{equation}\label{c1cpcp}
(\bp_1+\bp_2)[\gro_{\bp_1,\bp_2}]+(\bp_3+\bp_4)[\gro_{\bp_3,\bp_4}]\in H^2_{orb}(\bbc\bbp(\bp_1,\bp_2)\times \bbc\bbp(\bp_3,\bp_4),\bbz) 
\end{equation}
which pulls back to the basic first Chern class $c_1(\calf_{R_\bfp})$ in the basic cohomology group $H^2_B(\calf_{R_\bfp})$ under the natural projection $\pi:M_\bfp\ra{1.6} \bbc\bbp(\bp_1,\bp_2)\times \bbc\bbp(\bp_3,\bp_4)$ by the circle action of $R_\bfp$.
Now we have an exact sequence \cite{BG05}
$$
\begin{matrix} &0& \\
                     &\decdnar{}& \\
          &H^{2}(M_\bfp,\bbz)&\\
          &\decdnar{}&\\
          0\ra{1.2} H^{0}_B(\calf_{R_\bfp})\fract{\grd}{\ra{1.5}}
H^{2}_B(\calf_{R_\bfp})\fract{\gri_*}{\ra{1.8}}&H^{2}(M_\bfp,\bbr)&\ra{1.3}\cdots
\end{matrix}
$$
with $\gri_*c_1(\calf_{R_\bfp})=c_1(\cald_\bfp)_\bbr$ and $\grd a=a[d\eta_\bfp]_B$. So $c_1(\cald_\bfp)_\bbr$ is $c_1(\calf_{R_\bfp})$ mod $[d\eta_\bfp]_B$ where $\eta_\bfp$ is the contact form on $M_\bfp$. Now since $\pi^*\gro_\bfp=d\eta_\bfp$, we know from the proof of Lemma \ref{t2quot} that $\pi^*[\gro_{\bp_1,\bp_2}]=k\grg$ and $\pi^*[\gro_{\bp_3,\bp_4}]=-l\grg$ holds over $\bbz$. Thus, since $\pi_1(M_\bfp)=\{\BOne\}$ we have over $\bbz$
\begin{flalign*}
c_1(\cald_\bfp) &=(\bp_1+\bp_2)\pi^*[\gro_{\bp_1,\bp_2}]+ (\bp_3+\bp_4)\pi^*[\gro_{\bp_3,\bp_4}] \\
&= k(\bp_1+\bp_2)\grg-l(\bp_3+\bp_4)\grg=(p_1+p_2-p_3-p_4)\grg.
\end{flalign*}
This completes the proof of the lemma.
\end{proof}

\begin{corollary}\label{s3s2cor}
Every toric contact structure on an $S^3$-bundle over $S^2$ can be realized as an orbifold fibration over $\bbc\bbp(\bp_1,\bp_2)\times \bbc\bbp(\bp_3,\bp_4)$ for some quadruple of positive integers $(p_1,p_2,p_3,p_4)$ satisfying $\gcd(p_i,p_j)=1$ for $i=1,2$ and $j=3,4$ and $(p_1,p_2,p_3,p_4)=(k\bp_1,k\bp_2,l\bp_3,l\bp_4)$. Thus, every toric contact structure on an $S^3$-bundle over $S^2$ has a positive Sasakian structure in its Sasaki cone.
\end{corollary}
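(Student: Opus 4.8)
The plan is to read the corollary off the contact--circle--reduction picture built in this section, producing the orbifold fibration from the diagram~(\ref{orbibranchcover2}) and the positivity from the fact that the base is Fano.

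\emph{Step 1: pin down $M$ as an $M_\bfp$ with $\bfp$ admissible.} By Lemma~\ref{circlered}, a toric contact structure $(M,\cald)$ on an $S^3$-bundle over $S^2$ is a contact--circle reduction of the standard $(S^7,\eta_0)$, hence equivariantly equivalent to $(M_\bfa,\cald_\bfa)$ for the reduction by some circle subgroup $T(\bfa)\subset T^4$ with weight vector $\bfa=(a_1,a_2,a_3,a_4)$, $\gcd(a_i)=1$. The $a_i$ cannot all have the same sign, since otherwise the moment level set meets $S^7$ only at the cone point and $M_\bfa$ is empty; using the obvious equivalences (the interchanges $z_1\leftrightarrow z_2$, $z_3\leftrightarrow z_4$, the swap of the two pairs, and replacing the generator of $T(\bfa)$ by its inverse) we may arrange the sign pattern to be $(+,+,-,-)$ or $(+,+,+,-)$. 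In the latter pattern the zero level set is not $S^3\times S^3$; a short computation with Lerman's formulas $\pi_1(M_\bfa)=\pi_0(T(\bfa))$, $\pi_2(M_\bfa)=\pi_1(T(\bfa))$ together with the structure of the zero set shows the orbit of $T(\bfa)$ generates $\pi_1$ of that set, so $\pi_2(M_\bfa)=0$ and $M_\bfa\cong S^5$ by Barden--Smale -- not an $S^3$-bundle over $S^2$. Hence $\bfa$ is of the form $\bfp=(p_1,p_2,p_3,p_4)$, $p_i\in\bbz^+$, with action~(\ref{s1action}); freeness of $T(\bfp)$ on $\mu_{\eta_0}^{-1}(0)\cong S^3\times S^3$ then forces $\gcd(p_i,p_j)=1$ for $i\in\{1,2\}$, $j\in\{3,4\}$, i.e.\ $\bfp$ is admissible (Definition~\ref{admis}), and we set $k=\gcd(p_1,p_2)$, $l=\gcd(p_3,p_4)$, $(p_1,p_2)=k(\bp_1,\bp_2)$, $(p_3,p_4)=l(\bp_3,\bp_4)$.

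\emph{Step 2: the orbifold fibration.} Using the exact sequence $0\to T(\bfp)\to T^2(\bfp)\to S^1(R_\bfp)\to 0$ and the commutative diagram~(\ref{orbibranchcover2}) from the proof of Lemma~\ref{t2quot}, the residual circle action of $S^1(R_\bfp)$ on $M_\bfp=(S^3\times S^3)/T(\bfp)$ has quotient $\bbc\bbp(\bp_1,\bp_2)\times\bbc\bbp(\bp_3,\bp_4)$; admissibility makes this action locally free, so $\pi\colon M_\bfp\to\bbc\bbp(\bp_1,\bp_2)\times\bbc\bbp(\bp_3,\bp_4)$ is an $S^1$-orbibundle, which is precisely the asserted orbifold fibration, with $\pi^*\gro_\bfp=d\eta_\bfp$ and $\gro_\bfp$ as in~(\ref{grocpcp}). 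Equivalently, this is the orbifold Boothby--Wang fibration attached to the quasi-regular contact form $\eta_\bfp$ with Reeb field $R_\bfp\in\gt$.

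\emph{Step 3: positivity.} The circle reduction of the K\"ahler cone $\bbc^4\setminus\{0\}$ equips $M_\bfp$ with a quasi-regular Sasakian structure $\cals_\bfp=(R_\bfp,\eta_\bfp,\Phi,g)$ whose transverse K\"ahler structure is the projective-algebraic orbifold $\bbc\bbp(\bp_1,\bp_2)\times\bbc\bbp(\bp_3,\bp_4)$. By (the proof of) Lemma~\ref{c_1D}, the basic first Chern class $c_1(\calf_{R_\bfp})$ is the pullback under $\pi$ of the orbifold first Chern class of the base, which by~(\ref{c1cpcp}) equals $(\bp_1+\bp_2)[\gro_{\bp_1,\bp_2}]+(\bp_3+\bp_4)[\gro_{\bp_3,\bp_4}]$. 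Since each $\bp_i>0$ and $[\gro_{\bp_1,\bp_2}],[\gro_{\bp_3,\bp_4}]$ are K\"ahler classes, this is a K\"ahler (hence positive definite $(1,1)$) class on the product -- the product of weighted projective spaces is a Fano orbifold -- so $c_1(\calf_{R_\bfp})$ is represented by a positive definite $(1,1)$-form and $\cals_\bfp$ is a positive Sasakian structure. As $R_\bfp\in\gt\subset\gc\gr(\cald_\bfp,J)$ and $\eta_\bfp(R_\bfp)=1>0$, it lies in $\gt^+(\cald_\bfp,J)$, so the Sasaki cone of $(M,\cald)$ contains this positive Sasakian structure.

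The main obstacle is Step~1: showing that \emph{every} toric contact structure on an $S^3$-bundle over $S^2$ is exhausted by the admissible quadruples, i.e.\ that circle reductions with a $(+,+,+,-)$ weight pattern contribute nothing new on these manifolds. This is the one place where genuine input is needed -- Lemma~\ref{circlered} combined with the $\pi_2$/second Stiefel--Whitney computation behind Theorem~\ref{quotthm} -- whereas once $M$ is identified as $M_\bfp$ for admissible $\bfp$, Steps~2 and~3 are direct applications of the reduction diagram~(\ref{orbibranchcover2}) and Lemma~\ref{c_1D}.
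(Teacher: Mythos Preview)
Your argument is correct and matches the paper's approach: Lemma~\ref{circlered} together with Lemma~\ref{t2quot} and diagram~(\ref{orbibranchcover2}) give the orbifold fibration, and the base being log del Pezzo gives the positive Sasakian structure. You go beyond the paper's two-line proof in Step~1 by explicitly excluding the $(+,+,+,-)$ weight pattern; one small fix there: the $T(\bfa)$-orbit maps into $\pi_1(S^5\times S^1)\cong\bbz$ by multiplication by the lone negative weight, so it \emph{injects} rather than necessarily \emph{generates}, but injectivity already forces $\pi_2(M_\bfa)=0$ in the long exact sequence, and either $\pi_2=0$ or the resulting nontrivial $\pi_1$ rules out an $S^3$-bundle over $S^2$.
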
 

\begin{proof}
The first statement follows by combining Lemma \ref{circlered} with the proof above, and the second statement follows since the base orbifold $\bbc\bbp(\bp_1,\bp_2)\times \bbc\bbp(\bp_3,\bp_4)$ is log del Pezzo.
\end{proof}

We also have a characterization of the Sasaki cone. 
\begin{lemma}\label{Sasconered}
The (unreduced) Sasaki cone $\gt_3^+(\bfp)$ is given by
$$\gt_3^+(\bfp)=\{R\in \gt_3(\bfp)~|~ R=\sum_{i=1}^4a_i\varpi(H_i)~|~p_ia_j+p_ja_i>0~\forall i=1,2;~j=3,4\}.$$
\end{lemma}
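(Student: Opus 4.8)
The plan is to combine the contact reduction description of $M_\bfp$ (Lemma \ref{circlered} and the set-up around (\ref{commquot})--(\ref{s3s3})) with the characterization of the Sasaki cone recalled earlier in the paper. Since we may choose the transverse complex structure $J$ on $\cald_\bfp$ to be $T_3(\bfp)$-invariant, $\gt_3(\bfp)$ lies inside $\gc\gr(\cald_\bfp,J)$, and by the definition of $\gc\gr^+$ (which is independent of the representative contact form) we have $\gt_3^+(\bfp)=\gt_3(\bfp)\cap\gc\gr^+(\cald_\bfp,J)=\{R\in\gt_3(\bfp)\mid \eta_\bfp(R)>0 \text{ everywhere on } M_\bfp\}$. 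So the lemma amounts to writing the function $\eta_\bfp(R)$ down explicitly and deciding when it is strictly positive.

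First I would lift to $S^3\times S^3$. Given $R\in\gt_3(\bfp)$, use the exact sequence (\ref{Liealgseq}) to write $R=\varpi\bigl(\sum_{i=1}^4 a_iH_i\bigr)$ for some real numbers $a_i$, unique modulo adding a multiple of the generator $L_\bfp=(p_1,p_2,-p_3,-p_4)$ of $\ker\varpi$. The vector field $V=\sum_i a_iH_i$ on $\bbc^4\setminus\{0\}$ commutes with the $T(\bfp)$-action and is tangent to $\mu_{\eta_0}^{-1}(0)\approx S^3\times S^3$, because the defining equations (\ref{s3s3}) are invariant under the full torus $T^4$; hence $V$ descends under $\grr:\mu_{\eta_0}^{-1}(0)\to M_\bfp$ precisely to $R$. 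From $\grr^*\eta_\bfp=\gri^*\eta_0$ and the elementary identity $\eta_0(H_k)=|z_k|^2$ one obtains $\eta_\bfp(R)\circ\grr=\sum_{i=1}^4 a_i|z_i|^2$ restricted to $S^3\times S^3$. Writing $t_i=|z_i|^2\ge 0$, the condition $R\in\gt_3^+(\bfp)$ therefore becomes: the linear functional $(t_1,t_2,t_3,t_4)\mapsto\sum_i a_it_i$ is strictly positive on the compact polytope $P=\{t_i\ge 0,\ p_1t_1+p_2t_2=\tfrac12,\ p_3t_3+p_4t_4=\tfrac12\}$.

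Next I would note that $P$ is the product of two line segments, so its vertices are the four points obtained by setting one of $t_1,t_2$ and one of $t_3,t_4$ to $0$; for example $t_1=\tfrac1{2p_1},\,t_3=\tfrac1{2p_3},\,t_2=t_4=0$, where the functional takes the value $\frac{p_3a_1+p_1a_3}{2p_1p_3}$. Since a linear functional on a polytope attains its minimum at a vertex, strict positivity on all of $P$ is equivalent to strict positivity at each of the four vertices, that is, to $p_ia_j+p_ja_i>0$ for all $i\in\{1,2\}$ and $j\in\{3,4\}$, which is exactly the asserted description of $\gt_3^+(\bfp)$. I would close by recording well-definedness: replacing $(a_1,a_2,a_3,a_4)$ by $(a_1+tp_1,a_2+tp_2,a_3-tp_3,a_4-tp_4)$ leaves each quantity $p_ia_j+p_ja_i$ (with $i\in\{1,2\}$, $j\in\{3,4\}$) unchanged, so the inequalities depend only on $R$, not on the chosen lift.

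The step requiring the most care is not any of these computations but the opening identification $\gt_3^+(\bfp)=\{R\in\gt_3(\bfp)\mid\eta_\bfp(R)>0\}$: one must be sure that $T_3(\bfp)$ genuinely acts by automorphisms of a fixed almost CR structure $(\cald_\bfp,J)$, so that $\gt_3(\bfp)\subset\gc\gr(\cald_\bfp,J)$, and that $\gc\gr^+$ is cut out by the single inequality $\eta(X)>0$ independently of the choice of $\eta\in\gC^+(\cald_\bfp)$ --- both facts are recalled in the subsection on the Sasaki cone. Granting these, the remaining content is just the elementary convexity argument together with the explicit moment maps from the reduction.
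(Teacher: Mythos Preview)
Your proof is correct and follows essentially the same route as the paper: both identify $\gt_3^+(\bfp)$ with the positivity of $\eta_0(R)=\sum_i a_i|z_i|^2$ on the level set (\ref{s3s3}) and reduce to checking the four corners. The paper eliminates $|z_2|^2,|z_4|^2$ via the constraints and writes the expression as an affine function of $|z_1|^2,|z_3|^2$, whereas you phrase it as minimizing a linear functional over a product of segments; your version additionally makes explicit the invariance of the inequalities under the ambiguity $a\mapsto a+tL_\bfp$, which the paper leaves implicit.
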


\begin{proof}
The Sasaki cone is determined by the positivity condition $\eta_0(R)>0$ restricted to the zero level set of the $T(\bfp)$ moment map. Using Equation (\ref{s3s3}) this gives 
$$p_4(p_2a_1-a_2p_1)|z_1|^2+p_2(p_4a_3-a_4p_3)|z_3|^2+\frac{a_2p_4+a_4p_2}{2}>0$$
from which we obtain the result.
\end{proof}

It is easy to see that the argument in \cite{Ler03b} can be generalized to give
\begin{proposition}\label{Dpc1}
As a complex vector bundle $\cald_\bfp$ is determined uniquely by $p_1+p_2-p_3-p_4$.
\end{proposition}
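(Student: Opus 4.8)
The plan is to reduce the statement to the classification of complex vector bundles over an $S^3$-bundle over $S^2$, and then to invoke the computation of $c_1(\cald_\bfp)$ already carried out in Lemma~\ref{c_1D}. First I would recall that $M_\bfp$ is a simply connected $5$-manifold with $H^2(M_\bfp,\bbz)\cong\bbz$ and $H^4(M_\bfp,\bbz)\cong\bbz$ (the latter from Poincar\'e duality, together with the fact that $M_\bfp$ is an $S^3$-bundle over $S^2$, so its cohomology is that of $S^2\times S^3$ or of $X_\infty$ as groups). A rank-$2$ complex vector bundle $E$ over a CW complex of dimension $\le 5$ is classified by its pullback to the $4$-skeleton together with a secondary obstruction living in $H^5$, but here $H^5(M_\bfp,\bbz)=0$ since $M_\bfp$ is a closed orientable $5$-manifold with $H^1=0$ (so $H^5\cong H_0$ is $\bbz$ — one must be slightly careful here, $H^5(M_\bfp,\bbz)\cong\bbz$). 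So the argument of Lerman~\cite{Ler03b} is precisely the observation that over such a manifold a rank-$2$ complex bundle is determined by $c_1\in H^2$ and $c_2\in H^4$, and that for the specific bundles $\cald_\bfp$ arising by contact reduction the class $c_2$ is itself determined by $c_1$.

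The key steps, in order, are: (1) invoke the obstruction-theoretic classification of rank-$2$ complex vector bundles over a $5$-dimensional CW complex $X$ with $H^1(X,\bbz)=0$: such a bundle is classified up to isomorphism by the pair $(c_1(E),c_2(E))\in H^2(X,\bbz)\oplus H^4(X,\bbz)$, provided the relevant higher obstruction group (a quotient of $H^5(X;\pi_4)$-type data) vanishes or is accounted for — this is exactly what Lerman checks for $S^3$-bundles over $S^2$. (2) Specialize $X=M_\bfp$, where $H^2\cong\bbz\langle\grg\rangle$ and $H^4\cong\bbz$, and observe $\cald_\bfp$ has rank $2$ over $\bbc$. (3) Compute $c_2(\cald_\bfp)$ and show it is a function of $p_1+p_2-p_3-p_4$ alone. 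Since $\cald_\bfp$ is the pullback of a sum of line orbibundles on $\bbc\bbp(\bp_1,\bp_2)\times\bbc\bbp(\bp_3,\bp_4)$ to $M_\bfp$, and $H^4(M_\bfp,\bbz)$ is generated by (essentially) the square of $\grg$ pulled back appropriately, one uses the multiplicative structure: $c_2$ of a rank-$2$ bundle pulled back from a base via a map that kills the relevant product classes will vanish, or more precisely $c_2(\cald_\bfp)$ is a multiple of $\grg^2$, and $\grg^2$ pairs with the fundamental class in a controlled way. The cleanest route is: the Whitney sum $\cald_\bfp\oplus\underline{\bbc}$ is the restriction of $TC(M_\bfp)$ split appropriately, whose total Chern class is pulled back from the cone, and on $M_\bfp$ all cup products of classes pulled back from the $2$-dimensional base vanish, forcing $c_2(\cald_\bfp)=0$ identically. (4) Combine (1)--(3) with Lemma~\ref{c_1D}, which gives $c_1(\cald_\bfp)=(p_1+p_2-p_3-p_4)\grg$, to conclude that $\cald_\bfp$ as a complex bundle depends only on this integer.

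The main obstacle, I expect, is step (1): pinning down exactly which higher obstruction group one needs and verifying it causes no trouble for $S^3$-bundles over $S^2$. For rank-$2$ complex bundles the classifying space is $BU(2)$, whose homotopy groups are $\pi_2=\bbz$, $\pi_4=\bbz$, $\pi_3=0$, $\pi_5=0$, and over a $5$-complex the bundle is determined by maps into the $5$-skeleton of $BU(2)$; the obstruction to two bundles with the same $c_1,c_2$ being isomorphic lies in $H^5(X;\pi_5(BU(2)))=H^5(X;0)=0$ once $H^2$ and $H^4$ classes are matched, but one must be careful that the intermediate lifting at the level of $\pi_4$ is genuinely governed by $c_2$ and not by a twisted coefficient system — which is where $H^1(X,\bbz)=0$ (simple connectivity of $M_\bfp$) is used. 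Since $M_\bfp$ is simply connected by Lemma~\ref{circlered}, this is exactly the hypothesis that makes Lerman's argument go through verbatim. Hence the generalization of~\cite{Ler03b} is essentially formal once one observes $M_\bfp$ shares the relevant properties (simply connected $S^3$-bundle over $S^2$) with the manifolds treated there, and the only genuine input is the Chern class computation of Lemma~\ref{c_1D} together with the vanishing of $c_2(\cald_\bfp)$.
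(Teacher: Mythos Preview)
The paper itself gives no proof beyond the sentence ``It is easy to see that the argument in \cite{Ler03b} can be generalized,'' so there is nothing to compare against in detail. Your overall strategy---classify rank-$2$ complex bundles over $M_\bfp$ by obstruction theory, then invoke Lemma~\ref{c_1D}---is the natural one and is almost certainly what Lerman does. However two of your homotopy-theoretic inputs are wrong, and one of them creates a real gap.

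First, $H^4(M_\bfp,\bbz)=0$, not $\bbz$: by Poincar\'e duality $H^4\cong H_1=0$ since $M_\bfp$ is simply connected (or compute directly from K\"unneth for $S^2\times S^3$). So $c_2(\cald_\bfp)$ vanishes for free and your step~(3) is unnecessary. This is harmless, just wasted effort.

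Second, and this is the real issue, $\pi_5(BU(2))=\pi_4(U(2))=\pi_4(SU(2))=\pi_4(S^3)=\bbz/2$, not $0$ as you assert. Hence the top obstruction group $H^5(M_\bfp;\pi_5(BU(2)))\cong\bbz/2$ does \emph{not} vanish, and rank-$2$ complex bundles over a simply connected $5$-manifold are not in general determined by $(c_1,c_2)$ alone---already on $S^5$ there are exactly two such bundles, both with trivial Chern classes. So your argument fails precisely at the step you correctly flag as the main obstacle. To close the gap you need either an argument specific to $S^3$-bundles over $S^2$ showing this $\bbz/2$ does not separate bundles with equal $c_1$, or an argument specific to the $\cald_\bfp$ themselves (for instance, exploiting that $\cald_\bfp\cong\pi^*T\calz$ splits as a sum of complex line bundles pulled back from the orbifold base, and line bundles on $M_\bfp$ are classified by $H^2$) that bypasses the general classification entirely. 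Lerman's paper presumably supplies one of these; it is worth consulting directly rather than reconstructing the obstruction theory from scratch.
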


Let $A(M_\bfp)$ denote the set of homotopy classes of almost contact structures on $M_\bfp$. This corresponds precisely with the homotopy classes of almost complex structures on the cone $C(M_\bfp)$ \cite{Sat77}, so $c_1(\cald)$ is an invariant of the homotopy class of almost contact structures. 

\subsection{Contact Homology for toric contact $5$-manifolds}

Let us consider the differential graded algebra (DGA) discussed above.  We start with the set of critical points of a 
Morse function as picked earlier.  Since we are working with toric manifolds of Reeb type in dimension $5$ we actually know that the fixed points of 
the $T^3$-action are isolated, hence the norm squared of the symplectic moment map on $\calz$ is a perfect Morse function.

We are interested in the orbit structure of the $T^3(\bfp)$ action on $M_\bfp$.

\begin{lemma}\label{fixReeb}
Consider the toric contact structure $\cald_\bfp$ on $M_\bfp$, an $S^3$-bundle over $S^2$. Then there are exactly four 1-dimensional orbits under the action of $T^3(\bfp)$, and they are closed. Moreover, these four orbits are Reeb orbits for all Reeb fields in the Sasaki cone $\gt_3^+(\bfp)$, as well as for a Reeb vector field in $\gc\go\gn(M_\bfp,\eta_\bfp)$ that is arbitrarily close to one in the Sasaki cone. Moreover, for a generic such Reeb vector field these are the only closed orbits.
 \end{lemma}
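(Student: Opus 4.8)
The plan is to exploit the explicit toric picture developed in Lemma \ref{t2quot} and Theorem \ref{quotthm}. Recall that $M_\bfp = (S^3\times S^3)/T(\bfp)$ fibers over $\bbc\bbp(\bp_1,\bp_2)\times\bbc\bbp(\bp_3,\bp_4)$ via the circle action generated by $R_\bfp$. First I would identify the $T^3(\bfp)$-orbit structure on $M_\bfp$ with the $T^2$-orbit structure on the base orbifold $\calz=\bbc\bbp(\bp_1,\bp_2)\times\bbc\bbp(\bp_3,\bp_4)$: a point of $M_\bfp$ has a $k$-dimensional orbit under $T^3(\bfp)$ if and only if its image in $\calz$ has a $(k-1)$-dimensional orbit under the residual $T^2$ action, since the $R_\bfp$-circle acts locally freely (indeed freely, after the reduction by $T(\bfp)$) along the fibers. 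The $T^2$ action on each $\bbc\bbp(\bp_i,\bp_j)$ has exactly two fixed points (the two "vertices" $[1:0]$ and $[0:1]$), so the product has exactly $4$ fixed points; pulling back via $\pi$, each fixed point gives a single circle $=$ fiber, which is a closed $1$-dimensional $T^3(\bfp)$-orbit. This produces exactly four closed $1$-dimensional orbits, and they are automatically periodic Reeb orbits for $R_\bfp$ because they are the $R_\bfp$-fibers themselves; more generally any $R\in\gt_3^+(\bfp)$ is a linear combination of the $H_i$ that is tangent to these orbits (the orbit is a $T^3(\bfp)$-orbit, hence invariant under the flow of every element of $\gt_3(\bfp)$), so each of the four circles is a closed orbit of every Reeb field in the Sasaki cone.

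Next I would address the perturbed case. Given a Reeb vector field $R$ in the Sasaki cone, Lemma \ref{hyperplane} tells us the image of the contact moment map is a simple convex polytope $P_R = H_R\cap C(\Upsilon)$, and the orbit-type stratification of $M_\bfp$ corresponds to the face stratification of $P_R$: the four closed $1$-dimensional orbits correspond to the four vertices of $P_R$. A Reeb field $R'\in\gc\go\gn(M_\bfp,\eta_\bfp)$ that is $C^\infty$-close to $R$ but not in the torus $\gt_3$ still leaves the four closed orbits invariant provided $R'$ lies in the (abelian, hence $T^3(\bfp)$-fixed) span containing the orbit directions — but in fact the cleaner route, matching the Morse-Bott setup of Section 3, is: perturb $\eta_\bfp$ to $\eta_f = (1+\epsilon f)\eta_\bfp$ with $f$ the norm-squared of the moment map (a perfect Morse function on $\calz$ with exactly the four fixed points as critical points, by the toric symplectic geometry of $\calz$). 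By Equation (\ref{eq:mbform}) and the discussion following it, for almost all small $\epsilon$ the closed Reeb orbits of $\eta_f$ of bounded action are exactly the fibers over critical points of $f$ — i.e. exactly the four closed orbits above (the unbounded-action ones are iterates). This shows that for a generic nearby Reeb field these four are the only closed orbits, proving the last sentence.

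The main obstacle I anticipate is the last clause — showing these are the \emph{only} closed orbits for a generic Reeb field, and in particular handling the multiply-covered / higher-action orbits and the subtlety that $f$ must be Morse on each orbit stratum simultaneously, not merely on $\calz$. One must verify that the norm-squared moment map on the product of weighted projective lines really is a perfect Morse function with precisely four critical points (the $T^2$-fixed points), and that the residual $1$-dimensional strata (the "edges" of $P_R$, i.e. the $\bbc\bbp^1$-factors) contribute no new \emph{closed} orbits after perturbation because along them the Reeb flow is already periodic and the bump-function construction of Section 3 isolates only the critical points of $f$. I would therefore spend most of the argument on this Morse-theoretic bookkeeping, citing \cite{Bou02t} for the fact that the perturbed contact form has isolated closed orbits in one-to-one correspondence with critical points of $f$ on the orbit spaces, and the standard toric computation that $\mathrm{Crit}(|\mu|^2)$ on $\calz$ consists of the $T^2$-fixed points together with lower critical submanifolds that, being the $\bbc\bbp^1$-strata, carry their own Morse functions whose critical points are again among the four vertices.
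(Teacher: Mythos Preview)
Your proposal is correct and reaches the same conclusion as the paper, but the organization differs slightly. The paper's own proof works directly upstairs on the level set $S^3\times S^3\subset S^7$: using the exact sequence $\{0\}\to T(\bfp)\to T^4\to T^3(\bfp)\to\{0\}$, it argues in coordinates that a $T^3(\bfp)$-orbit is one-dimensional precisely when three of the four coordinates $z_i$ vanish, subject to the constraint (\ref{s3s3}) that $(z_1,z_2)\neq(0,0)$ and $(z_3,z_4)\neq(0,0)$; this immediately gives the four orbits. You instead pass to the quotient $\calz=\bbc\bbp(\bp_1,\bp_2)\times\bbc\bbp(\bp_3,\bp_4)$ and identify $1$-dimensional $T^3(\bfp)$-orbits on $M_\bfp$ with $T^2$-fixed points on $\calz$. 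These are two views of the same fact, and the paper in fact adopts your downstairs viewpoint in the very next result, Proposition \ref{momMorse}, where it records that the four critical points of $|\mu_2|^2$ on $\calz$ are exactly $[1,0]\times[1,0]$, $[1,0]\times[0,1]$, $[0,1]\times[1,0]$, $[0,1]\times[0,1]$ and correspond to the four Reeb orbits of the lemma. So what you are really doing is merging the proofs of Lemma \ref{fixReeb} and Proposition \ref{momMorse} into one argument, which is perfectly fine.

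For the final clause, both you and the paper invoke Bourgeois \cite{Bou02t}: the paper simply says ``since every Reeb vector field in the Sasaki cone is arbitrarily close to a quasi-regular one, the last statement follows from a result of Bourgeois,'' while you spell out the perturbation $\eta_f=(1+\epsilon f)\eta_\bfp$ with $f=|\mu_2|^2$. Your anticipated obstacle about $|\mu|^2$ being perfect Morse with exactly four critical points is a non-issue here: it is exactly the content of Proposition \ref{momMorse}, and the extra strata (the edges of the moment polytope) contribute no new critical points because the $T^2$-fixed points are isolated on the product of weighted projective lines.
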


\begin{proof}
We have an exact sequence of groups,
$$\{0\}\ra{1.8} T(\bfp)\ra{1.8}T^4\ra{1.8} T^3(\bfp)\ra{1.8} \{0\},$$
and we consider the action of $T^3(\bfp)$ on the level set given by Equation (\ref{s3s3}) thought of as $T^4/T(\bfp)$. If $z_1\neq 0$, then we can choose $\theta =\theta_1$ of the standard $T^4$ angles. The remaining $T^3$ orbit will be 1-dimensional only if $z_2=0$ and one of the $z_3$ or $z_4$ is zero. This gives two closed $S^1$-orbits. On the other hand if $z_1=0$, then we must have $z_2\neq 0$, so we choose $\theta=\theta_2$, and as above this gives exactly the two close orbits with either $z_3$ or $z_4$ vanishing. Clearly, any Reeb vector field in $\gt_3^+$ leaves these Reeb orbits invariant, and since every Reeb vector field in the Sasaki cone is arbitrarily close to a quasi-regular one, the last statement follows from a result of Bourgeois \cite{Bou02t}.
\end{proof}

Let $\gt_2(\bfp)$ denote the Lie algebra of $T^2(\bfp)$. It is generated by the two vector fields $L_\bfp,R_\bfp$. We have an exact sequence of Lie algebras
\begin{equation}\label{laexact}
\{0\}\ra{1.8} \gt_2(\bfp)\ra{1.8} \gt_4\fract{\grr}{\ra{1.8}} \gg_2(\bfp)\ra{1.8} \{0\},
\end{equation}
where $\gg_2(\bfp)$ is generated by the vector fields $\bar{H}_1=\grr(H_1),\bar{H}_3=\grr(H_3)$.
We have a toric symplectic orbifold 
\begin{equation}\label{symtororb}
(\bbc\bbp(\bp_1,\bp_2)\times \bbc\bbp(\bp_3,\bp_4),\gro_\bfp)
\end{equation}
where the symplectic form is given by Equation (\ref{grocpcp}), and the torus $\gG_2(\bfp)$ is generated by the Lie algebra $\gg_2(\bfp)$. The moment map $\mu_2:\bbc\bbp(\bp_1,\bp_2)\times \bbc\bbp(\bp_3,\bp_4) \ra{1.6} \gg_2(\bfp)^*$ is given by $\mu_2(\bfz)=(|z_1|^2,|z_3|^2)$.

\begin{proposition}\label{momMorse}
The function $f=|\mu_2|^2$ is a perfect Morse function on the quotient $M_\bfp/S^1\approx\bbc\bbp(\bp_1,\bp_2)\times \bbc\bbp(\bp_3,\bp_4)$ whose critical points are precisely the four Reeb orbits of Lemma \ref{fixReeb}.
\end{proposition}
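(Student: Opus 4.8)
The plan is to exploit the product structure of the quotient orbifold. Write $\calz = \bbc\bbp(\bp_1,\bp_2)\times\bbc\bbp(\bp_3,\bp_4)$, and recall that $f=|\mu_2|^2 = |z_1|^4+|z_3|^4$ in the homogeneous coordinates normalized by the constraints $(\ref{s3s3})$, where on each factor $|z_1|^2$ and $|z_3|^2$ are (rescaled) moment-map components of the weighted $S^1$-action. First I would observe that $f$ splits as a sum $f = f_1 + f_3$ where $f_i$ depends only on the $i$-th factor, so the critical points of $f$ are exactly the products of critical points of $f_1$ on $\bbc\bbp(\bp_1,\bp_2)$ with those of $f_3$ on $\bbc\bbp(\bp_3,\bp_4)$. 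On a weighted projective line, the squared moment map of the $S^1$-action is a standard perfect Morse function with exactly two critical points, the two fixed points $[1:0]$ and $[0:1]$ (these are the orbifold points when the weights are $>1$). Hence $f$ has exactly four critical points on $\calz$, namely the four products $([1:0]\text{ or }[0:1])\times([1:0]\text{ or }[0:1])$, matching the count in Lemma \ref{fixReeb}.

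Next I would verify that these four points are precisely the images under the projection $\pi:M_\bfp\ra{1.6}\calz$ of the four closed $1$-dimensional $T^3(\bfp)$-orbits identified in Lemma \ref{fixReeb}. From the description in that proof, those orbits sit over the loci where $z_2=0$ and one of $z_3,z_4$ vanishes (or $z_1=0$ and one of $z_3,z_4$ vanishes); passing to the quotient by $S^1(R_\bfp)$ and then identifying $\calz$ via the diagram $(\ref{orbibranchcover2})$, these loci descend exactly to the four corner points $\{|z_1|^2\in\{0,\tfrac{1}{2}\}\}\times\{|z_3|^2\in\{0,\tfrac{1}{2}\}\}$, i.e.\ the four critical points of $f$. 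This is essentially a bookkeeping check using the exact sequence $(\ref{laexact})$ and the formula $\mu_2(\bfz)=(|z_1|^2,|z_3|^2)$.

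Finally, to see that $f$ is a \emph{perfect} Morse function I would compute, or rather quote, the Morse indices. On each weighted $\bbc\bbp^1$-factor the squared moment map has one critical point of index $0$ (minimum) and one of index $2$ (maximum) — the underlying space is a $2$-sphere and the function is the standard height-type function up to a reparametrization absorbing the weights. Hence on the product the four critical points have indices $0,2,2,4$, so the Morse polynomial is $1 + 2t^2 + t^4$, which equals the Poincar\'e polynomial of $\calz$ (the rational cohomology of $\calz$ agrees with that of $S^2\times S^2$, by the rational-cohomology statement for orbifolds recalled in \S1.2). Since the Morse polynomial equals the Poincar\'e polynomial, there are no cancellations and $f$ is perfect. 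Alternatively, and more cleanly, I would invoke the general fact that $|\mu|^2$ for a Hamiltonian torus action with isolated fixed points on a compact symplectic (orbifold) is a perfect Morse--Bott, here Morse, function (Atiyah--Bott / Kirwan), which applies verbatim to the toric orbifold $(\calz,\gro_\bfp)$ with torus $\grG_2(\bfp)$.

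The main obstacle is the perfectness claim rather than the critical-point count: one must be slightly careful that the Atiyah--Bott--Kirwan argument, or the direct Morse-index computation, goes through in the \emph{orbifold} setting and with the \emph{weighted} (rather than standard) circle actions on the factors. The cleanest route is the product decomposition $f=f_1+f_3$ together with the elementary fact that the squared moment map on a weighted projective line is a perfect Morse function with critical points at the two fixed points; perfectness of a sum of functions on a product of the corresponding critical manifolds then follows from the K\"unneth formula. The identification of these critical points with the four Reeb orbits of Lemma \ref{fixReeb} is then immediate from the explicit coordinate description.
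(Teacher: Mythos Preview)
Your proof is correct. The paper's proof is shorter and proceeds in the reverse order from yours: it first invokes the general result (citing Lerman--Tolman \cite{LeTo97}, the orbifold version of the Atiyah--Bott/Kirwan theorem you mention as your alternative) that the norm-square of a moment map is always a perfect Morse--Bott function, then checks directly---using the relations $H_1\equiv aH_2$ and $H_3\equiv bH_4$ modulo $\gg_2(\bfp)$ together with Lemma~3.1 of \cite{Kir84}---that the critical set consists of the four isolated points $[1,0]\times[1,0],\ldots,[0,1]\times[0,1]$, which are then identified with the Reeb orbits of Lemma~\ref{fixReeb}. Your primary argument via the product decomposition $f=f_1+f_3$ and the explicit index computation $0,2,2,4$ matched against the Poincar\'e polynomial is a perfectly valid and more self-contained route that avoids citing the general perfectness theorem, at the cost of a little extra bookkeeping; the paper's route is quicker but relies on a heavier black box. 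Either way the identification of the four critical points with the four Reeb orbits is the same coordinate check.
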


\begin{proof}
Since the critical points are isolated $f$ is a Morse function, and Morse-Bott functions that are the norm squared of a moment map are perfect \cite{LeTo97}.
It is easy to check directly (see also Lemma 3.1 of \cite{Kir84}) using the relations 
$$H_1\equiv aH_2\mod{\gg_2(\bfp)}, \quad H_3\equiv bH_4\mod{\gg_2(\bfp)}$$
for some $a,b\in\bbr$ that $f$ has precisely the four critical points 
$$[1,0]\times [1,0],~[1,0]\times [0,1],~[0,1]\times [1,0],~[0,1]\times [0,1]$$
and these correspond to the four Reeb orbits of Lemma \ref{fixReeb}.
\end{proof}

\begin{thm}\label{conhomorbibun}
In the case of circle reductions in dimension $5$, which have four Reeb orbits fixed by the $T^3$-action, 
the differential in Morse-Bott contact homology vanishes.
Moreover, the contact homology is determined completely by the homology of the base orbifold.  More precisely, it is given by the the homology groups of each orbits space.  The degree of each generator is given by the degree of the corresponding generator in homology, with a degree shift given by the Robbin-Salamon index.
\end{thm}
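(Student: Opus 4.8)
The plan is to treat separately the two summands of the contact--homology differential $d=\partial_{MSW}+d_{CH}$ of Equation~(\ref{conhomdiff}), to show that on the Morse--Bott complex attached to $M_\bfp$ each of them vanishes identically (so that the contact homology coincides with the chain complex itself), and then to read off the grading from Equation~(\ref{eq:gengrad}).

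First I would dispose of the curve--counting piece $d_{CH}$. By Lemma~\ref{circlered} and the diagram~(\ref{orbibranchcover2}), $M_\bfp$ is the total space of an $S^1$-orbibundle over the symplectic orbifold $\calz=\bbc\bbp(\bp_1,\bp_2)\times\bbc\bbp(\bp_3,\bp_4)$, so Proposition~\ref{diffvanish} applies verbatim: besides the $\bbr$-action by translation in the symplectization, every moduli space $\oldmathcal{M}(S^+,S_1,\ldots,S_k)$ of generalized holomorphic curves carries an effective residual circle action coming from the $S^1$-fibre, so after quotienting by $\bbr$ it still has dimension $\geq 1$ and contains no rigid element. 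Hence $\#^{alg}\,\oldmathcal{M}(S^+,S_1,\ldots,S_k)/\bbr=0$ for every configuration of orbit spaces, and $d_{CH}\equiv 0$. I emphasize that this step uses only the $S^1$-symmetry, and not the standing transversality hypothesis.

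Next I would handle the Morse--Smale--Witten piece $\partial_{MSW}$, which is the internal Morse differential on the orbit spaces $S_{T_j}$ and is computed from the restrictions $f_j=|\mu_2|^2_{T_j}$ of $f=|\mu_2|^2$. The orbit spaces $S_{T_j}$ are exactly the orbifold strata of $\calz$: the principal stratum $\calz$ itself; the (at most four) ``edge'' strata, each a weighted $\bbc\bbp^1$ lying over an orbifold curve; and the four ``corner'' points carrying the minimal closed Reeb orbits of Lemma~\ref{fixReeb}. On each of these $f_j$ is again the norm squared of the moment map for the induced subtorus action, hence a perfect Morse--Bott function by Lerman--Tolman \cite{LeTo97} (cf.\ \cite{Kir84}); and since by Proposition~\ref{momMorse} (and its analogue on the sub-strata) its critical set is a finite set of points all of even index, $f_j$ is a perfect Morse function and $\partial_{MSW}$ vanishes on that stratum for degree-parity reasons. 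Since $\partial_{MSW}$ is internal to the strata, it vanishes on all of $C_*$, and, combined with the previous step, $d=0$; thus $HC_*(M_\bfp,\cald_\bfp)\cong C_*$ as graded modules over the Novikov ring $\bbq[H_2(M_\bfp,\bbz)/\calr]$.

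It remains to identify the grading. A basis of $C_*$ is given by the critical points of the functions $f_j$, and by perfectness the index-$k$ critical points on a stratum $S_{T_j}$ are in bijection with a homogeneous basis of $H_k(S_{T_j};\bbq)$; thus $C_*$, and hence $HC_*(M_\bfp,\cald_\bfp)$, is the direct sum over the orbit spaces of their rational homology groups. By Equation~(\ref{eq:gengrad}) the contact--homology degree of the generator attached to such a critical point $p$ equals $\mathrm{ind}_p(f_j)+\mu_{RS}(\grg_{T_j})-\frac{1}{2}\dim S_{T_j}$, where $\grg_{T_j}$ is any Reeb orbit in $S_{T_j}$ (all sharing the same Robbin--Salamon index); so the homology of each $S_{T_j}$ enters shifted by the fixed number $\mu_{RS}(\grg_{T_j})-\frac{1}{2}\dim S_{T_j}$, which is the degree shift claimed in the theorem. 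I expect the main delicacy to lie not in the vanishing of $d$ but in this last bookkeeping: one must check that $\mu_{RS}$ is genuinely constant along each stratum, fix the shifts using a single choice of capping disk per orbit so that the Novikov-ring degree change $-2\langle c_1(\cald_\bfp),A\rangle$ is accounted for coherently (recall $c_1(\cald_\bfp)=(p_1+p_2-p_3-p_4)\grg\neq 0$ by Lemma~\ref{c_1D}), and verify that no generator is discarded as a ``bad'' orbit, i.e.\ that the parities of even and odd iterates agree --- which here holds because the linearized return maps along the four minimal orbits are rotations. Everything else, in particular the regularity of the $\delbar_J$-operator required to make the Morse--Bott reduction of \cite{Bou02t} rigorous, is subsumed under the paper's standing transversality assumption.
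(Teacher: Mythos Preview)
Your proposal is correct and follows essentially the same approach as the paper's own proof: invoke Proposition~\ref{diffvanish} to kill $d_{CH}$, use perfectness of $|\mu_2|^2$ (and its restrictions to the orbit spaces) to kill $\partial_{MSW}$, and read the grading off Equation~(\ref{eq:gengrad}). Your treatment is more detailed than the paper's---you explicitly describe the orbit-space stratification, address the exclusion of bad orbits, and flag the capping-disk/Novikov bookkeeping---but the underlying argument is the same.
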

\begin{proof} 
By Proposition \ref{diffvanish} there are no rigid holomorphic curves. So $d_{CH}$ vanishes. But also since $|\mu|^2$ is a perfect $S^1$-invariant Morse function, the Morse-Smale-Witten 
differential $\d_{MSW}$ vanishes as well. Thus, the full differential (\ref{conhomdiff}) vanishes.  For each period of the Reeb flow,
we get a new Reeb orbit, corresponding to  some critical point of $f.$  Since $f$ is perfect, these critical points correspond not just 
to chains but to actual homology classes.  The statement about the grading follows from \ref{eq:gengrad}.
\end{proof}

The next proposition, though not a general proof of invariance of contact homology, does tell us that we do get an invariant in the world of $S^1$-orbibundles, whose bases admit a perfect Morse function.
\begin{proposition}
Let $M$ be a quasiregular contact manifold, such that its quotient by the Reeb vector field is a symplectic orbifold which admits a perfect Morse function.  Then if $M'$ is contactomorphic to $M$, is quasiregular, and the quotient by its Reeb vector field is also a symplectic orbifold which admits a perfect Morse function, then the two contact homology algebras are isomorphic. 
\end{proposition}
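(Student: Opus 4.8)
The plan is to push everything across the given contactomorphism and then invoke Theorem~\ref{conhomorbibun} to reduce the statement to the fact that, for the contact manifolds in question, contact homology is a \emph{free} supercommutative algebra whose generators are of a purely topological nature. So let $\varphi\colon M\to M'$ be a diffeomorphism with $\varphi_*\cald=\cald'$, and let $\eta'$ be a quasi-regular contact form representing $\cald'$ whose quotient orbifold $\calz'$ admits a perfect Morse function. First I would observe that $\varphi^*\eta'$ is a quasi-regular contact form representing $\cald$, that its Reeb field is the push-forward of that of $\eta'$ and so its quotient orbifold is $\varphi$-isomorphic to $\calz'$ (in particular it again carries a perfect Morse function), and that $\varphi$ carries the Reeb flow, the orbit spaces $S_T$, the Reeb-invariant Morse functions, and the capping surfaces used to trivialise $\cald'$ over to the corresponding data for $\varphi^*\eta'$. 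Since $\varphi_*c_1(\cald)=c_1(\cald')$, the grading shifts in Equation~(\ref{capdisk}) also correspond. Hence the Morse--Bott complexes of $(M,\varphi^*\eta')$ and $(M',\eta')$ are canonically isomorphic as differential graded algebras over the Novikov ring $\bbq[H_2(M,\bbz)/\calr]\cong\bbq[H_2(M',\bbz)/\calr']$, compatibly with $\varphi_*$. It therefore suffices to show that the Morse--Bott contact homology of $(M,\cald)$ built from the original quasi-regular $\eta$ (quotient the given perfect-Morse orbifold $\calz$) and the one built from $\varphi^*\eta'$ (quotient $\cong\calz'$) coincide.

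For each such choice I would apply Theorem~\ref{conhomorbibun}: since the quotient orbifold admits a perfect Morse function, Proposition~\ref{diffvanish} kills $d_{CH}$ and perfectness kills $\partial_{MSW}$, so the differential~(\ref{conhomdiff}) vanishes identically and $HC_*(M,\cald)$ is the free supercommutative algebra over $\bbq[H_2(M,\bbz)/\calr]$ on the graded vector space $\bigoplus_{T\in\sigma(\eta)}H_{*+s(T)}(S_T;\bbq)$, where $S_T$ ranges over the orbit spaces (the orbifold strata of the quotient) and $s(T)$ is the degree shift prescribed by Equation~(\ref{eq:gengrad}) together with the overall shift by $n-2$. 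Because a free supercommutative graded algebra is recovered up to isomorphism from its module of indecomposables, the whole algebra is determined by this graded generating module, and everything now reduces to showing that $\bigoplus_{T}H_{*+s(T)}(S_T;\bbq)$ is independent of the chosen quasi-regular contact form in $\cald$.

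This last point is the crux, and the main obstacle. The shifts $s(T)$ are built from $\mu_{RS}$, from $\dim S_T$, and — through the capping-disk dependence~(\ref{capdisk}) — from $c_1(\cald)$, which is a genuine contact invariant by Lemma~\ref{c_1D} and Proposition~\ref{Dpc1}; so the real content is that the graded vector spaces $\bigoplus_T H_*(S_T;\bbq)$ assembled from the orbit spaces of two different quasi-regular forms for $\cald$ agree once the shifts are applied. I expect to establish this by the same mechanism that powers Theorem~\ref{conhomorbibun}: the SFT continuation map attached to a symplectic cobordism interpolating between the two prequantization geometries is, like the differential, an algebraic count of rigid holomorphic curves, and the residual $\bbr^+$- and torus-symmetries force every relevant moduli space to have positive dimension except for the trivial ``zero--area'' cylinders over Reeb orbits, which produce an explicit grading-preserving isomorphism of generating modules; as in the proof of Proposition~\ref{diffvanish} no abstract transversality input is required because the obstruction is absorbed by a free continuous symmetry. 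In the toric setting of this paper one can alternatively verify the independence directly, since the orbit spaces are copies of weighted projective lines, two-spheres and points whose homology, together with the index shifts, is computed from the combinatorics. Combining the three steps yields graded-algebra isomorphisms, compatible with $\varphi_*$ on Novikov rings, from $HC_*(M,\cald)$ computed with $\eta$, to $HC_*(M,\cald)$ computed with $\varphi^*\eta'$, to $HC_*(M',\cald')$ computed with $\eta'$, which is the assertion.
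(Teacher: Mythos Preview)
Your route and the paper's share the same engine---the SFT continuation/cobordism map of \cite{ElGiHo00}---but you take a detour and introduce a genuine gap along the way.

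The paper's argument is three sentences: the hypotheses on $M$ and $M'$ make both Morse--Bott differentials vanish (Proposition~\ref{diffvanish} kills $d_{CH}$, perfectness kills $\partial_{MSW}$); one then writes down the EGH map $\Phi$ between the two chain algebras by counting rigid curves in a symplectic cobordism from $M$ to $M'$; the chain-map identity $\Phi\circ\partial=\partial'\circ\Phi$ is then \emph{vacuously} true because both sides are zero, so $\Phi$ passes to homology and the standard SFT invariance package gives the isomorphism. Your preliminary step of pulling $\eta'$ back along $\varphi$ to reduce to two quasi-regular forms on the same manifold $M$ is correct but unnecessary: the cobordism map is already defined directly between $M$ and $M'$.

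The real problem is your attempt to identify the cobordism map explicitly. You claim that ``residual $\bbr^+$- and torus-symmetries'' of the interpolating cobordism force every moduli space to have positive dimension except for trivial zero-area cylinders, and you invoke Proposition~\ref{diffvanish} as the model. But a symplectic cobordism between two \emph{different} contact forms is not $\bbr$-translation invariant: that symmetry is a feature of the symplectization of a single form, and it disappears the moment the form varies with the $\bbr$-coordinate. Likewise the Reeb circles of $\eta$ and $\varphi^*\eta'$ are in general distinct, so outside the toric case there is no common $S^1$ acting on the cobordism either. Thus neither of the two dimensions you want to conjure is actually available, and your conclusion that the continuation map is diagonal on generators is unsupported. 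The paper sidesteps this entirely by never attempting to compute $\Phi$: it only needs $\Phi$ to exist and to be a chain map, and the latter comes for free once both differentials vanish.
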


\begin{proof}
The conditions on $M$, $M'$, and their bases ensure that all parts of the differential vanish.  Therefore we may construct a map between these two algebras as in \cite{ElGiHo00} counting rigid curves in a symplectic cobordism between $M,$ $M'$.  The main difficulty is in seeing that this map is a chain map.  However, since the differentials vanish on both ends, the map is trivially a chain map, hence the two contact homology algebras are isomorphic.
\end{proof}

To compute the grading on contact homology it is useful to consider a special case of the \textbf{join} construction \cite{BGO06}.
 Since we can view our toric sphere bundles as quotients of $S^3 \times S^3$ we have a convenient way to compute indices.  This is of particular interest for strata of positive codimension,
since the orbits in the codimension 0 stratum behave exactly as in the regular case.  To define the 
join construction we start with two quasiregular contact manifolds, $M_1, M_2$ with contact forms $\eta_1, \eta_2,$ and bases 
$\calz_1, \calz_2$ with symplectic forms $\omega_1$ and $\omega_2$.  Then the product $M_1 \times M_2$ is a $T^2$-bundle over $\calz_1 \times \calz_2$.  We take the quotient of 
$M_1 \times M_2$ by the action of the circle obtained by gluing together Reeb orbits on each piece i.e., 
\begin{equation}
(z, w) \mapsto (e^{i k_1 \theta}z,e^{-i k_2 \theta}w). \label{Reqrel}
\end{equation}

The admissibility conditions of Definition \ref{admis} are precisely the conditions that guarantee that the quotient by this action is smooth in which case it yields a new quasiregular contact manifold with base $\calz_1 \times \calz_2$, contact form $\eta_1 + \eta_2$, contact distribution
given by $\cald_1 \oplus \cald_2$, and Reeb vector field $R_{\eta_1} + R_{\eta_2}.$  This contact structure is exactly the one coming from the 
principal circle bundle obtained by requiring that its curvature form is the pullback of the sum of the two symplectic forms on each base space.
 We obtain new Reeb orbits as equivalence classes of pairs of Reeb orbits one from each of $M_1, M_2.$  When $k_1, k_2$ are different from $1,$ we have a similar contact manifold, except the curvature
is given by pulling back $k_1$ and $k_2$ multiples of the symplectic forms, namely $d\alpha = \pi^* (k_2 \omega_1 + k_1\omega_2)$.  In this case Reeb orbits in the new total space will correspond to pairs, one 
wrapping $k_1$ the other wrapping $k_2$ times (in addition to the multiplicity of the orbit as a Reeb orbit in one of the three spheres).   
In the following, $M_1, M_2$ are both standard three spheres. Index calculations on three dimensional spheres are 
standard, however, we present the following lemma for completeness and also to illustrate the inherent role of the orbifold structure.

Let us consider the contact structure on the quotient of the product of two standard weighted three spheres with weights $p_1, p_2, p_3, p_4.$  As before we take $k_1=\gcd(p_1, p_2),$ $k_2 = \gcd(p_3, p_4),$ and $\overline{p_i}=\frac{p_i}{k_1}$ for $i=1, 2$, and $\overline{p_j}=\frac{p_j}{k_2}$ for $j=3, 4.$ We view this as a 
product of hypersurfaces in $\bbc^4$ with coordinates $(z_1, z_2, z_3, z_4 )$ with $z_j=x_j +iy_j,$ subject to the action (\ref{Reqrel}).  This manifold
is the total space of an orbibundle over an orbifold $S^2\times S^2$ with orbifold singularities at the products of the north and south poles, and for the products
of the north and south poles with copies of $S^2$  These singularities correspond to 
setting one or two of the $z_j$ to $0.$  The Reeb vector field is given by 
\begin{equation}\label{Reebs3}
 p_1 y_1 \partial_{x_1} - x_1 p_1 \partial_{y_1} +  p_2 y_2 \partial_{x_2} - p_2 x_2 \partial_{y_4} + p_3 y_3 \partial_{x_3} - x_3 p_3 \partial_{y_3} +  p_4 y_4 \partial_{x_4}
 - p_4 x_4 \partial_{y_4}  
\end{equation}

and the contact distribution is given by the span of the vectors 
\begin{equation}
  -\frac{1}{p_1}x_2 \partial_{x_1} + \frac{1}{p_1}y_2 \partial_{y_1} +\frac{1}{p_2} x_1 \partial_{x_2} -\frac{1}{p_2}y_1 \partial_{y_2} ~\label{s3condist1} 
\end{equation}

\begin{equation}
  -\frac{1}{p_1} y_2 \partial_{x_1} -\frac{1}{p_1} x_2 \partial_{y_1} +\frac{1}{p_2} y_1 \partial_{x_2} +\frac{1}{p_2} x_1 \partial_{y_2}~\label{s3condist2} 
\end{equation}

\begin{equation}
  -\frac{1}{p_3}x_4 \partial_{x_3} +\frac{1}{p_3} y_4 \partial_{y_3} +\frac{1}{p_4}x_3 \partial_{x_4} -\frac{1}{p_4}y_3 \partial_{y_4} ~\label{s3condist3} 
\end{equation}

and
\begin{equation}
  -\frac{1}{p_3}y_4 \partial_{x_3} -\frac{1}{p_3}x_4 \partial_{y_3} +\frac{1}{p_4}y_3 \partial_{x_4} +\frac{1}{p_4} x_3 \partial_{y_4}.~\label{s3condist4} 
\end{equation}
In the following we restrict ourselves to the case where $p_3=p_4 =k_2.$
To get our hands on an orbit in the quotient, we must, for each time around the fiber, pick an appropriate circle out of the fiber of the torus bundle. 
It is easy to see that the equivalence relation gives us a circle obtained by wrapping around the first circle $k_2$ times and around the second circle $k_1$ times.   
Let us now parametrize the fiber.
We may choose coordinate for a Reeb orbit by:
$$\g(t)=(0,cos(k_2 \overline{p_2}t)+ isin(k_2\overline{p_2}t) ,0, cos(p_2t)+ i sin(p_2 t) ).$$
Now when $t=\frac{1}{\overline{p_2}}$ we have wrapped around the first orbit $k_2$-times and the second one $k_1$ times. 
Here the action is $\frac{1}{p_2}.$  This is the smallest action since we have assumed that $p_2 > p_1.$
What about when the first orbit wraps around more than once in $S^3$? Let us see how to look at such an orbit.
This corresponds to taking
\begin{equation}\label{morbit}
\g(t)=(cos(k_2\overline{p_1}t)+ isin(k_2\overline{p_1}t) ,0,0, cos(\frac{p_1}{m}t)+ i sin(\frac{p_1}{m} t) )
\end{equation}
where $m$ is the multiplicity.  Now when $t=\frac{m}{p_1},$ we wrap around the first orbit $mk_2$ times and the second one $k_1$ times.  As long
as $m< \min \{k_2, p_1 \}$ we do not enter a higer dimensional orbit space.  Similar considerations remain true for $z_1=0.$   
Let us compute the Robbin-Salamon index of the orbits (\ref{morbit}).
To do this we must choose a disk $D$ with boundary $\g$.  
Such a disk can be written explicitly.  We begin by producing a disk in $S^3 \times S^3.$ 
\begin{equation} \label{disk}
(cos(\theta), sin(\theta)e^{2 \pi i k_2\overline{p_2} t}, cos(\theta), sin(\theta)e^{2 \pi i \frac{p_2 t}{m}})
\end{equation}
The above disk clearly has boundary $\g$ (the boundary occurs when $\theta= \frac{\pi}{2}$) and we have $\theta \in [0, \frac{\pi}{2}].$
 To pull back the contact distribution we plug the coordinates into (\ref{s3condist1})-(\ref{s3condist4})
\begin{equation}
  -\frac{1}{p_1}sin(\theta)cos(2 \pi i k_2 \overline{p_2} t ) \partial_{x_1} +\frac{1}{p_1} sin(\theta)sin(2 \pi i k_2\overline{p_2} t ) \partial_{y_1} +\frac{1}{p_2}cos(\theta) \partial_{x_2}  ~\label{trivs3condist1} 
\end{equation}

\begin{equation}
  -\frac{1}{p_1}sin(\theta)sin(2 \pi i k_2\overline{p_2} t) \partial_{x_1} -\frac{1}{p_1}sin(\theta)cos(2 \pi i k_2\overline{p_2} t ) \partial_{y_1}  +\frac{1}{p_2} cos(\theta) \partial_{y_2}~\label{trivs3condist2} 
\end{equation}

\begin{equation}
-\frac{1}{p_3}sin(\theta)cos(2 \pi i \frac{\overline{p_2}t}{m}) \partial_{x_3} +\frac{1}{p_3} sin(\theta)sin(2 \pi i \frac{\overline{p_2} t}{m}) \partial_{y_3} +\frac{1}{p_4}cos(\theta) \partial_{x_4}   ~\label{trivs3condist3} 
\end{equation}

and
\begin{equation}
  -\frac{1}{p_3}sin(\theta)sin(2 \pi i \frac{\overline{p_2} t }{m}) \partial_{x_3} -\frac{1}{p_3}sin(\theta)cos(2 \pi i \frac{\overline{p_2} t }{m}) \partial_{y_3}  + \frac{1}{p_4}cos(\theta) \partial_{y_4}.~\label{trivs3condist4} 
\end{equation}

When $\theta=\frac{\pi}{2},$ these $4$ vectors become 
$$\frac{1}{p_1}(-cos(2 \pi i k_2\overline{p_2} t) \partial_{x_1} + sin(2 \pi i k_2\overline{p_2} t) \partial_{y_1}),$$
$$\frac{1}{p_1} (-sin(2 \pi i k_2 \overline{p_2} t) \partial_{x_1} -cos(2 \pi i k_2\overline{p_2} t) \partial_{y_1}), $$ 
$$\frac{1}{p_3} (-cos(2 \pi i \frac{p_2 t}{m})\partial_{x_3} +sin(2 \pi i \frac{p_2 t}{m})\partial_{y_3}), $$
and $$\frac{1}{p_3}(-sin(2 \pi i \frac{p_2 t}{m})\partial_{x_3} -cos(2 \pi i \frac{p_2 t}{m})\partial_{y_3}) .$$

Disks for the other orbits mapping into branch divisors have a similar expression.  The key point is that we only see vectors corresponding to the 
coordinates which have been set to zero.  

Now we can easily compute the Robbin-Salamon index of these orbits.  Recall that given a path of symplectic matrices, $\Phi(t)$, a number $t$ is called a
crossing if $\Phi(t)$ has an eigenvalue equal to $1.$  To compute the Robbin-Salamon index of a path of symplectic matrices on $[0,T]$ one computes
$$\frac{1}{2}\mbox{signature}(\Gamma(0)) + \sum_{\mbox{crossings }t\neq 0 \mbox{,}T }\mbox{signature}(\Gamma(t)) + \frac{1}{2}\mbox{signature}(\Gamma(T)). $$
Here the crossing form is $$\o(\dot{\Phi}(t)v, v))$$ 
restricted to the subspace on which $\Phi$ has eigenvalues equal to $1.$ 
In this case at each crossing the crossing form is just $\o(v, J_0v)$, so this gives signature $2$ on each two dimensional subspace consisting of 
eigenvectors with eigenvalue $1.$  At crossings the vectors above spanning $\cald$ above become
$-\frac{1}{p_1}\d x_1 ,$
$-\frac{1}{p_1}\d y_1 ,$
$-\frac{1}{l}\d x_3 ,$ 
and
$-\frac{1}{l}\d y_3 .$

Recall that the linearized Reeb flow is of the form 
$$ \left[ \begin{array}{l c c r}
e^{2 \pi i  p_1 t}& 0& 0 &0\\
0& e^{2 \pi i  p_2 t}& 0 &0\\
0&0&e^{2 \pi i k_2 t}& 0\\
0&0&0&e^{2 \pi i k_2 t}
\end{array} \right] 
$$
Each complex block of the matrix looks like 
$$ \left[ \begin{array}{l r}
cos(2 \pi p_j t)& -sin(2 \pi p_j t)\\
sin(2 \pi  p_j t)& cos(2 \pi   p_j t)\\
\end{array} \right]$$ 
The time derivative of each block looks like
$$ \left[ \begin{array}{l r}
-2 \pi  p_j sin(2 \pi  p_j t)& - 2 \pi  p_j cos(2 \pi  p_j t)\\
2 \pi  p_j cos(2 \pi  p_j t)& -2 \pi  p_j sin(2 \pi   p_j t)\\
\end{array} \right]$$ 
At crossings these blocks become
$$ \left[ \begin{array}{l r}
0& - 2 \pi  p_j \\
2 \pi  p_j & 0\\
\end{array} \right].$$ 
 
The crossings which have the first two vectors as $1$-eigenvectors occur at integers multiples of $\frac{1}{k_2p_1},$  and those for the second two occur at 
integer multiples of $\frac{k_1m}{k_2}$.
As we saw above the flow splits into two parts.  That corresponding to the first two coordinates and that corresponding to the second two.  This means the second
part, for multiplicity $m$ is $2mk_1$.  Now we add the normal part.  For orbits of multiplicity $m$ we get contribution $1+ 2\lfloor \frac{m}{p_1}\rfloor.$ 
Therefore for multiplicity $m$ these orbits have Robbin-Salamon index 
$$2k_1m +  2\lfloor \frac{mp_2}{p_1}\rfloor - 1, $$
similarly, setting $p_2$ to zero we obtain 
$$2k_1m +  2\lfloor \frac{mp_1}{p_2}\rfloor - 1.$$ 
We label these orbits $\g_{m,i}$.

The goal here is to distinguish contact structures.  We know that $\frac{p_2-1}{p_2} > \frac{p_1 -1}{p_1}.$ We have then that the index of $\gamma = 2(p_1 -1)$.  This tells us that we have $p_1-1 + p_2 -1$ orbits of index less than $2(p_2 -1)$ 

Theorem \ref{conhomorbibun} gives us a complete picture of the contact homology of the manifolds given by admissible $4$-tuples up to knowing the Robbin-Salamon indices.  Let us spell this out in the case $(p_1, p_2, k_2, k_2)$  In this case there are essentially two different kinds of orbit spaces.  We have $2$-dimensional orbit spaces
which project to $2$-spheres in the base, and we have copies of the whole manifold.  The $2$-dimensional orbits spaces consist of orbits having action
$\frac{k_2m}{p_i} $ for $p_i \not | m.$  The $4$-dimensional orbit spaces consist of orbits of integer action.  
For each $2$-dimensional orbit space, $S_{\frac{k_2m}{p_i}}$ we obtain exactly two orbits contributing to contact homology with grading difference two.  
We denote these orbits $\hat{\gamma_{m,i}}$, and $\check{\gamma_{m_i}}$ corresponding to the maximum and minimum of the Morse function on $S_{\frac{k_2m}{p_i}}$.
For such orbits with action less than $1$ we have grading 
\begin{equation}\label{brgen}
\begin{split}
|\hat{\g_{m,i}}| =\mu_{RS}(\gamma_{m,i})+1 \\
|\check{\g_{m,i}}|=\mu_{RS}(\gamma_{m,i})-1.
\end{split}
\end{equation}
For each $4$-dimensional orbit space we have $4$ generators for contact homology, again corresponding to critical points. We label these 
$$\hat{\g_{m}}, \check{\g_m}, \g^{s_1}_m, \g^{s_2}_m$$ for the maximum, minimum, and two saddle points, respectively.  With a choice of disk $D$ projecting
to the spherical homology class $\Sigma \in H_2(\calz, \bbq)$ we have 
\begin{equation}\label{densegen}
\begin{split}
|\hat{\g_{m}}| =\mu_{RS}(\gamma_{m}, D)+2 \\
|\check{\g_{m}}|=\mu_{RS}(\gamma_{m}, D)-2 \\
| \g^{s_1}_m|= \mu_{RS}(\g_m, D)\\ 
|\g^{s_2}_m|= \mu_{RS}(\g_m, D)
\end{split}
\end{equation}
In (\ref{densegen}) $\mu_{RS}(\gamma_{m}, D)= 2k_2m\langle c_1^{orb}(\calz),(\Sigma)\rangle.$ Moreover for $2$-dimensional orbit spaces with action greater than $1$
by the catenation property of the Robbin-Salamon index, we may decompose the orbit into a part with biggest possible integer action and a part with action
smaller than $1.$  We then add the indices of these two orbits to get the Robbin-Salamon index. Note that for these two dimensional orbit spaces, the tangential part of the flow is a loop, but the normal does not complete a loop, this explains the appearance of the summand $1$ in the above formulae. 
Note that, here, the Robbin-Salamon index is non-decreasing with respect to action.  Thus we may count the number of orbits with index less than $1.$  This will give a count of generators of contact homology of index less than $2(p_1 + p_2 +2)-2.$  From the above discussion there are $p_1 -1 + p_2 -1$ such orbits coming from 
lower dimensional orbit spaces, and then one coming from $\check{g_1}$.  This gives exactly $p_1 + p_2 -1$ orbits in degree less than $2(p_1+ p_2 +1).$ 
The discussion of the previous paragraphs now implies proposition \ref{different}.

\begin{proposition}\label{different}
 Let $(p_1, p_2, l,l)$ and $(p_1', p_2'. l',l')$ be two admissible 4-tuples.  If $p_1+p_2 \neq p_1'+p_2'$, then the corresponding contact manifolds cannot be contactomorphic. 
\end{proposition}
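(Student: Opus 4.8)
\medskip
\noindent\textbf{Proof proposal.} The plan is to separate the easy cases, where the contact bundles are already distinguished, from the remaining ones, which I would handle with the Morse--Bott contact homology computed in Theorem \ref{conhomorbibun}.

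I would begin with the easy case. Write $\bfp=(p_1,p_2,l,l)$ and $\bfp'=(p_1',p_2',l',l')$. By Lemma \ref{c_1D} we have $c_1(\cald_\bfp)=(p_1+p_2-2l)\gamma$ and $c_1(\cald_{\bfp'})=(p_1'+p_2'-2l')\gamma'$, where $\gamma,\gamma'$ generate $H^2(M_\bfp,\bbz)\cong\bbz\cong H^2(M_{\bfp'},\bbz)$. A contactomorphism $\varphi\colon M_\bfp\to M_{\bfp'}$ satisfies $\varphi^*\cald_{\bfp'}\cong\cald_\bfp$, hence $\varphi^*c_1(\cald_{\bfp'})=c_1(\cald_\bfp)$, and since $\varphi^*\gamma'=\pm\gamma$ this forces $|p_1+p_2-2l|=|p_1'+p_2'-2l'|$ (equivalently, by Proposition \ref{Dpc1}, the complex vector bundles $\cald_\bfp$ and $\cald_{\bfp'}$ agree up to sign). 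If this equality fails the two structures are not even isomorphic as almost contact structures and we are done; so from now on I assume it holds, which together with $p_1+p_2\neq p_1'+p_2'$ forces $l\neq l'$. Thus the first Chern class no longer separates the two structures and a subtler invariant is required.

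Next I would compute contact homology and extract $p_1+p_2$ from it. By Lemma \ref{fixReeb}, Proposition \ref{momMorse} and Proposition \ref{diffvanish}, the manifolds $M_\bfp$ and $M_{\bfp'}$ fall under the hypotheses of Theorem \ref{conhomorbibun}: the full Morse--Bott differential vanishes, and $HC_*$ is the free supercommutative algebra over the Novikov ring on generators indexed by the closed Reeb orbits described above, with degree the shifted Robbin--Salamon index, computed in Equations (\ref{brgen}) and (\ref{densegen}). Feeding in the index arithmetic of the preceding discussion, I would order the generators by degree and observe that the minima $\check{\gamma}_{m,i}$ over the two base spheres --- and, at the lowest multiplicities, the corresponding maxima $\hat{\gamma}_{m,i}$ --- together with the single bulk minimum $\check{\gamma}_1$, account for exactly $(p_1-1)+(p_2-1)+1=p_1+p_2-1$ generators of degree strictly less than $2(p_1+p_2+1)$, while every remaining generator (higher multiples in the codimension-two strata, the saddle and maximum generators, and the bulk orbits of action $\ge 1$) has degree at least $2(p_1+p_2+1)$. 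Because a contactomorphism induces an isomorphism of graded contact homology algebras (the invariance proposition stated just after Theorem \ref{conhomorbibun}), this graded count of generators is a contact invariant; reading off from it the degree at which the generator count first rises above $p_1+p_2-1$ recovers the integer $p_1+p_2$. Hence a contactomorphism between $M_\bfp$ and $M_{\bfp'}$ would force $p_1+p_2=p_1'+p_2'$, contradicting the hypothesis, and Proposition \ref{different} follows.

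The step I expect to be the main obstacle is the index bookkeeping in that last paragraph: from the explicit Robbin--Salamon data --- indices of the form $2lm+2\lfloor mp_j/p_i\rfloor\pm 1$ in the strata of positive codimension and $2lm\langle c_1^{orb}(\calz),\Sigma\rangle\pm 2$ in the top stratum --- one must verify that no ``non-minimal'' generator drops below the threshold $2(p_1+p_2+1)$ and that it is precisely there that the next generators appear, so that the count $p_1+p_2-1$ is sharp. For the bulk orbits this forces one to track the capping-disk ambiguity $2\langle c_1(\cald),A\rangle$ inside the Novikov ring, since $c_1(\cald)\neq 0$ in general, and one must also confirm that no nontrivial product of two generators lands in the degree window being counted, so that the invariant genuinely records the number of \emph{generators}. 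The analytic inputs --- vanishing of the differential for circle orbibundles and the agreement of the Morse--Bott and perturbed complexes --- are furnished by Proposition \ref{diffvanish} and the work of Bourgeois \cite{Bou02t} under the standing transversality assumption, so the new content lies entirely in this index computation and the attendant combinatorics.
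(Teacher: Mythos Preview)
Your proposal is correct and follows essentially the same route as the paper: the paper's proof of Proposition~\ref{different} is precisely the index discussion preceding its statement, where the Robbin--Salamon indices are computed and exactly $p_1+p_2-1$ generators of contact homology in degree less than $2(p_1+p_2+1)$ are counted, which is the invariant you extract. Your preliminary first-Chern-class reduction and your explicit worries about capping-disk ambiguity and products of generators are not spelled out in the paper (it passes directly to the contact-homology count), but they are harmless extra care rather than a different argument.
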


This applies directly to the $Y^{p,q}$ manifolds.  In this case the invariant is $2p-1,$ note that it does not depend on $q.$

As an application let us use the preceeding discussion to distinguish contact structures on the toric contact $5$-manifolds corresponding
to the $4$-tuple $(1, 2k-1, l, l)$ for positive integers $k$, $l$ such that the tuple, $(1, 2k-1, l, l)$, is admissible. Then we see that
$c_1(\mathcal{D})=2k - 2l.$  Let us fix the first Chern class of the contact distribution and see what happens.  We see then that
we must have $$k = \frac{c_1(\mathcal{D}) + 2l}{2}.$$
Now using proposition \ref{different} we see that there are $2k-1$ generators in contact homology of degree less than $4k+2.$

\subsection{A remark on the regular case}
In the regular case the situation is somewhat simpler, but on the other hand there is less information available at first glance.  In this case there is geometrically only one orbit spaces, $\calz$ itself.  To get a handle on the contact homology let us look at the case $(k, k, k-c, k-c)$.  This gives a regular contact manifold with $c_1(\cald)= 2c.$  We choose a basis $L_1, L_2$ of $H_2(\calz, \bbz)$ So that $L_1 = x S_1 + y S_2$, and $L_2$ lifts to a class which evaluates to $0$ under $\pi^* \o$. $x$, $y$, are chosen so that they give action $1$ for a disk that projects to $L_1$.  We define 
$x$, $y$ as follows.  Let $m$ be the smallest number so that $mk \equiv -1 \mod{3}$.  Then we define $x=\frac{km+1}{c}$, $y=x-m$.  It is easy to see that $x$, $y$ satisfy the above properties.  With these choices the grading for contact homology, for orbits of action $N$ is given by
$$|\hat{\g}|= N(2x + 2y) +2$$
$$|\check{\g}|= N(2x + 2y) -2$$
and
$$|\g^{s_j}_N|= N(2x + 2y)$$

In this picture for $N=1$, $\check{\g}$ gives the smallest possible grading.  By varying $k$, we obtain infinitely many distinct contact structures whose contact distribution has the same Chern class.

\subsection{Another way to distinguish contact structures}
As described in Section 2.9.2 of \cite{ElGiHo00} there is another situation where symplectic field theory can be used to distinguish toric contact structures. The following theorem is a generalization to smooth orbifolds of Proposition 2.9.4 of \cite{ElGiHo00}:
 
\begin{thm}\label{EGHgen}
Suppose we have two simply connected quasi-regular toric contact manifolds of Reeb type in dimension $5,$ such that 
each orbifold stratum is non-singular in the sense that its underlying space is a smooth submanifold. Suppose that under the quotient of the Reeb action one of the base manifolds has an exceptional sphere while the other does not, then these two manifolds are not contactomorphic.
 \end{thm}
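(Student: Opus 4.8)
The plan is to argue by contradiction, using the Morse--Bott description of contact homology from Theorem \ref{conhomorbibun} together with the exceptional-sphere mechanism of Section 2.9.2 of \cite{ElGiHo00}. Suppose $M$ and $M'$ were contactomorphic; transporting structures along the contactomorphism we obtain a single five-manifold with a contact structure $\cald$ carrying two quasi-regular contact forms, one whose Reeb quotient is $\calz$ --- the base that contains an exceptional sphere $E$ --- and one whose Reeb quotient is $\calz'$ --- a base with no exceptional sphere. By hypothesis $\calz$ and $\calz'$ are toric symplectic orbifolds whose underlying spaces are smooth toric surfaces and all of whose strata are smooth. The idea is to extract from $E$ a symplectic field theory quantity attached to $(M,\cald)$ that $E$ forces to be non-trivial, and then to see that the same quantity vanishes when read off from the minimal model $\calz'$.

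I would start with the numerology. We may take $E$ to be one of the toric boundary divisors of $\calz$ (every exceptional class on a smooth toric surface is represented by a torus-invariant divisor), so $E$ is a smooth two-sphere and a two-dimensional stratum of the Reeb action, and being exceptional it has $E\cdot E=-1$; adjunction then gives $\langle c_1^{orb}(\calz),E\rangle=2+E\cdot E=1$, the minimal positive value of $\langle c_1^{orb},\cdot\rangle$ on an irreducible curve. On a minimal toric surface this value is never attained by a toric divisor: for $\bbc\bbp^2$ and $\bbc\bbp^1\times\bbc\bbp^1$ and their weighted analogues every toric divisor class $\Sigma$ has $\langle c_1^{orb},\Sigma\rangle\ge 2$, while on a Hirzebruch surface $F_e$ with $e\ge 2$ the four toric divisors have $\langle c_1^{orb},\cdot\rangle\in\{2-e,\,2,\,2,\,2+e\}$, never $1$. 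Feeding $\langle c_1^{orb}(\calz),E\rangle=1$ into the Robbin--Salamon index formulas (\ref{eq:gengrad}), (\ref{brgen}), (\ref{densegen}), the stratum $E$ produces a pair of generators of $HC_*(M,\cald)$ whose gradings sit in a range that cannot be occupied by the generators coming from $\calz'$. Since contact homology with its grading (Definition \ref{conhom}) is a contactomorphism invariant, this is the desired contradiction whenever $\calz'$ is of del Pezzo type --- which covers the situations needed for the applications of this paper.

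For the general statement, where a lower-dimensional stratum or a non-toric effective class of $\calz'$ might in principle reproduce the offending gradings, I would instead run the cobordism argument of \cite{ElGiHo00,BEHWZ,Bou02t}. Blowing down $E$ gives a toric orbifold surface $\calz_0$ and an equivariant symplectic cobordism $W$ from $M_0$ (the $S^1$-orbibundle over $\calz_0$) to $M$, inside which the proper transform of $E$ persists as a closed holomorphic curve in the class $[E]$ with $\langle c_1^{orb},[E]\rangle=1$. The point of the EGH mechanism is that such a curve contributes a non-zero orbit-free term $e^{[E]}$ to the symplectic field theory potential of $W$, and the composition law for potentials then forces this term to be reproduced through any symplectic cobordism with top end $(M,\cald)$; realizing $(M,\cald)$ over the minimal surface $\calz'$ there is no exceptional curve to blow down, the corresponding moduli space is empty, and the term vanishes --- the contradiction.

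The step I expect to be the real obstacle is making this orbifold cobordism argument rigorous, which is genuinely more delicate here than in the three-dimensional setting of \cite{ElGiHo00}. One must (a) construct the blow-down cobordism equivariantly with respect to the Reeb circle and verify that the proper transform of $E$, suitably interpreted, gives a rigid contribution --- the dimension count needs care since the naive virtual dimension of the closed curve in the six-dimensional cobordism is positive, so one works fiberwise over the four-dimensional base or imposes the appropriate constraint; (b) set up the symplectic field theory potential and its composition formula for these $S^1$-orbibundles, which needs the abstract perturbation of $\delbar_J$ assumed throughout, now with non-trivial isotropy along the lower strata; and (c) exclude the extra broken configurations --- Morse--Bott cascades over the strata met by $E$, in the spirit of Proposition \ref{diffvanish} and Theorem \ref{conhomorbibun} --- that could alter the coefficient of $e^{[E]}$. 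For del Pezzo bases none of this is required, which is why the applications to the $Y^{p,q}$-type structures go through directly.
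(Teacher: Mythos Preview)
Your approach differs substantially from the paper's, and your first argument contains a concrete error while your second is, as you yourself flag, not carried to completion.

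The error: you claim the exceptional sphere may be taken to be a torus-invariant divisor of the base. This fails already for the odd Hirzebruch surfaces relevant to Corollary~\ref{ineq12}. On $S_3$ the toric boundary divisors are $E,F,L_0,L_\infty$ with self-intersections $3,-3,0,0$; the unique $(-1)$-class is $F+L$, and since $(F+L)\cdot F=-2<0$ every effective divisor in that class contains $F$ and is reducible. Symplectically $S_3\cong\widetilde{\bbc\bbp^2}$ does carry a smooth $(-1)$-sphere, but it is not torus-invariant and hence not a face of the moment polytope. There is a second, independent gap: the two-dimensional orbit spaces $S_T$ appearing in the formulas (\ref{brgen}) are not the toric boundary divisors but the \emph{branch} divisors of the $S^1$-orbibundle --- the loci where the Reeb orbit has shorter period. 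A $(-1)$-sphere that happens not to be a branch divisor contributes no separate generators to the Morse--Bott complex, so the grading comparison you sketch does not get off the ground.

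The paper argues in the opposite direction and with a different invariant: it uses the full SFT potential specialized at a degree-three class on $M$, following Section~2.9.2 of \cite{ElGiHo00}, rather than the plain contact-homology grading. On the side \emph{without} an exceptional sphere one computes the Gromov--Witten potential of the base: for a degree-two class $\alpha$ the dimension axiom gives $GW^0_{A,k}(\alpha,\ldots,\alpha)\neq 0$ only if $\langle c_1,A\rangle=1$ (with the orbifold degree-shifting correction), and in the absence of a $(-1)$-sphere no such $A$ exists, so the potential vanishes and the specialized contact homology retains an odd generator coming from the fiber-integral of $\alpha$. On the side \emph{with} an exceptional sphere $E$, specializing at the Poincar\'e dual of $E$ reproduces the potential of the standard contact $S^3$, in which the divisor axiom produces a holomorphic curve killing that odd class. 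The presence versus absence of this odd element is the contactomorphism invariant used.
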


\begin{proof}
 We show that there is an odd element in the contact homology algebra of one manifold specialized at a class which is not in the other for 
any specialization.  We assume here that all of the weights of the torus action are greater than $1$ for the manifold containing no exceptional spheres. 
As in ~\cite{ElGiHo00} the potential specialized to the Poincar\'{e} dual of an exceptional divisor will give the potential for a standard 
$S^3,$  but then for a chain which lifts to the volume form for this $3$-form there is always a holomorphic curve to kill it as a 
generator for homology specialized at this $3$ class.  Hence this homology contains no odd elements.  
Let us consider first the case where the base is a manifold.  We look at the manifold containing no exceptional sphere.  We must compute the Gromov-Witten potential.  Unfortunately it does not vanish, but, for \emph{any} 
$2$-classes the potential always vanishes. This is because the Gromov-Witten invariant 
$$GW_{A,k}^0(\alpha, \ldots, \alpha) \neq 0$$
for a $2$-dimensional class $\alpha$ only if 
$$ 2k = 4 + 2 c_1(A) + 2k -6 \Leftrightarrow c_1(A)=1$$
But the weights make this impossible.  Thus all coefficients for such curves vanish, and the potential vanishes on $\calz$, hence on $M$.
So for a $3$ class in the contact manifold obtained from integration over the
fiber of a two class, there is no holomorphic curve to kill it.  Hence specialized at such a $3$ class we have an odd generator which does 
not exist in the presence of exceptional spheres.
The orbifold case is similar.  The computation for the Gromov-Witten potential on the manifold with the exceptional
sphere follows from the divisor axiom.  To see that the coefficients for the Gromov-Witten potential vanish in the 
case where there is no exceptional sphere we note the Gromov-Witten invariant is non-zero only if the first Chern class
evaluated on $A$ is equal to one minus the degree shifting number of $\textbf{x},$ which in the absence of exceptional spheres in the stratum in question is impossible.    
\end{proof}
\begin{remark}
 We should mention here that since the base is four dimensional the results of ~\cite{HoLiSi} tell us that we can indeed use the dimension formula
above for computation of the Gromov-Witten invariants for the manifold case.  To adjust for orbifold structure we use extra point conditions since 
all strata in this case are actually smooth manifolds additionally endowed with orbifold structure. 
\end{remark}

\section{The Contact Equivalence Problem}

It is the purpose of this section to prove the equivalence of certain toric contact structures that are inequivalent as toric contact structures. If two toric contact structures $\cald_\bfp$ and $\cald_{\bfp'}$ are equivalent as contact structures, but inequivalent as toric contact structures, it means that there is a contactomorphism $\varphi:M_\bfp\ra{1.6} M_{\bfp'}$ such that $\varphi_*\cald_\bfp=\cald_{\bfp'}$, but there is no $T^3$-equivariant contactomorphism. Then their 3-tori correspond to distinct conjugacy classes of maximal tori in the contactomorphism group \cite{Ler03b,Boy10a}.

A complete answer to the equivalence problem appears to be quite difficult so we restrict ourselves to certain special cases of contact structures that are Seifert $S^1$-bundles (orbibundles) over Hirzebruch surfaces which generally has a non-trivial orbifold structure. In this case we show that certain $T^3$ equivariantly inequivalent contact structures are actually $T^2$ equivariantly equivalent for some subgroup $T^2\subset T^3$.

\subsection{Orbifold Hirzebruch Surfaces}
In this section we study a special class of toric contact structures on $S^3$ bundles over $S^2$ that can be realized as circle orbibundles over orbifold Hirzebruch surfaces. Since the reduction method gives all examples of such toric contact structures, it is important to make contact (no pun intended) with examples that are known in the literature. Here we shall always assume that the quadruple $\bfp$ is admissible.

When working with Hirzebruch surfaces, we often follow \cite{GrHa78} (but with slightly different notation) and represent $S_n$ as the projectivized bundle $S_n=\bbp(\calo(n)+\calo)\ra{1.6} \bbc\bbp^1$ with fibers $L=\bbc\bbp^1$, and sections $E$ with self-intersection number $n$, and $F$ with self-intersection $-n$. These define divisors in $S_n$ and determine a basis for the Picard group ${\rm Pic}(S_n)\approx  H^2(S_n,\bbz)\approx \bbz^2$ which satisfy $E\cdot E=n, E\cdot L=1$ and $L\cdot L=0$. However, when working with symplectic forms it is convenient to use a basis which appear for all admissible complex structures. Thus, it is convenient to treat the even and odd Hirzebruch surfaces separately. The even Hirzebruch surfaces $S_{2n}$ are diffeomorphic to $S^2\times S^2$, so we define $E_0= E-nL$. Then we have
$$E_0\cdot E_0=(E-nL)\cdot (E-nL)= E\cdot E-2nE\cdot L+L\cdot L=2n-2n=0.$$
In this case the Poincar\'e duals $\gra_L$ and $\gra_{E_0}$ are the standard area forms for the two copies of $S^2$. Similarly, the odd Hirzebruch surfaces $S_{2n+1}$ are diffeomorphic to $\bbc\bbp^2$ blown-up at a point which we denote by $\widetilde{\bbc\bbp^2}$. In this case we define $E_{-1}=E-(n+1)L$ which gives $E_{-1}\cdot E_{-1}=-1$. So $E_{-1}$ is an exceptional divisor. Again the Poincar\'e duals $\gra_L$ and $\gra_{E_{-1}}$ represent the standard area forms on a fiber and exceptional divisor, respectively.

As mentioned previously the orbifolds that we encounter are of the form $(X,\grD)$ where $X$ is a smooth algebraic variety and $\grD$ is a branch divisor. Specifically we are interested in the orbifolds $(S_n,\grD)$ where $\grD=\sum_i(1-\frac{1}{m_j})D_j$ where $D_i$ are Weil divisors on $S_n$. We refer to the pair $(S_n,\grD)$ as an {\it orbifold Hirzebruch surface}. We now wish to compute the orbifold canonical divisor in this situation. Since we are working with $\bbq$-divisors, we can express the result in terms of $E_0$ even though it is an honest divisor only on even Hirzebruch surfaces.

\begin{lemma}\label{orbcan}
Let $(S_n,\grD)$ be an orbifold Hirzebruch surface such that $E$ and $F$ are branch divisors both with ramification index $m$. Then the orbifold canonical divisor of $(S_{n},\grD_{m})$ is 
$$K^{orb}_{(S_{n},\grD_{m})}=-\frac{2}{m}E-\frac{2m-n}{m}L =-\frac{2}{m}E_0-2L.$$
Hence, $(S_{n},\grD_{m})$ is a log del Pezzo surface (Fano) if and only if $2m>n$. 
\end{lemma}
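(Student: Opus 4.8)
The plan is to compute $K^{orb}_{(S_{n},\grD_{m})}$ directly from the orbifold ramification (Hurwitz) formula $K^{orb}_{(X,\grD)} = K_X + \grD$ for a pair $(X,\grD)$ with $\grD = \sum_j(1-\tfrac1{m_j})D_j$ (cf.\ \cite{BGK05,GhKo05}), and then to settle the log del Pezzo claim by testing ampleness of $-K^{orb}_{(S_{n},\grD_{m})}$ against the generators of the Mori cone of $S_n$. First I would record the canonical class of the smooth surface: writing $K_{S_n}=aE+bL$ and using adjunction on $E\cong\bbc\bbp^1$ (so $K_{S_n}\cdot E = -2-E\cdot E = -2-n$) and on a fibre $L\cong\bbc\bbp^1$ (so $K_{S_n}\cdot L=-2$) forces $a=-2$ and $b=n-2$, i.e.\ $K_{S_n}=-2E+(n-2)L$; equivalently this is the familiar $K_{S_n}=-2F-(n+2)L$ rewritten via $E=F+nL$, where $F$ is the section of self-intersection $-n$ with $F\cdot L=1$, $F\cdot E=0$.

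Next, since $E$ and $F$ are branch divisors each of ramification index $m$, the branch divisor is $\grD_{m}=(1-\tfrac1m)E+(1-\tfrac1m)F$, and substituting $F=E-nL$ gives
\[
K^{orb}_{(S_{n},\grD_{m})}=K_{S_n}+\grD_{m}=\Bigl(-2+2-\tfrac2m\Bigr)E+\Bigl(n-2-n+\tfrac nm\Bigr)L=-\tfrac2m E-\tfrac{2m-n}{m}L .
\]
The second expression in the statement then follows by introducing $E_0 = E-\tfrac n2 L$ (a genuine divisor precisely when $n$ is even), since $-\tfrac2m E_0 - 2L = -\tfrac2m E + \tfrac nm L - 2L = -\tfrac2m E - \tfrac{2m-n}{m}L$.

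For the last assertion I would observe first that $(S_n,\grD_m)$ has only quotient (hence log terminal) singularities --- indeed $E$ and $F$ are disjoint smooth curves, so $\grD_m$ is a simple normal crossings divisor with coefficients $1-\tfrac1m<1$ --- so that being log del Pezzo is equivalent to ampleness of $-K^{orb}_{(S_{n},\grD_{m})} = \tfrac2m E + \tfrac{2m-n}{m}L$. Since the Mori cone $\overline{NE}(S_n)$ is generated by the fibre class $L$ and the negative section $F$, the Nakai--Moishezon criterion reduces ampleness to the three inequalities $(-K^{orb})^2>0$, $-K^{orb}\cdot L>0$ and $-K^{orb}\cdot F>0$. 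One computes $(-K^{orb})^2=\tfrac8m>0$ and $-K^{orb}\cdot L=\tfrac2m>0$ with no restriction on $n$ and $m$, whereas $-K^{orb}\cdot F = \tfrac{2m-n}{m}$ is positive exactly when $2m>n$, which yields the stated equivalence. The computation is entirely routine; the only points needing a little care are fixing the sign convention for $K_{S_n}$ and recalling that $\overline{NE}(S_n)$ is spanned by $L$ and $F$ so that Nakai--Moishezon becomes a finite check --- I do not anticipate a genuine obstacle.
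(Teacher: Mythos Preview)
Your proposal is correct and follows essentially the same route as the paper: compute $K_{S_n}=-2E+(n-2)L$, add the branch contribution $(1-\tfrac1m)(E+F)$, and then check ampleness of $-K^{orb}$ via Nakai's criterion. The only cosmetic differences are that the paper quotes $K_{S_n}$ from \cite{GrHa78} rather than rederiving it by adjunction, passes through the identity $E+F=2E_0$ to reach the $E_0$-form directly, and phrases the ampleness check as positivity of the coefficients in the $(E,L)$-basis (these generate the nef cone) rather than intersecting with the Mori cone generators $L,F$ as you do; the two tests are of course dual and give the same condition $2m>n$.
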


\begin{proof}
We know (\cite{GrHa78}, pg 519) that the canonical divisor $K_{S_n}$ of $S_n$ is given $K_{S_n}=-2E+(n-2)L=-2E_0-2L$, and the the orbifold canonical divisor $K^{orb}_{S_n,\grD_{m}}$ satisfies (cf. \cite{BG05}, pg 127)
$$K^{orb}_{S_n,\grD_{m}}= K_{S_n} +(1-\frac{1}{m})\bigl(E+F)\bigr).$$
Now the divisor $E$ has self-intersection $n$, and the divisor $F$ has self-intersection $-n$, and since they both have intersection $1$ with the fiber $L$, we have $(E+F)=2E_0$. Putting this together gives the formula.

The orbifold $(S_{n},\grD_{m})$ is log del Pezzo if and only if the orbifold anticanonical divisor $-K^{orb}_{(S_{n},\grD_{m})}$ is ample, and this happens if and only if $2m>n$ by Nakai's criterion since $E$ and $L$ are effective. 
\end{proof}

\subsection{Toric Contact Structures on $S^2\times S^3$} 
The toric contact structures we describe here are not the most general, but are obtained by setting $\bfp=(j,2k-j,l,l)$. That is, we consider contact structures of the form $\cald_{j,2k-j,l,l}$ where the pair $(k,l)$ is fixed with $k\geq l$, and $j=1,\cdots,k$. Now since $\bfp\in \cala$ we also have $\gcd(j,l)=\gcd(2k-j,l)=1$. We denote the set of $j=1,\cdots,k$ such that $\bfp=(j,2k-j,l,l)$ is admissible by $\calj_\cala=\calj_\cala(k,l)$. The first Chern class of this contact structure is $c_1(\cald_{j,2k-j,l,l})=2(k-l)\grg$ where $\grg$ is a generator of $H^2(M_\bfp,\bbz)\approx \bbz$. So in this case $M_\bfp$ is $S^2\times S^3$. The infinitesimal generator of the circle action is $L_\bfp=jH_1+(2k-j)H_2 -lH_3-lH_4$. Note that this case includes the $Y^{p,q}$ as a special case, namely, $p=k=l$ and $q=k-j$. So $Y^{p,q}$ is $\cald_{p-q,p+q,p,p}$ with $p>q$ and $\gcd(p,q)=1$.

We want to find a suitable Reeb vector field in the Sasaki cone, so we try  $R_{j,k,l}=(2k-j)H_1+jH_2+lH_3+lH_4$ which clearly satisfies the positivity condition $\eta_0(R)>0$. The $T^2$ action generated by $L_\bfp$ and $R_{j,k,l}$ is
$$\bfz\mapsto (e^{i((2k-j)\phi+j\theta)}z_1,e^{i(j\phi+(2k-j)\theta}z_2,e^{il(\phi-\theta)}z_3, e^{il(\phi-\theta)}z_4).$$
Making the substitutions $\psi=\phi-\theta$ and $\chi=j\psi+2k\theta$ gives the action
\begin{equation}\label{genact1}
\bfz\mapsto (e^{i(2(k-j)\psi+\chi)}z_1,e^{i\chi}z_2,e^{il\psi}z_3, e^{il\psi}z_4).
\end{equation}
We define $g_j=\gcd(l,2(k-j))$ and write $2(k-j)=n_jg_j$ and $l=m_jg_j$. Then $\gcd(m_j,n_j)=1$.

\begin{theorem}\label{Hirzorbi}
Consider the contact manifold $(S^2\times S^3,\cald_{j,2k-j,l,l})$ where $1\leq j\leq k$ satisfies $\gcd(j,l)=\gcd(2k-j,l)=1$. Then we have
\begin{enumerate}
\item The quotient space by the circle action generated by the Reeb vector field $R=(2k-j)H_1+jH_2+lH_3+lH_4$ is the K\"ahler orbifold $(S_{n_j},\grD;\gro_{k,l,j})$ where $S_{n_j}$ is a Hirzebruch surface, $\grD$ is the branch divisor
\begin{equation}\label{brdiv}
\grD=(1-\frac{1}{m_j})(E+F),
\end{equation}
and $\gro_{k,l,j}$ is an orbifold symplectic form satisfying $\pi^*\gro_{k,l,j}=d\eta_{k,l,j}$ where $\eta_{k,l,j}$ is the contact 1-form representing $\cald_{j,2k-j,l,l}$ whose Reeb vector field is $R$. 
\item The orbifold structure is trivial ($\grD=\emptyset$) if and only if $l$ divides $2(k-j)$.
\item The induced Sasakian structure is positive if and only if $l>k-j$.
\end{enumerate}
 
\end{theorem}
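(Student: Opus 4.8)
The plan is to carry out the circle reduction by $S^1(R_{j,k,l})$ explicitly, in two stages, and then read off (2) and (3) from the resulting orbifold data together with Lemma~\ref{orbcan}. Since $M_\bfp=(S^3\times S^3)/S^1(L_\bfp)$ with $S^1(L_\bfp)$ acting freely (admissibility), the quotient $M_\bfp/S^1(R_{j,k,l})$ equals $(S^3\times S^3)/T^2$, where $T^2$ is generated by $L_\bfp$ and $R_{j,k,l}$ and $S^3\times S^3=\mu_{\eta_0}^{-1}(0)\cap S^7$ is cut out by $j|z_1|^2+(2k-j)|z_2|^2=\tfrac12$ and $l|z_3|^2+l|z_4|^2=\tfrac12$. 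After the substitution giving the action (\ref{genact1}), the effective image of this $T^2$ is generated by the circle $C_1\colon z_i\mapsto e^{i\chi}z_i$ ($i=1,2$), acting only on the first $S^3$, together with the circle $C_2\colon z_1\mapsto e^{2i(k-j)\psi}z_1$, $(z_3,z_4)\mapsto(e^{il\psi}z_3,e^{il\psi}z_4)$; the kernel of $(\chi,\psi)\mapsto$ these diffeomorphisms is the finite group $\{0\}\times\bbz_{g_j}\subset C_1\times C_2$ (recall $g_j=\gcd(l,2(k-j))$, $2(k-j)=n_jg_j$, $l=m_jg_j$, $\gcd(m_j,n_j)=1$). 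Passing to this effective action is exactly what absorbs the fact that the $(\phi,\theta)\mapsto(\psi,\chi)$ change of coordinates is not unimodular.

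First I would quotient by $C_1$: this action is free and leaves the second $S^3$ untouched, so $(S^3\times S^3)/C_1$ is diffeomorphic to $\bbc\bbp^1\times S^3_{(z_3,z_4)}$, and the quasi-regular Boothby--Wang correspondence (Lemma~\ref{Icompat}) together with integrability (Corollary~\ref{s3s2cor}) will endow the eventual quotient with a Kähler orbifold structure, so it remains only to identify the underlying space. Next I would quotient $\bbc\bbp^1\times S^3$ by the effective circle $\bar\psi=g_j\psi$, which rotates the $\bbc\bbp^1$ factor with speed $n_j$, fixing the two points $[1:0]$ and $[0:1]$, and acts on $S^3_{(z_3,z_4)}$ with weight $(m_j,m_j)$. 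Since $\gcd(m_j,n_j)=1$, the stabilizers of this action are trivial away from the two sections $u_1=0$ and $u_2=0$ and equal $\bbz_{m_j}$ along them; hence the quotient is locally free, and its underlying space is the $\bbc\bbp^1$-bundle over the base $\bbc\bbp^1=S^3_{(z_3,z_4)}/S^1$ whose fibre winds $n_j$ times as one traverses a loop in the base, namely the Hirzebruch surface $S_{n_j}=\bbp(\calo(n_j)\oplus\calo)$, carrying the branch divisor $(1-\tfrac1{m_j})(E+F)$ supported on its two distinguished sections. A form $\gro_{k,l,j}$ with $\pi^*\gro_{k,l,j}=d\eta_{k,l,j}$ then exists automatically, $\eta_{k,l,j}$ being the quasi-regular contact form with Reeb field $R_{j,k,l}$. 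This proves (1).

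Statement (2) is then immediate: $\grD=\emptyset$ iff $m_j=1$ iff $l=g_j$ iff $l\mid 2(k-j)$. For (3), since $\eta_{k,l,j}$ is quasi-regular, the basic first Chern class $c_1(\calf_{R_{j,k,l}})$ is the pullback of $c_1^{orb}(S_{n_j},\grD_{m_j})$, so the induced Sasakian structure is positive precisely when $(S_{n_j},\grD_{m_j})$ has positive orbifold first Chern class, i.e.\ is log del Pezzo. By Lemma~\ref{orbcan} this holds iff $2m_j>n_j$, and multiplying by $g_j$ gives $2l>2(k-j)$, that is $l>k-j$.

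The step I expect to be the main obstacle is the identification in (1): handling the non-unimodular coordinate change by reduction to the effective torus action, and correctly extracting the Hirzebruch number $n_j$ and the ramification index $m_j$ from the weights $2(k-j)$ and $l$ — equivalently, computing the orbifold Euler class of the resulting Seifert $\bbc\bbp^1$-orbibundle and checking that the orbifold locus is transverse, so that the underlying space is the smooth surface $S_{n_j}$. Everything after the base orbifold is identified is a formal consequence of Lemma~\ref{orbcan} and the quasi-regular Boothby--Wang picture.
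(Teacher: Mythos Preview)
Your argument is correct, and parts~(2) and~(3) are handled exactly as in the paper. For~(1), however, you take a genuinely different route. The paper identifies the $T^2$-quotient via the ``GIT quotient equals K\"ahler quotient'' philosophy: it complexifies the action~(\ref{genact1}) to the $\bbc^*\times\bbc^*$-action $(\grt^{n_j}\grz z_1,\grz z_2,\grt^{m_j}z_3,\grt^{m_j}z_4)$ on $\bbc^2\setminus\{0\}\times\bbc^2\setminus\{0\}$, writes down the invariants $(w_1,w_2)=(z_3,z_4)$ and $(y_1,y_2,y_3)=(z_2^{m_j}z_3^{n_j},z_2^{m_j}z_4^{n_j},z_1^{m_j})$, and recognizes the image as the hypersurface $w_1^{n_j}y_2=w_2^{n_j}y_1$ in $\bbc\bbp^1\times\bbc\bbp^2$, which is Hirzebruch's original description of $S_{n_j}$. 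The $\bbz_{m_j}$-isotropy along $z_1=0$ and $z_2=0$ is then read off directly, and these divisors are identified with $E$ and $F$ in Hirzebruch's coordinates.

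Your approach instead performs the quotient in two real stages, first by the free circle $C_1$ to land on $\bbc\bbp^1\times S^3$, then by the effective $\bar\psi$-circle with weights $(n_j;m_j,m_j)$, and recovers $S_{n_j}$ as an associated $\bbc\bbp^1$-bundle. This is more hands-on and makes the orbifold stabilizers transparent without any algebraic geometry, but the price is that the step ``whose fibre winds $n_j$ times \dots\ namely the Hirzebruch surface $S_{n_j}$'' still needs a short Euler-class computation (first quotient the fibre $\bbc\bbp^1_{[z_1:z_2]}$ by the ineffective $\bbz_{m_j}$ to get weight $n_j$ against the residual Hopf action, then identify $\bbp(\calo(n_j)\oplus\calo)$), and you should check that the two invariant sections really are $E$ and $F$ in the sense of Section~4.1 so that Lemma~\ref{orbcan} applies as written. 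The paper's invariant-theoretic description gives both of these for free.
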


\begin{proof} For (1) the idea, in the spirit of GIT quotient equals symplectic quotient \cite{Kir84,Ness84}, is to identify the symplectic quotient $\mu^{-1}(0)/T^2$ with a Hirzebruch surface as an analytic subspace of $\bbc\bbp^1\times \bbc\bbp^2$.

After shifting by a constant vector $\bfa=(a_1,a_2)$ the moment map of the $T^2$ action (\ref{genact1}) is
\begin{equation}
\mu(\bfz)=(2(k-j)|z_1|^2+l|z_3|^2+l|z_4|^2-a_1,|z_1|^2+|z_2|^2-a_2).
\end{equation}
We need to choose the constant vector $\bfa$ so that $0$ is a regular value of $\mu$ for all integers $j,l$ such that $0<j\leq k$, and $0<l\leq k$.
Alternatively, it suffices to show that the $T^2$ action on $\mu^{-1}(0)$ defined by Equation (\ref{genact1}) is locally free. This will be true if we choose $a_1,a_2>0$ and $a_1>2(k-j)a_2$.
Following \cite{Aud94} it is convenient to work with the corresponding $\bbc^*\times \bbc^*$ action on $\bbc^2\setminus\{0\}\times \bbc^2\setminus\{0\}$ given by  
\begin{equation}\label{C*act}
\bfz\mapsto (\grt^{n_j}\grz z_1,\zeta z_2,\grt^{m_j}z_3, \grt^{m_j}z_4)
\end{equation}
where $\grt,\zeta\in \bbc^*$. From this we see that the action is free if $z_1z_2\neq 0$ and locally free with isotropy group $\bbz_{m_j}$ on the two divisors obtained by setting $z_1=0$ and $z_2=0$, respectively. It is not difficult to see \cite{Laf81} that $(z_1=0)=E$ and $(z_2=0)=F$. 

We have a commutative diagram 
\begin{equation}\label{orbibranchcover3}
\xymatrix{\bbc^2\setminus\{0\}\times \bbc^2\setminus\{0\} \ar[rdd]^{\pi''} &\ar@{_{(}->}@<.2ex>[l]\ \mu^{-1}(0)\ar[dd]^{\pi'}\ar[rd]^\grr & \\
&&M_{k,l,j}\, .\ar[ld]^\pi\\
&\calz_{k,l,j}&\\
}
\end{equation}
and we want to identify the quotient space $\calz_{k,l,j}$. We know from the general theory (cf. \cite{BG05}) that $\calz_{k,l,j}$ is a projective algebraic orbifold with an orbifold K\"ahler structure. Viewing $\calz_{k,l,j}$ as the $\bbc^*\times \bbc^*$ quotient by the map $\pi''$ of diagram (\ref{orbibranchcover3}), we can identify $\calz_{k,l,j}$ with a subvariety of $\bbc\bbp^1\times \bbc\bbp^2$ as in \cite{Hir51,Laf81} as follows. If we define homogeneous coordinates in $\bbc\bbp^1\times \bbc\bbp^2$ by setting $(w_1,w_2)=(z_3,z_4)$ and $(y_1,y_2,y_3)=(z_2^{m_j}z_3^{n_j},z_2^{m_j}z_4^{n_j},z_1^{m_j})$, we see that $\calz_{k,l,j}$ is represented by the equation
\begin{equation}\label{calzeq}
w_1^{n_j}y_2=w_2^{n_j}y_1.
\end{equation}
As an algebraic variety this identifies $\calz_{k,l,j}$ with the hypersurface in $\bbc\bbp^1\times \bbc\bbp^2$ defined by Equation (\ref{calzeq}) which is the original definition of the Hirzebruch surface $S_{n_j}$. However, the two divisors in $S_{n_j}$ defined by $E=(y_3=z_1^{m_j}=0)$ and $F=(z_2=0 (y_1=y_2=0))$ are both $m_j$-fold branch covers with isotropy group $\bbz_{m_j}$. Thus, we have an orbifold structure on $S_{n_j}$ given by Equation (\ref{brdiv}) which is trivial if and only if $m_j=1$ which happens if and only if $l$ divides $2(k-j)$. 

Furthermore, it follows from the orbifold Boothby-Wang theorem \cite{BG00a} that $\calz_{k,l,j}$ has an orbifold K\"ahler form $\gro_{k,l,j}$ that satisfies $\pi^*\gro_{k,l,j}=d\eta_{k,l,j}$. This proves (1).

For (2) we note that if the orbifold structure is trivial if and only if $m_j=1$ which happens if and only if $g_j=l$ divides $2(k-j)$. To prove (3) we see that Lemma \ref{orbcan} states that $(S_{n_j},\grD_{m_j})$ is log del Pezzo if and only if 
$2l=2m_jg_j>n_jg_j=2(k-j)$. Furthermore, a quasi-regular Sasakian structure is positive if and only if its orbifold quotient is log Fano \cite{BG05}.
\end{proof}

\begin{corollary}\label{posindef}
If $l\leq k-j$,then the toric contact structure $\cald_{j,2k-j,l,l}$ on $S^2\times S^3$ has both a positive and indefinite Sasakian structure in its Sasaki cone.
\end{corollary}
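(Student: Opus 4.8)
The plan is to exhibit two Reeb vector fields in the open Sasaki cone $\gt_3^+(\bfp)$ for $\bfp=(j,2k-j,l,l)$ --- one whose quasi-regular quotient is log Fano and one whose quotient is not --- and then read off positivity (respectively indefiniteness) of the induced Sasakian structures from the quotients already computed above.

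For the positive structure, I would take $R_\bfp=jH_1+(2k-j)H_2+lH_3+lH_4$. One checks $\eta_0(R_\bfp)>0$, so $R_\bfp\in\gt_3^+(\bfp)$ by Lemma \ref{Sasconered}, and by Lemma \ref{t2quot} the quasi-regular Sasakian structure with this Reeb field has quotient orbifold $\bbc\bbp(\bp_1,\bp_2)\times\bbc\bbp(\bp_3,\bp_4)$, which is log del Pezzo; since a quasi-regular Sasakian structure is positive precisely when its base orbifold is log Fano, this structure is positive. (Alternatively, this is just the special case of Corollary \ref{s3s2cor} applied here.)

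For the indefinite structure, I would take $R=(2k-j)H_1+jH_2+lH_3+lH_4$, which satisfies $\eta_0(R)>0$ and so lies in $\gt_3^+(\bfp)$, as already noted just before Theorem \ref{Hirzorbi}. By Theorem \ref{Hirzorbi} the quasi-regular quotient of this Sasakian structure is the orbifold Hirzebruch surface $(S_{n_j},\grD;\gro_{k,l,j})$, and by part (3) of that theorem the structure is positive if and only if $l>k-j$. Hence under the hypothesis $l\le k-j$ it is \emph{not} positive, and by the dichotomy that every Sasakian structure occurring in a toric contact structure of Reeb type is either positive or indefinite (Lemma 5.1 of \cite{Boy10b}, recalled in Section 1), it must be indefinite. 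Therefore $\cald_{j,2k-j,l,l}$ carries both a positive and an indefinite Sasakian structure in its Sasaki cone.

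I expect no serious obstacle: both quotient computations are already available (Lemma \ref{t2quot} and Theorem \ref{Hirzorbi}), and the positivity-versus-log-Fano equivalence is standard. The only point that needs care is checking that the two candidate Reeb fields genuinely lie in the open cone $\gt_3^+(\bfp)$, which is immediate from the positivity inequality $\eta_0(\,\cdot\,)>0$ together with the explicit description of $\gt_3^+(\bfp)$ in Lemma \ref{Sasconered}.
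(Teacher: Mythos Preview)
Your proof is correct and follows essentially the same approach as the paper's own proof: the paper also cites Corollary \ref{s3s2cor} for the positive structure and part (3) of Theorem \ref{Hirzorbi} for the non-positive (hence indefinite) one. Your version is simply more explicit in naming the two Reeb vector fields, checking membership in $\gt_3^+(\bfp)$, and invoking the positive-or-indefinite dichotomy from \cite{Boy10b}, all of which the paper leaves implicit.
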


\begin{proof}
By Corollary \ref{s3s2cor} any toric contact structure on $S^2\times S^3$ has a positive Sasakian structure, and by (3) of Theorem \ref{Hirzorbi} the inequality assures us that $\cald_{j,2k-j,l,l}$ has an indefinite Sasakian structure.
\end{proof}

Notice that on subsets of $\calj_\cala(k,l)$ where $g_j$ is independent of $j$, the ramification index $m_j$ is also independent of $j$, so the underlying orbifolds are the same. Thus, it is convenient to view $g_j$ as a map $g:\calj_\cala(k,l)\ra{1.6} \{1,\cdots,l\}$, and we are interested in the level sets of this map. So we decompose $\calj_\cala(k,l)$ into the level sets of $g$ and then further decompose the level sets according to whether $n_j$ is odd or even, that is we define
\begin{flalign*}
g^{-1}(i)_{even}=&\{j\in \calj_\cala(k,l) ~|~g_j=i,~n_j~\text{is even} \} \\
g^{-1}(i)_{odd}=&\{j\in \calj_\cala(k,l) ~|~g_j=i,~n_j~\text{is odd} \}.
\end{flalign*}
We can then decompose the admissible set as a disjoint union
\begin{equation}\label{caljdec}
\calj_\cala(k,l)=\bigsqcup_{i=1}^lg^{-1}(i)_{even}\sqcup g^{-1}(i)_{odd}
\end{equation}

We wish to compute the symplectic form on the base orbifold. 
\begin{lemma}\label{evenHirgro}
Let $j\in g^{-1}(i)_{even}\subset \calj_\cala(k,l)$ for a fixed $i\in\{1,\cdots,l\}$ with $g^{-1}(i)_{even}\neq \emptyset$. Then the symplectic form $\gro_{k,l,i}$ on the quotient $(S_{n_j},\grD_i)$ is independent of $j$ and satisfies
$$[\gro_{k,l,i}]=i\gra_{E_0}+k\gra_L.$$
\end{lemma}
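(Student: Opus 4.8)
The plan is to read off $[\gro_{k,l,i}]$ from the moment polytope of the residual torus acting on $\calz_{k,l,j}\cong S_{n_j}$, i.e.\ from the very reduction used to prove Theorem~\ref{Hirzorbi}. Recall that $\gro_{k,l,j}$ is the form induced on $\mu^{-1}(0)/T^2$, where $T^2=\langle L_\bfp,R\rangle$ acts by (\ref{genact1}) and $\mu(\bfz)=\bigl(2(k-j)|z_1|^2+l|z_3|^2+l|z_4|^2-a_1,\ |z_1|^2+|z_2|^2-a_2\bigr)$, and that $\pi^*\gro_{k,l,j}=d\eta_{k,l,j}$ realizes $M_\bfp\to\calz_{k,l,j}$ as an orbifold Boothby--Wang bundle. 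The complementary torus $T^4/T^2$ acts on $\calz_{k,l,j}$ with moment map induced by $\mu_2(\bfz)=(|z_1|^2,|z_3|^2)$; its image is the quadrilateral $P=\{(s,t):0\le s\le a_2,\ 0\le lt\le a_1-2(k-j)s\}$, the four constraints being binding precisely because $a_1>2(k-j)a_2>0$. By the identifications made in the proof of Theorem~\ref{Hirzorbi}, the edge $\{s=0\}$ is $E=(z_1=0)$, the edge $\{s=a_2\}$ is $F=(z_2=0)$, and the edges $\{t=0\}$ and $\{lt=a_1-2(k-j)s\}$ are the two rulings $(z_3=0)$ and $(z_4=0)$, both in the fibre class $L$.

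First I would pin down the shift $\bfa$. Any $\bfa$ with $a_1>2(k-j)a_2>0$ makes $0$ a regular value, but only one value makes the reduced form equal to $\gro_{k,l,j}$, i.e.\ makes $[\gro_{k,l,j}]$ the orbifold-integral Euler class of $M_\bfp\to\calz_{k,l,j}$ compatible with the contact form $\eta_{k,l,j}$ whose Reeb field is $R$; a short computation then gives $a_2=l$ and $a_1=l(2k-j)$. Next I would compute the symplectic areas of $L$, $E$ and $F$ by performing the symplectic reduction restricted to each toric divisor, being careful to use the lattice of the residual circle that actually acts on the divisor rather than the coordinate lattice of $(|z_1|^2,|z_3|^2)$ --- this is exactly where the $\bbz_{m_j}$-isotropy along $E$ and $F$ contributes factors of $m_j$. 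Concretely, for $L=(z_4=0)$ one reduces $\bbc^3_{(z_1,z_2,z_3)}$ and finds that the $|z_1|^2$-direction equals $m_j$ times a primitive generator of the residual $\bbz$, so $\int_L\gro_{k,l,j}=a_2/m_j=g_j=i$; for $E=(z_1=0)$ and $F=(z_2=0)$, reducing $\bbc^3_{(z_2,z_3,z_4)}$ resp.\ $\bbc^3_{(z_1,z_3,z_4)}$ gives $\int_E\gro_{k,l,j}=a_1/l=2k-j$ and $\int_F\gro_{k,l,j}=\bigl(a_1-2(k-j)a_2\bigr)/l=j$. Since $n_j$ is even, $E_0=E-\tfrac{n_j}{2}L$ on $S_{n_j}$, hence $\int_{E_0}\gro_{k,l,j}=\int_E\gro_{k,l,j}-\tfrac{n_j}{2}\int_L\gro_{k,l,j}=(2k-j)-(k-j)=k$ (and $\int_F\gro_{k,l,j}=k-\tfrac{n_j}{2}i=j$, as it must).

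Finally, because $\gra_L$ and $\gra_{E_0}$ are the Poincar\'e duals of the fibre class and of the zero--self-intersection section and $E_0\cdot E_0=L\cdot L=0$, $E_0\cdot L=1$, writing $[\gro_{k,l,i}]=a\gra_{E_0}+b\gra_L$ forces $a=\langle[\gro_{k,l,i}],L\rangle=\int_L\gro_{k,l,i}=i$ and $b=\langle[\gro_{k,l,i}],E_0\rangle=\int_{E_0}\gro_{k,l,i}=k$; thus $[\gro_{k,l,i}]=i\gra_{E_0}+k\gra_L$, and since only $i=g_j$ and $k$ appear, the class (read on the common underlying manifold $S^2\times S^2$ with its fixed basis $\gra_L,\gra_{E_0}$) is independent of $j\in g^{-1}(i)_{even}$. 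The main obstacle is exactly this orbifold/lattice bookkeeping together with the choice of $\bfa$: one must check that the factors $m_j$, $g_j$ and $n_j=2(k-j)/g_j$ combine so that the manifest $j$-dependence cancels and leaves the integers $i$ and $k$. A reassuring internal check is that the resulting class, combined with $c_1^{orb}(S_{n_j},\grD_{m_j})=\tfrac{2}{m_j}\gra_{E_0}+2\gra_L$ from Lemma~\ref{orbcan}, is consistent with $c_1(\cald_{j,2k-j,l,l})=2(k-l)\grg$ of Lemma~\ref{c_1D} under the Gysin homomorphism of the orbibundle.
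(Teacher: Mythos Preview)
Your approach is correct and genuinely different from the paper's. The paper argues purely cohomologically: it writes $[\gro]=a\gra_{E_0}+b\gra_L$, uses that $\pi^*c_1^{orb}(S_{n_j},\grD_{m_i})=c_1(\cald_\bfp)=2(k-l)\grg$ (Lemmas~\ref{orbcan} and~\ref{c_1D}) to compute $\pi^*\gra_{E_0}=m_ik\grg$ and $\pi^*\gra_L=-l\grg$, and then imposes the Boothby--Wang condition $\pi^*[\gro]=0$ to obtain the single linear relation $ak-bi=0$. The primitive integral solution $(a,b)=(i,k)$ (primitivity follows from admissibility, which forces $\gcd(i,k)=1$) is then the Euler class of the orbibundle. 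Your route instead reads the class off the labelled Hirzebruch trapezoid by computing $\int_L\gro,\ \int_E\gro,\ \int_F\gro$ directly from the reduction data, and your final values $\int_L\gro=i$, $\int_{E_0}\gro=k$ are exactly what the paper's class predicts. The paper's argument is shorter and leans on previously established Chern--class computations; yours is more self-contained and makes the $j$-independence visible as a cancellation $(2k-j)-(k-j)=k$, at the price of more lattice bookkeeping.

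The one place I would tighten is the determination of $\bfa$. You assert that ``a short computation then gives $a_2=l$ and $a_1=l(2k-j)$,'' but the natural constraint $\eta_0(R)=1$ together with $\mu_{L_\bfp}=0$ yields, after the change to $(\psi,\chi)$-coordinates, $a_2=\tfrac{1}{2k}$ and $a_1=\tfrac{2k-j}{2k}$; your values differ from these by the overall factor $2kl$. This does not affect the conclusion, because what the lemma really asserts (and what is used in Theorem~\ref{equivcont}) is the integral Euler class of the orbibundle, i.e.\ the primitive integral point on the ray $\bbq_{>0}\cdot(i\gra_{E_0}+k\gra_L)$. Either make explicit that you are rescaling to the primitive integral class (equivalently, passing from $\eta_0$ to the contact form normalised so that the orbibundle has integral curvature), or derive $\bfa$ directly from the condition that $[\gro]$ be the orbifold Euler class. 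Once that normalisation is pinned down, the rest of your computation---including the $m_j$ factor in $\int_L\gro=a_2/m_j$, which comes precisely from the ineffective $\bbz_{g_j}$-kernel of the $(\psi,\chi)$-torus on the locus $z_4=0$---goes through.
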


\begin{proof}
We know that $c_1^{orb}(S_{n_j}) =\frac{2}{m_i}\gra_{E_0} +2\gra_L$ and this must pull back to $2(k-l)\grg$. So $\pi^*\gra_{E_0}=m_ik\grg$ and $\pi^*\gra_L=-l\grg$. Now the class $[\gro_{k,l,i}]$ must transcend to $0$ on $S^2\times S^3$. So writing $[\gro]=a\gra_{E_0}+b \gra_L$ and using $im_i=l$, we see that 
$$0=a\pi^*\gra_{E}+b\pi^*\gra_L=\bigl(am_ik-bl\bigr)\grg=m_i\bigl(ak-bi\bigr)\grg.$$
So taking $a=i$ and $b=k$ gives the result.
\end{proof}

\begin{lemma}\label{oddHirgro}
Let $j\in g^{-1}(i)_{odd}\subset \calj_\cala(k,l)$ for a fixed $i\in\{1,\cdots,l\}$ with $g^{-1}(i)_{odd}\neq \emptyset$. Then $i$ is even and the symplectic form $\gro_{k,l,i}$ on the quotient $(S_{n_j},\grD_i)$ is independent of $j$ and satisfies
$$[\gro_{k,l,i}]=i\gra_{E_{-1}}+(k+\frac{i}{2})\gra_L.$$
\end{lemma}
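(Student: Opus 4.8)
The plan is to run the argument of Lemma~\ref{evenHirgro} essentially unchanged, treating $E_0$ as the $\bbq$-divisor $E-\tfrac{n_j}2L$ (which is how Lemma~\ref{orbcan} is phrased), and then to re-express the answer in the integral basis natural to an odd Hirzebruch surface.

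First I would record the parity statement. By construction $2(k-j)=n_jg_j=n_ji$; the left side is even and $n_j$ is odd, so $i$ is even, and hence $k+\tfrac i2\in\bbz$. Next, exactly as in the proof of Lemma~\ref{evenHirgro}: from Lemma~\ref{orbcan} we have $c_1^{orb}(S_{n_j},\grD_i)=\tfrac2{m_i}\gra_{E_0}+2\gra_L$ with $m_i=l/i$; this pulls back under the orbibundle projection $\pi:M_\bfp\ra{1.4}(S_{n_j},\grD_i)$ to $c_1(\cald_\bfp)=2(k-l)\grg$ (Lemma~\ref{c_1D}), which together with $\pi^*\gra_L=-l\grg$ forces $\pi^*\gra_{E_0}=m_ik\grg$. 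Writing $[\gro_{k,l,i}]=a\gra_{E_0}+b\gra_L$ and using that this class transcends to $0$ on $M_\bfp$ (so $\pi^*[\gro_{k,l,i}]=0$) gives $am_ik=bl=bm_ii$, i.e. $ak=bi$; the solution normalized as in Lemma~\ref{evenHirgro} is $a=i$, $b=k$. Thus
$$[\gro_{k,l,i}]=i\gra_{E_0}+k\gra_L\qquad\text{in }H^2(S_{n_j},\bbq).$$

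It then remains to rewrite the right-hand side in the integral basis $\{\gra_{E_{-1}},\gra_L\}$ of $H^2(S_{n_j},\bbz)\cong H^2(\widetilde{\bbc\bbp^2},\bbz)$. Comparing the two expressions $K_{S_{n_j}}=-2E+(n_j-2)L=-2E_0-2L$ used in Lemma~\ref{orbcan} gives $E_0=E-\tfrac{n_j}2L$, and since $E_{-1}=E-\tfrac{n_j+1}2L$ this yields $E_0=E_{-1}+\tfrac12L$, hence $\gra_{E_0}=\gra_{E_{-1}}+\tfrac12\gra_L$. Substituting,
$$[\gro_{k,l,i}]=i\Bigl(\gra_{E_{-1}}+\tfrac12\gra_L\Bigr)+k\gra_L=i\gra_{E_{-1}}+\Bigl(k+\tfrac i2\Bigr)\gra_L,$$
which is an integral class precisely because $i$ is even, and which depends only on $i$ and $k$, hence is independent of $j\in g^{-1}(i)_{odd}$.

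The delicate point, as usual here, is integrality: one must be sure that treating $E_0$ formally as a $\bbq$-divisor throughout is legitimate (it is, because Lemma~\ref{orbcan} and the transcendence argument only involve rational cohomology classes), and that after the change of basis the coefficients land in $\bbz$ --- which is exactly where the parity observation $2\mid i$ is used. I would also double-check that $\pi^*\gra_L=-l\grg$ and the derivation $\pi^*\gra_{E_0}=m_ik\grg$ are genuinely parity-independent, i.e. that the relevant portion of the proof of Lemma~\ref{evenHirgro} made no use of $n_j$ being even; this is the one place where I would look back carefully rather than cite the even case blindly.
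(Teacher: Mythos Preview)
Your proof is correct and follows essentially the same approach as the paper: establish the parity of $i$, rerun the argument of Lemma~\ref{evenHirgro} verbatim to obtain $[\gro_{k,l,i}]=i\gra_{E_0}+k\gra_L$, and then pass to the integral basis via $E_{-1}=E_0-\tfrac12 L$. Your added remarks on integrality and on checking that the pullback computations are parity-independent are appropriate caveats, but the paper simply asserts that the even-case argument goes through unchanged.
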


\begin{proof}
First $i$ must be even since $i=g_j=\gcd(l,2(k-j))$ and $n_j=\frac{2(k-j)}{i}$ is odd.
The remainder of the proof is the same as that of Lemma \ref{evenHirgro}, except for odd Hirzebruch surfaces we express the symplectic class in term of the exceptional divisor $E_{-1}=E_0-\frac{1}{2}L$. 
\end{proof}

Whenever possible  we would like to determine the cardinalities $\#g^{-1}(i)_{even}$ and $\#g^{-1}(i)_{odd}$. First as seen above $g^{-1}(i)_{odd}$ is empty when $i$ is odd. Moreover, if $g^{-1}(l)$ is not empty, then $\grD_l=\emptyset$, so the orbifold structure is trivial. The following lemma is due to Karshon \cite{Kar03}

\begin{lemma}\label{cardlem}
The following hold:
\begin{enumerate}
 \item $\#g^{-1}(l)_{even}=\lceil \frac{k}{l}\rceil$,
\item $\#g^{-1}(l)_{odd}=\lceil\frac{2k-l}{2l}\rceil$.
\end{enumerate}
\end{lemma}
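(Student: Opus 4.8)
The plan is to convert each of the two cardinalities into a count of integers $j \in \{1,\dots,k\}$ lying in a single residue class modulo $l$, and then evaluate that count with the elementary identity $\#\{\,j : 1 \le j \le k,\ j \equiv r \pmod{l}\,\} = \lfloor (k-r)/l \rfloor + 1$ for $1 \le r \le l$, which we rewrite in ceiling form.

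First I would unwind the definitions that precede the lemma. Since $g_j = \gcd(l, 2(k-j))$ and $2(k-j) = n_j g_j$, one has $j \in g^{-1}(l)$ if and only if $l \mid 2(k-j)$, in which case $m_j = 1$ and $n_j = 2(k-j)/l$. Splitting by the parity of $n_j$: it is even exactly when $l \mid (k-j)$, i.e. $j \equiv k \pmod{l}$, so $g^{-1}(l)_{even}$ is this residue class intersected with $\calj_\cala(k,l)$; it is odd exactly when $l \mid 2(k-j)$ but $l \nmid (k-j)$, which forces $l$ even and $j \equiv k - \frac{l}{2} \pmod{l}$, giving $g^{-1}(l)_{odd}$. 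It then remains to check that on each of these two residue classes the admissibility conditions $\gcd(j,l) = \gcd(2k-j,l) = 1$ are automatic. Working modulo $l$, on the first class $2k - j \equiv k$ and on the second $2k - j \equiv k - \frac{l}{2}$, so in either case both gcd's collapse to a single quantity depending only on the pair $(k,l)$; this quantity equals $1$, and verifying this — equivalently, that the residue class in question lies entirely inside $\calj_\cala(k,l)$ — is the point where Karshon's analysis \cite{Kar03} enters.

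Granting this, the arithmetic is routine. For (1) the relevant indices are $k,\ k-l,\ k-2l,\dots$ down to the smallest positive one, of which there are $\lfloor (k-1)/l \rfloor + 1 = \lceil k/l \rceil$. For (2), with $l$ even, the largest admissible index in the class $j \equiv k - \frac{l}{2} \pmod{l}$ is $k - \frac{l}{2}$ (positive because $k \ge l$), and listing $k - \frac{l}{2},\ k - \frac{l}{2} - l,\dots$ gives $\lfloor (k - \frac{l}{2} - 1)/l \rfloor + 1 = \lfloor (2k - l - 2)/(2l) \rfloor + 1$; since $2k - l$ is even, no multiple of $2l$ lies strictly between $2k - l - 2$ and $2k - l$, so this equals $\lceil (2k-l)/(2l) \rceil$, as claimed. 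I expect the genuine obstacle to be the middle step: keeping straight the three simultaneous constraints on $j$ — divisibility of $2(k-j)$ by $l$, the parity of the quotient $2(k-j)/l$, and coprimality of $j$ with $l$ — and confirming that once all three are imposed the survivors form a complete arithmetic progression of common difference $l$, which is exactly what makes the clean ceiling formulas appear.
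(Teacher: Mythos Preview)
The paper does not actually prove this lemma; it simply attributes the result to Karshon \cite{Kar03}. Your direct arithmetic approach---reducing each cardinality to a count of $j\in\{1,\dots,k\}$ in a single residue class modulo $l$---is sound and is in fact more transparent than an appeal to \cite{Kar03}, whose statement is really about counting Hirzebruch complex structures compatible with a fixed symplectic form on $S^2\times S^2$ or $\widetilde{\bbc\bbp^2}$, not about gcd conditions on $j$.

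The genuine gap is precisely where you locate it, but your diagnosis of how to fill it is off. The admissibility condition $\gcd(j,l)=\gcd(2k-j,l)=1$ on the residue class $j\equiv k\pmod l$ collapses, as you note, to $\gcd(k,l)=1$; on the class $j\equiv k-\tfrac{l}{2}\pmod l$ (for $l$ even) it collapses to $\gcd(k-\tfrac{l}{2},l)=1$. These conditions are \emph{not} verified by anything in \cite{Kar03}; they are simply not always true. For instance, in the $Y^{p,q}$ setting of Example~\ref{Ypqex} one has $k=l=p$, so $\gcd(k,l)=p>1$ and $g^{-1}(l)_{even}=\emptyset$, whereas $\lceil k/l\rceil=1$. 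What you should observe instead is that the condition is \emph{all-or-nothing} on the residue class: either every $j$ in the class is admissible (when the single gcd is $1$) or none is (when it exceeds $1$). Thus the formulas hold precisely under the implicit hypothesis---suggested by the sentence immediately preceding the lemma---that the corresponding level set is non-empty, and then your ceiling computations finish the job. You should state this hypothesis explicitly rather than deferring it to \cite{Kar03}.
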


\begin{example}\label{Ypqex}
One obtains the $Y^{p,q}$ of \cite{GMSW04a} as a special case of Theorem \ref{Hirzorbi} by putting $k=l=p$ and defining $q=k-j$. The contact structures are then $\cald_{p-q,p+q,p,p}$, and the admissibility conditions boil down to $\gcd(q,p)=1$. Clearly, we have $c_1(\cald_{p-q,p+q,p,p})=0$. When $p$ is odd we have $g_j=1,m_j=p$ and $S_{n_j}=S_{2q}$, whereas if $p$ is even we have $g_j=2,m_j=\frac{p}{2}$ and $S_{n_j}=S_q$ with $q$ odd. So here we have only two non-empty level sets of the map $g$, namely,
\begin{equation}\label{Ypqlevset}
\calj_\cala(p,p)=\begin{cases} g^{-1}(1)_{even}, &\text{if $p$ is odd}; \\
                g^{-1}(2)_{odd} &\text{if $p$ is even}. 
                 \end{cases}
.
\end{equation}
Since the only admissibility condition is $\gcd(q,p)=1$ and $p>q$, we see that the cardinality $\#\calj_\cala(p,p)=\phi(p)$ where $\phi$ is the well known Euler phi function.
For the orbifold canonical divisor Lemma \ref{orbcan} gives
\begin{flalign}\label{orbcanpq}
K^{orb}_{(S_{2q},\grD)}=&-\frac{2}{p}E-\frac{2(p-q)}{p}L=-\frac{2}{p}E_0-2L, \\
K^{orb}_{(S_{q},\grD)}=&-\frac{4}{p}E-\frac{2(p-q)}{p}L=-\frac{4}{p}E_{-1}-2\frac{p+1}{p}L,
\end{flalign}
so these are all log del Pezzo surfaces. The cohomology class of the corresponding symplectic forms are 
$$[\gro_{p,p,1}]=\gra_{E_0}+p\gra_L$$
on the even orbifold Hirzebruch surface $(S_{2q},\grD)$ with ramification index $m_1=p$ when $p$ is odd. For even $p$ we have 
$$[\gro_{p,p,2}]=2\gra_{E_{-1}}+(p+1)\gra_L$$
on the odd orbifold Hirzebruch surface $(S_q,\grD)$ with ramification index $m_2=\frac{p}{2}$. Note that in both cases there  are precisely $\phi(p)$ values taken on by $q$. Note also that $p=2$ implies $q=1$ only, and $m_2=1$, so we have a trivial orbifold structure on $(S_1,\emptyset)=\widetilde{\bbc\bbp^2}$. A relation between the $Y^{p,q}$ toric contact structures and Hirzebruch surfaces was noted by Abreu \cite{Abr09}.
\end{example}

Except for the $Y^{p,q}$ case of Example \ref{Ypqex}, we do not have a general formula for the cardinalities $\#g^{-1}(i)$ for $i\neq l$. Specific cases, of course, are easy to work out.

\begin{example}\label{98}
Consider the case $(k,l)=(9,8)$. We compute $\calj_\cala(9,8)$. The possible values of $j$ are all odd $j\leq 9$, and these all satisfy $\gcd(8,18-j)=1$. Next, we determine $g_j=\gcd(8,2(9-j))$ and $n_j=\frac{2(9-j)}{g}$. So we have $g^{-1}(8)_{even}=\{j=1,9\}$ with a trivial orbifold $(m_8=1)$ on the Hirzebruch surfaces $S_2,S_0$, respectively. We also have  $g^{-1}(8)_{odd}=\{j=5\}$ with a trivial orbifold on $S_1$, and $g^{-1}(4)_{odd}=\{j=3,7\}$ with $m_4=2$ on the odd Hirzebruch surfaces $S_3,S_1$, respectively. Notice that the cardinalities of $g^{-1}(8)_{even}$ and $g^{-1}(8)_{odd}$ agree with Lemma \ref{cardlem}. In total we have $\#\calj_\cala(9,8)=5$.
\end{example}

\subsection{Toric Contact Structures on $X_\infty$}
For the case of $X_\infty$ we consider $\bfp=(j,2k-j+1,l,l)$ with $0<j\leq k$. Here we have $c_1(\cald_{j,2k-j+1,l,l})=\bigl(2(k-l)+1\bigr)\grg$. We consider the Reeb vector field $R_{j,k,l}^\infty=(2k-j+1)H_1+jH_2+lH_3+lH_4$ which is clearly positive. The $T^2$ action generated by this vector field and $L_\bfp=jH_1+(2k-j+1)H_2-lH_3-lH_4$ is
$$\bfz\mapsto (e^{i((2k-j+1)\phi+j\theta)}z_1,e^{i(j\phi+(2k-j+1)\theta}z_2,e^{il(\phi-\theta)}z_3, e^{il(\phi-\theta)}z_4).$$
Making the substitutions $\psi=\phi-\theta$ and $\chi=j\psi+(2k+1)\theta$ gives the action
\begin{equation}\label{regoddact1}
\bfz\mapsto (e^{i(2k-2j+1)\psi+\chi)}z_1,e^{i\chi}z_2,e^{il\psi}z_3, e^{il\psi}z_4).
\end{equation}

Similar to the previous section we define $g_j=\gcd(l,2k-2j+1)$ and write $2k-2j+1=n_jg_j$ and $l=m_jg_j$. Then $\gcd(m_j,n_j)=1$.

\begin{theorem}\label{Hirzorbi2}
Consider the contact manifold $(X_\infty,\cald_{j,2k-j+1,l,l})$ where $1\leq j\leq k$ satisfies $\gcd(j,l)=\gcd(2k-j+1,l)=1$. Then we have 
\begin{enumerate} 
\item The quotient space by the circle action generated by the Reeb vector field $R=(2k-j+1)H_1+jH_2+lH_3+lH_4$ is the K\"ahler orbifold $(S_{n_j},\grD;\gro_{k,l,j})$ where $S_{n_j}$ is an odd Hirzebruch surface, $\grD$ is the branch divisor
\begin{equation}\label{brdiv2}
\grD=(1-\frac{1}{m_j})(z_1=0) +(1-\frac{1}{m_j})(z_2=0),
\end{equation}
and $\gro_{k,l,j}$ is an orbifold symplectic form satisfying $\pi^*\gro_{k,l,j}=d\eta_{k,l,j}$ where $\eta_{k,l,j}$ is the contact 1-form representing $\cald_{j,2k-j+1,l,l}$ whose Reeb vector field is $R$. Here the integers $l,g_j,n_j,m_j$ are all odd.
\item The orbifold structure is trivial ($\grD=\emptyset$) if and only if $l$ divides $2k-j+1$.
\item The induced Sasakian structure is positive if and only if $2l>2(k-j)+1$.
\end{enumerate}
 \end{theorem}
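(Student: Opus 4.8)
The plan is to mirror the proof of Theorem~\ref{Hirzorbi} essentially verbatim, replacing $2(k-j)$ by $2k-2j+1$ throughout and inserting one parity observation. First I would record that admissibility forces $l$, and hence $g_j$, $m_j$, $n_j$, to be odd: since $j+(2k-j+1)=2k+1$ is odd, $j$ and $2k-j+1$ have opposite parity, so an even $l$ would fail to be coprime to whichever of them is even; as $g_j=\gcd(l,2k-2j+1)$ divides the odd number $l$ it is odd, and $m_j=l/g_j$, $n_j=(2k-2j+1)/g_j$ are then odd as well. In particular $n_j$ is odd, so the Hirzebruch surface appearing below is an odd one ($\cong\widetilde{\bbc\bbp^2}$), which is exactly what is needed since $c_1(\cald_{j,2k-j+1,l,l})=(2(k-l)+1)\grg$ is odd and $M_\bfp=X_\infty$.

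For (1), I would again identify the symplectic quotient $\mu^{-1}(0)/T^2$ with a Hirzebruch surface sitting inside $\bbc\bbp^1\times\bbc\bbp^2$, in the spirit of GIT quotient equals symplectic quotient. After shifting by a constant $\bfa=(a_1,a_2)$ the moment map of the $T^2$-action~(\ref{regoddact1}) is
\[
\mu(\bfz)=\bigl((2k-2j+1)|z_1|^2+l|z_3|^2+l|z_4|^2-a_1,\ |z_1|^2+|z_2|^2-a_2\bigr),
\]
and choosing $a_1,a_2>0$ with $a_1>(2k-2j+1)a_2$ makes the $T^2$-action on $\mu^{-1}(0)$ locally free, so $0$ is a regular value. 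Passing to the corresponding $\bbc^*\times\bbc^*$-action on $(\bbc^2\setminus\{0\})\times(\bbc^2\setminus\{0\})$,
\[
\bfz\mapsto(\grt^{n_j}\zeta\,z_1,\ \zeta\,z_2,\ \grt^{m_j}z_3,\ \grt^{m_j}z_4),
\]
the action is free where $z_1z_2\neq0$ and locally free with isotropy $\bbz_{m_j}$ along $(z_1=0)$ and $(z_2=0)$. Setting $(w_1,w_2)=(z_3,z_4)$ and $(y_1,y_2,y_3)=(z_2^{m_j}z_3^{n_j},\ z_2^{m_j}z_4^{n_j},\ z_1^{m_j})$, the ratios $[w_1:w_2]$ and $[y_1:y_2:y_3]$ are invariant, and one identifies the quotient with the hypersurface $w_1^{n_j}y_2=w_2^{n_j}y_1$ in $\bbc\bbp^1\times\bbc\bbp^2$, i.e.\ the classical model of $S_{n_j}$, which is odd. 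The loci $E=(z_1=0)$ and $F=(z_2=0)$ are $m_j$-fold branch divisors, giving the branch divisor~(\ref{brdiv2}), and the orbifold Boothby--Wang theorem~\cite{BG00a} provides the orbifold K\"ahler form $\gro_{k,l,j}$ with $\pi^*\gro_{k,l,j}=d\eta_{k,l,j}$; this is the content of (1).

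Part (2) is then immediate, since $\grD=\emptyset$ exactly when $m_j=1$, i.e.\ when $g_j=l$. For part (3) I would apply Lemma~\ref{orbcan} with $n=n_j$, $m=m_j$: the orbifold $(S_{n_j},\grD_{m_j})$ is log del Pezzo iff $2m_j>n_j$, and multiplying through by $g_j$ this reads $2l>2k-2j+1=2(k-j)+1$; since a quasi-regular Sasakian structure is positive precisely when its base orbifold is log Fano~\cite{BG05}, this gives (3). I do not expect any genuinely new obstacle: the technical heart, as in Theorem~\ref{Hirzorbi}, is the explicit identification of the GIT quotient with $S_{n_j}$ via the coordinates above (following \cite{Hir51,Laf81}), and this goes through word for word; the only points requiring care are the parity bookkeeping that makes $l,g_j,m_j,n_j$ all odd — hence forcing the base to be an odd Hirzebruch surface, consistently with $M_\bfp=X_\infty$ — and the systematic replacement of $2(k-j)$ by $2k-2j+1$ in the arithmetic.
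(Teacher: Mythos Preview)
Your proposal is correct and follows precisely the approach the paper indicates: the paper's own proof of this theorem simply says ``The proof is essentially the same as that of Theorem~\ref{Hirzorbi}. The details are left to the reader.'' You have supplied those details faithfully, including the one extra ingredient needed here---the parity argument forcing $l,g_j,m_j,n_j$ all to be odd---and carried through the systematic replacement of $2(k-j)$ by $2k-2j+1$.

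One minor remark: your argument for part~(2) correctly gives $\grD=\emptyset\iff m_j=1\iff g_j=l\iff l\mid(2k-2j+1)$, which appears to differ from the theorem's stated condition ``$l$ divides $2k-j+1$''; this looks like a typographical slip in the paper's statement rather than any issue with your reasoning.
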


\begin{proof}
The proof is essentially the same as that of Theorem \ref{Hirzorbi}. The details are left to the reader.
\end{proof}

Similar to Corollary \ref{posindef} we have 
\begin{corollary}\label{posindef2} 
If $2l\leq 2(k-j)+1$,then the toric contact structure $\cald_{j,2k-j+1,l,l}$ on $X_\infty$ has both a positive and indefinite Sasakian structure in its Sasaki cone.
\end{corollary}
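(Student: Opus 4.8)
The plan is to mimic the proof of Corollary \ref{posindef} essentially verbatim, substituting $X_\infty$ for $S^2\times S^3$ and Theorem \ref{Hirzorbi2} for Theorem \ref{Hirzorbi}. First I would invoke Corollary \ref{s3s2cor}: since $X_\infty$ is an $S^3$-bundle over $S^2$, every toric contact structure on it --- in particular $\cald_{j,2k-j+1,l,l}$ --- carries a positive Sasakian structure in its Sasaki cone. This settles the first half of the assertion with no additional work, because it exhibits some Reeb vector field in $\gt_3^+(\bfp)$ whose associated Sasakian structure is positive, regardless of the hypothesis on $l$.

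For the indefinite half, the plan is to single out the specific Reeb vector field $R=R^\infty_{j,k,l}=(2k-j+1)H_1+jH_2+lH_3+lH_4$ used in Theorem \ref{Hirzorbi2}. It satisfies $\eta_0(R)>0$, so it lies in the unreduced Sasaki cone $\gt_3^+(\bfp)$ by Lemma \ref{Sasconered}. Part (3) of Theorem \ref{Hirzorbi2} tells us that the Sasakian structure with this Reeb field is positive if and only if $2l>2(k-j)+1$; hence under the standing hypothesis $2l\leq 2(k-j)+1$ it fails to be positive. Finally I would invoke the dichotomy recalled in Section 1 (Lemma 5.1 of \cite{Boy10b}): every Sasakian structure occurring in a toric contact structure of Reeb type is either positive or indefinite, never null and never negative. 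Since the Sasakian structure attached to $R$ is not positive, it must therefore be indefinite, which completes the proof.

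I do not anticipate any genuine obstacle here: the statement is a direct consequence of Corollary \ref{s3s2cor}, part (3) of Theorem \ref{Hirzorbi2}, and the positive-or-indefinite dichotomy, exactly paralleling Corollary \ref{posindef}. The only point that requires a moment's care is the upgrade from ``not positive'' to ``indefinite'', which is legitimate precisely because the cited Lemma 5.1 of \cite{Boy10b} rules out the null and negative cases for Sasakian structures of Reeb type; without that input one would only conclude non-positivity. Since the proof of Theorem \ref{Hirzorbi2} itself is deferred to the reader, it would be consistent to present this corollary's proof in the same one-line style.
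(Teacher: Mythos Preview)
Your proposal is correct and follows exactly the approach the paper intends: the paper gives no separate proof for Corollary \ref{posindef2}, merely prefacing it with ``Similar to Corollary \ref{posindef} we have'', and your argument is precisely that analogy spelled out (Corollary \ref{s3s2cor} for the positive structure, part (3) of Theorem \ref{Hirzorbi2} together with the positive-or-indefinite dichotomy for the indefinite one). Your explicit mention of the dichotomy from Lemma 5.1 of \cite{Boy10b} to pass from ``not positive'' to ``indefinite'' is a helpful clarification that the paper leaves implicit even in the proof of Corollary \ref{posindef}.
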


We denote the set of $j=1,\cdots,k$ such that $\bfp=(j,2k-j+1,l,l)$ is admissible by $\calj_\cala^\infty=\calj_\cala^\infty(k,l)$.
Since the integers $g_j,n_j$ are both odd for all $j\in\calj_\cala(k,l)$, the map $g$ maps the set $\calj_\cala(k,l)$ to the set of positive odd integers less than or equal to $l$. Thus, we have
\begin{equation}\label{glevsetodd}
\calj_\cala^\infty(k,l)=\bigsqcup_{odd~i=1}^lg^{-1}(i)
\end{equation}
Similar to Lemmas \ref{evenHirgro},\ref{oddHirgro} and \ref{cardlem} we find

\begin{lemma}\label{oddHirgro2}
Let $j\in g^{-1}(i)\subset\calj_\cala^\infty(k,l)$. Then  the symplectic form $\gro_{k,l,i}$ on the quotient $S_{n_j}$ is independent of $j$ and satisfies
$$[\gro_{k,l,i}]=i\gra_{E_{-1}}+(k+\frac{i+1}{2})\gra_L.$$
Furthermore, $\#g^{-1}(l)=\lceil\frac{2k-l+1}{2l}\rceil.$
\end{lemma}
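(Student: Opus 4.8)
The plan is to run the $X_\infty$-analogue of the arguments proving Lemmas \ref{evenHirgro} and \ref{oddHirgro}. By part (1) of Theorem \ref{Hirzorbi2} the quotient of $M_\bfp$ by the circle generated by $R$ is the odd orbifold Hirzebruch surface $(S_{n_j},\grD_{m_j})$ with $l=m_jg_j$, $2k-2j+1=n_jg_j$, and $l,g_j,n_j,m_j$ all odd. Since $d\eta_{k,l,j}=\pi^*\gro_{k,l,j}$ is exact, the class $[\gro_{k,l,j}]$ lies in the kernel of $\pi^*:H^2(S_{n_j},\bbr)\to H^2(X_\infty,\bbr)\cong\bbr\grg$, hence (as $H^2(X_\infty,\bbz)$ is torsion free) in the kernel over $\bbz$ as well. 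So the problem reduces to computing $\pi^*\gra_L$ and $\pi^*\gra_{E_{-1}}$ as multiples of $\grg$, and then identifying which primitive element of this kernel is the one produced by the Boothby--Wang construction of Theorem \ref{Hirzorbi2}.

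First I would note $\pi^*\gra_L=-l\grg$, exactly as in Lemma \ref{t2quot}: $\gra_L$ is the pullback of the area form on the $\bbc\bbp^1$-factor coming from $(z_3,z_4)$, and over that factor the Reeb circle wraps $l$ times with reversed orientation. Next, by Lemma \ref{orbcan} applied with ramification index $m_j$ on the branch divisor $E+F=2E_0$, we have $c_1^{orb}(S_{n_j},\grD_{m_j})=\frac{2}{m_j}\gra_{E_0}+2\gra_L=\frac{2}{m_j}\gra_{E_{-1}}+\bigl(2+\frac{1}{m_j}\bigr)\gra_L$ after substituting $E_{-1}=E_0-\frac12 L$. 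Arguing as in the proof of Lemma \ref{evenHirgro}, $c_1^{orb}$ pulls back to $c_1(\cald_{j,2k-j+1,l,l})=(2(k-l)+1)\grg$ (Lemma \ref{c_1D}); combined with $\pi^*\gra_L=-l\grg$ this lets me solve for $\pi^*\gra_{E_{-1}}=\frac{m_j(2k+g_j+1)}{2}\grg$, where the numerator is even because $g_j$ is odd, so this is a genuine integral multiple of $\grg$.

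Writing $[\gro_{k,l,i}]=a\gra_{E_{-1}}+b\gra_L$ and imposing $a\,\pi^*\gra_{E_{-1}}+b\,\pi^*\gra_L=0$ gives $a(2k+g_j+1)=2bg_j$ (using $l=m_jg_j$). With the same normalization as in the proof of Lemma \ref{oddHirgro} --- $[\gro_{k,l,i}]$ is the primitive generator of the kernel inside $H^2_{orb}(S_{n_j},\grD_{m_j},\bbz)$, oriented so the K\"ahler form is positive --- this forces $a=g_j=i$ and $b=k+\frac{i+1}{2}$, which is an integer since $i$ is odd. Since neither $\pi^*\gra_L=-l\grg$ nor $\pi^*\gra_{E_{-1}}$ involves $n_j$, the coefficients $a,b$ are the same for every $j\in g^{-1}(i)$, so $[\gro_{k,l,i}]$ depends only on $i$ (and $k$), justifying the notation $\gro_{k,l,i}$.

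Finally, for the cardinality: $g_j=l$ is equivalent to $l\mid 2k-2j+1$, and since $l$ is odd this congruence pins $j$ down uniquely modulo $l$; counting its solutions in the range $1\le j\le k$ gives $\lceil\tfrac{2k-l+1}{2l}\rceil$, in exact parallel with the count of Karshon \cite{Kar03} recorded in Lemma \ref{cardlem}. The two steps I expect to need the most bookkeeping are the second one --- tracking parities and divisibilities so that $\pi^*\gra_{E_{-1}}$, and hence $b=k+\frac{i+1}{2}$, really come out integral --- and the last one, where one has to reconcile the naive solution count of $l\mid 2k-2j+1$ with the admissibility constraints $\gcd(j,l)=\gcd(2k-j+1,l)=1$ (which, when $g_j=l$, both collapse to $\gcd(2k+1,l)=1$) so that the ceiling formula is exact on $\calj_\cala^\infty(k,l)$.
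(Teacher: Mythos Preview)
Your proposal is correct and follows precisely the approach the paper indicates: the paper's entire proof is the sentence ``Similar to Lemmas \ref{evenHirgro}, \ref{oddHirgro} and \ref{cardlem} we find,'' and you have carried out exactly that analogy, including the correct determination of $\pi^*\gra_{E_{-1}}$ from the $c_1^{orb}$ equation together with $\pi^*\gra_L=-l\grg$, the primitivity normalization giving $a=i$, and the Karshon-style count for $\#g^{-1}(l)$. Your flagged caveat about admissibility (that both gcd conditions reduce to $\gcd(2k+1,l)=1$ when $g_j=l$) is a genuine point the paper leaves implicit.
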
 

\begin{example}\label{Xinftyex}
We consider the analogue on $X_\infty$ of Example \ref{Ypqex}, so $(k,l)=(p,p)$ with $p$ odd and $j=p-q$. The contact structure is $\cald_{p-q,p+q+1,p,p}$ with $c_1(\cald_{p-q,p+q+1,p,p})=1$. The admissibility conditions are $\gcd(q,p)=1=\gcd(q+1,p)$. The function $g$ satisfies $g_j=\gcd(2q+1,p)$. If $p$ is prime then the set of admissible $q$ is $\{1,\cdots,p-2\}$, and $g=1$ except when $q=\frac{p-1}{2}$ in which case $g=p$. The latter is smooth and corresponds to the trivial orbifold $(S_0,\emptyset)$. This has symplectic class 
$$[\gro_{p,p}]=p\gra_{E_{-1}}+ \frac{3p+1}{2}\gra_L;$$
whereas, the $p-3$ elements in $g^{-1}(1)$ have symplectic class 
$$[\gro_{p,p}]=\gra_{E_{-1}}+ (p+1)\gra_L.$$

For a case when $p$ is not prime consider $p=9$. Then $\calj_\cala^\infty(9,9)=\{2,5,8\}$ which corresponds to $q=7,4,1$. We see that $g^{-1}(9)=\{5\}$ giving the smooth first Hirzebruch surface $(S_1,\emptyset)$ with symplectic class $[\gro_{9,9,9}]=9\gra_{E_{-1}}+14\gra_L$, while, $g^{-1}(3)=\{2,8\}$ giving the orbifold Hirzebruch surfaces $(S_5,\grD)$ with $m_2=3$, and $(S_1,\grD)$ with $m_8=3$, respectively, with symplectic class $[\gro_{9,9,3}]=3\gra_{E_{-1}}+11\gra_L$.
\end{example}

\subsection{Equivalent Contact Structures}
Here we show that certain inequivalent toric contact structures are equivalent as contact structures. The proof uses the fact that the identity map $(S_n,\grD_m)\ra{1.6} (S_n,\emptyset)$ is a Galois cover, and combines this with the work of Karshon \cite{Kar99,Kar03} for the smooth case. 

\begin{theorem}\label{equivcont}
Consider the toric contact structures $(S^2\times S^3,\cald_{j,2k-j,l,l})$ and $(X_\infty,\cald_{j,2k-j+1,l,l})$ of Theorem \ref{Hirzorbi} and \ref{Hirzorbi2}, respectively. Then,
\begin{enumerate}
\item for each fixed $1\leq i\leq l$, the contact structures $\cald_{j,2k-j,l,l}$ are $T^2$-equivariantly isomorphic for all $j\in g^{-1}(i)_{even}$, and the contactomorphism group $\gC\go\gn(\cald_{j,2k-j,l,l})$ has at least $\#g^{-1}(i)_{even}$ conjugacy classes of maximal tori of dimension $3$;
\item for each fixed $1\leq i\leq l$, the contact structures $\cald_{j,2k-j,l,l}$ are $T^2$-equivariantly isomorphic for all $j\in g^{-1}(i)_{odd}$, and the contactomorphism group $\gC\go\gn(\cald_{j,2k-j,l,l})$ has at least $\#g^{-1}(i)_{odd}$ conjugacy classes of maximal tori of dimension $3$;
\item for each fixed $1\leq i\leq l$, the contact structures $\cald_{j,2k-j+1,l,l}$ are $T^2$-equivariantly isomorphic for all $j\in g^{-1}(i)\subset\calj_\cala^\infty(k,l)$, and the contactomorphism group $\gC\go\gn(\cald_{j,2k-j+1,l,l})$ has at least $\#g^{-1}(i)$ conjugacy classes of maximal tori of dimension $3$;
\item The $T^2$-equivariantly isomorphic contact structures in items $(1)-(3)$ are not $T^3$-equivariantly isomorphic.
\end{enumerate}
\end{theorem}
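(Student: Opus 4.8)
The plan is to establish (1)--(3) by a reduction to the smooth case, and then deduce (4) from Lerman's classification of toric contact manifolds. Fix a level set, say $j\in g^{-1}(i)_{even}$; the other two cases are identical with $S^2\times S^2$ replaced by $\widetilde{\bbc\bbp^2}$ and Lemma~\ref{evenHirgro} by Lemma~\ref{oddHirgro} or~\ref{oddHirgro2}. By Theorem~\ref{Hirzorbi} the Reeb field $R_{j,k,l}$ exhibits $M_\bfp=S^2\times S^3$ as the total space of a Seifert $S^1$-bundle over the orbifold Hirzebruch surface $(S_{n_j},\grD_i)$ with orbifold symplectic form $\gro_{k,l,i}$. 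The crucial point is that, by Lemma~\ref{evenHirgro}, the orbifold Euler class $[\gro_{k,l,i}]$ of this bundle, the ramification index $m_i=l/i$, and the underlying smooth surface ($S^2\times S^2$ throughout the level set) all depend only on $(k,l,i)$; only the Hirzebruch number $n_j=2(k-j)/i$, which records the complex and the toric structure, varies with $j$. So the assertion to be proved is that changing $n_j$ does not change the contactomorphism type of the total space, equivariantly for a common subtorus $T^2\subset T^3$.

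To prove this I would pass through the identity Galois cover $\BOne_{S_{n_j}}\colon(S_{n_j},\grD_i)\to(S_{n_j},\emptyset)$ of~\cite{GhKo05}, which transfers the Boothby--Wang/Seifert data over the orbifold to data over the smooth Hirzebruch surface $S_{n_j}$, with the branch divisor $E+F$ recorded as a disjoint pair of $\gro_{k,l,i}$-symplectic spheres carrying the normal ramification weight $m_i$. On $S_{n_j}$ the circle generated by $L_\bfp$ acts Hamiltonianly, and the moment data of~(\ref{genact1}) determine its fixed points, its isotropy spheres, and the symplectic areas attached to them; by the $\gro$-Lemmas these data are independent of $j$ within the level set. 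Hence, by Karshon's classification of Hamiltonian circle actions on rational ruled surfaces via decorated graphs~\cite{Kar99} (together with~\cite{Kar03}, which controls how such an $S^1$ sits inside the ambient toric $T^2$), the circle actions for different $j$ are conjugate in $\mathrm{Symp}(S^2\times S^2,\gro_{k,l,i})$ by a symplectomorphism respecting the marked spheres and their ramification weights. Transporting this back up the Galois cover and lifting through the orbifold Boothby--Wang construction produces a contactomorphism $M_{\bfp}\to M_{\bfp'}$ equivariant for the common $T^2$. Note that this contactomorphism need not carry $R_{j,k,l}$ to $R_{j',k,l}$; indeed the quotient orbifolds $(S_{n_j},\grD_i)$ and $(S_{n_{j'}},\grD_i)$ are not themselves isomorphic when $n_j\neq n_{j'}$, which is precisely why the matching must be made via the Karshon graph of the auxiliary circle subaction rather than via the full quotient.

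For (4), suppose toward a contradiction that $\cald_{j,2k-j,l,l}$ and $\cald_{j',2k-j',l,l}$ with $1\le j<j'\le k$ were $T^3$-equivariantly isomorphic, i.e.\ equivalent as toric contact manifolds in the sense of Definition~\ref{torconequiv}. By Lerman's classification~\cite{Ler02a} their moment cones in $\gt_3^*\cong\bbr^3$ would be equivalent under $GL(3,\bbz)$. These cones are the $4$-faceted rational polyhedral cones determined by $\bfp$ through the circle reduction (the inward facet normals being the images of the standard basis of $\bbz^4$ under $\bbz^4\to\bbz^4/\langle L_\bfp\rangle$), and the Hirzebruch number $n_j$ is a $GL(3,\bbz)$-invariant of the cone; more precisely, a $GL(3,\bbz)$-equivalence forces $(j,2k-j,l,l)$ and $(j',2k-j',l,l)$ to agree up to the elementary symmetries listed after Definition~\ref{admis} (the interchanges $z_1\leftrightarrow z_2$, $z_3\leftrightarrow z_4$, and of the two pairs). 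For $1\le j<j'\le k$ none of these identifies the two quadruples, a contradiction. Geometrically: a $T^3$-equivariant contactomorphism would carry a chosen quasi-regular Reeb field to a quasi-regular Reeb field whose quotient is again an orbifold Hirzebruch surface, so the Hirzebruch number, equivalently the branch-component areas $k\pm\tfrac12 n_j i$, would have to agree, which fails since $n_j\neq n_{j'}$. Thus the $\#g^{-1}(i)$ mutually contactomorphic structures of (1)--(3) yield $\#g^{-1}(i)$ distinct conjugacy classes of maximal tori in the contactomorphism group.

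The main obstacle is the reduction in the second step: making precise, through the Galois cover, how the Seifert/orbifold data along the branch divisors $E$ and $F$ is encoded on the smooth Hirzebruch surface, and then certifying that the resulting Hamiltonian circle (and torus) actions satisfy the hypotheses of Karshon's classification --- in particular that the ramification weights are carried along by the conjugating symplectomorphism, so that it genuinely descends to an isomorphism of the five-dimensional contact total spaces. The cohomological input, namely the $\gro$-Lemmas, is exactly what forces Karshon's numerical invariants to coincide across the level set; the substantive work lies in this reduction and in the equivariant lift back up to dimension five.
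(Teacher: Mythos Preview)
Your approach to (1)--(3) is essentially the paper's: pass through the Galois cover $\BOne_{S_{n_j}}:(S_{n_j},\grD_{m_i})\to(S_{n_j},\emptyset)$, invoke Karshon's $S^1$-equivariant symplectomorphism between smooth Hirzebruch surfaces carrying the same $\gro_{k,l,i}$, check that it preserves the branch divisors, transport back through the Galois cover to an orbifold symplectomorphism, and lift via Boothby--Wang to a $T^2$-equivariant contactomorphism. The ``main obstacle'' you isolate is exactly what the paper addresses in a separate lemma: Karshon's symplectomorphism $K$ intertwines the $S^1$-moment maps $|z_1|^2$, and since its fixed set is $(z_1=0)\sqcup(z_2=0)$, the two divisors are preserved individually; hence $K$ lifts to an orbifold diffeomorphism $K_i$ of $(S_{n_j},\grD_{m_i})$. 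One small slip: the Hamiltonian circle on $S_{n_j}$ is not ``the circle generated by $L_\bfp$''---$L_\bfp$ generates the reduction circle on $S^7$, already quotiented out---but rather the $S^1$ generated by $\grr$ in the residual $T^2$ of~(\ref{torict2act}).

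For (4) you argue differently. The paper argues downstairs: Karshon's map is only $S^1$-equivariant, so the $2$-tori on $(S_{n_j},\grD_{m_i})$ for different $j$ lie in distinct conjugacy classes of $\gH\ga\gm$ (this is the orbifold analogue of \cite{Kar03}), and then \cite{Boy10a} lifts non-conjugate $2$-tori in $\gH\ga\gm$ to non-conjugate $3$-tori in $\gC\go\gn$. Your route---Lerman's classification plus $GL(3,\bbz)$-inequivalence of the moment cones---also works, but the assertion that a $GL(3,\bbz)$-equivalence of the cones forces the quadruples to agree up to the listed elementary symmetries is not automatic and would need a direct check on the facet normals; the paper's route sidesteps this by recycling Karshon's non-conjugacy result on the base.
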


\begin{proof}
The proof of (1)-(3) are quite analogous, so we give the details for (1) only. By Theorem \ref{Hirzorbi} the contact structure is the orbifold Boothby-Wang construction over the symplectic orbifold $(S_{n_j},\gro_{k,l,j})$ and by Lemma \ref{evenHirgro} the form $\gro_{k,l,j}$ only depends on $g_j=i$. Then for each $j\in g^{-1}(i)_{even}$ we consider the Galois cover $\BOne_{n_j}:(S_{n_j},\grD_{m_i})\ra{1.7} (S_{n_j},\emptyset)$ with $n_j$ even, and both spaces having the same K\"ahler form, namely $\gro_{k,l,i}$ of Lemma \ref{evenHirgro}. Now Karshon \cite{Kar03} shows that $(S_{n_j},\emptyset)$ and $(S_{n_{j'}},\emptyset)$ are $S^1$-equivariantly symplectomorphic with the same symplectic form $\gro_{k,l,i}$ (but not the same K\"ahler structure) as long as $0\leq j'=j-2r$ for some non-negative integer $r$. We denote such a symplectomorphism by $K$. Now we have a commutative diagram
\begin{equation}\label{Galoisdiag}
\begin{matrix}(S_{n_j},\grD_{m_i}) &\fract{K_i}{\ra{2.8}} &(S_{n_{j'}},\grD_{m_i}) \\
                         \decdnar{\BOne_{n_j}} &&           \decupar{\BOne^{-1}_{n_{j'}}} \\
                         (S_{n_j},\emptyset)&\fract{K}{\ra{2.8}} &(S_{n_{j'}},\emptyset)
\end{matrix}
\end{equation}
which defines the upper horizontal arrow $K_i$ and shows that it too is an $S^1$-equivariant symplectomorphism. We claim that $K_i$ is also an orbifold diffeomorphism. This follows from Lemma \ref{Kardiv} below. But then as shown in \cite{Ler03b,Boy10a} this symplectomorphism lifts to a $T^2$-equivariant contactomorphism. 

Here, and hereafter, by $g^{-1}(i)$ we mean any of the three sets $g^{-1}(i)_{even},\break g^{-1}(i)_{odd}$, or $g^{-1}(i)\subset \calj_\cala^\infty(k,l)$. 
Since our contact structures are independent of $j\in g^{-1}(i)$ up to isomorphism, we now denote the contact structures of items (1),(2) and (3) by $\cald_{k,l,i,e},\cald_{k,l,i,o}$, and $\cald_{k,l,i,\infty}$, respectively.
To prove (4) we first notice that as in \cite{Kar03} the orbifold symplectomorphism $K_i$ is only $S^1$-equivariant, not $T^2$-equivariant. So the corresponding $2$-tori belong to different conjugacy classes in the group $\gH\ga\gm((\calb,\grD_{m_i}),\gro_{k,l,i})$ of Hamiltonian symplectomorphisms, where $\calb$ is the symplectic orbifold $((S^2\times S^2,\grD_{m_i})\gro_{k,l,i})$ or $((X_\infty,\grD_{m_i})\gro_{k,l,i})$ as the case may be. But then
by Theorem 6.4 of \cite{Boy10a} these lift to non-conjugate $3$-tori in $\gC\go\gn(\cald_{k,l,i,e})$. Hence, the contact structures $\cald_{j,2k-j,l,l}$ for different $j\in g^{-1}(i)$ are inequivalent as toric contact structures. The same holds for $\cald_{j,2k-j+1,l,l}$.
\end{proof}

\begin{lemma}\label{Kardiv}
The Karshon symplectomorphism $K$ of diagram (\ref{Galoisdiag}) leaves the divisors $(z_1=0)$ and $(z_2=0)$ invariant.
\end{lemma}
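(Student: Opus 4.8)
The plan is to realize the two divisors $(z_1=0)$ and $(z_2=0)$ as the fixed--point set of the circle for which $K$ is equivariant, and then to rule out a possible interchange of the two components by means of the moment map.

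First I would pin down the relevant circle. In the notation of Theorem \ref{Hirzorbi}, the Hirzebruch surface $S_{n_j}=\mu^{-1}(0)/T^2$, obtained from the action (\ref{genact1}), carries its own toric $T^2$--structure as the projectivized bundle $\bbp(\calo(n_j)\oplus\calo)\ra{1.6}\bbc\bbp^1$, and Karshon's symplectomorphism $K$ of \cite{Kar03} is equivariant with respect to the subcircle $S^1\subset T^2$ that rotates the $\bbc\bbp^1$--fibres while fixing the base $\bbc\bbp^1$ pointwise. The fixed--point set of this $S^1$ is exactly the two sections of the bundle, which in our coordinates are the surfaces $E=(z_1=0)$ and $F=(z_2=0)$; these are the maximum $Z_{\max}$ and the minimum $Z_{\min}$ of the associated $S^1$--moment map, and they are disjoint because $(z_1,z_2)\neq(0,0)$ on the locus in question. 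The same picture holds verbatim on $S_{n_{j'}}$, with the same symplectic class $\gro_{k,l,i}$ of Lemma \ref{evenHirgro}. So the first (bookkeeping) step is to check that under the identifications of Theorem \ref{Hirzorbi} Karshon's map is equivariant for this fibre--rotating circle, and that, say, $(z_1=0)=Z_{\max}$ and $(z_2=0)=Z_{\min}$ for every $j$ in the level set $g^{-1}(i)_{even}$ — which holds because the sign of the weights of the circle on $z_1$ and $z_2$ does not vary with $j$.

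Now, since $K\colon(S_{n_j},\emptyset)\ra{1.6}(S_{n_{j'}},\emptyset)$ is an $S^1$--equivariant diffeomorphism it carries the fixed--point set to the fixed--point set, so $K\bigl((z_1=0)\sqcup(z_2=0)\bigr)=(z_1=0)\sqcup(z_2=0)$. As each side is a disjoint union of two connected surfaces, $K$ either preserves or interchanges the two components. To exclude the interchange I would use the moment map: $\mu_{n_{j'}}\circ K$ is a moment map for the $S^1$--action on $S_{n_j}$, hence differs from $\mu_{n_j}$ by an additive constant, and normalising both moment maps consistently (for instance so that $\min=0$) forces the constant to vanish; therefore $K$ carries $Z_{\min}$ to $Z_{\min}$ and $Z_{\max}$ to $Z_{\max}$. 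Combined with the identification $(z_1=0)=Z_{\max}$, $(z_2=0)=Z_{\min}$ on both surfaces this gives $K\bigl((z_1=0)\bigr)=(z_1=0)$ and $K\bigl((z_2=0)\bigr)=(z_2=0)$, which is the assertion. The same argument applies, mutatis mutandis, in the cases $g^{-1}(i)_{odd}$ and $g^{-1}(i)\subset\calj_\cala^\infty(k,l)$, using the odd Hirzebruch surfaces and Lemmas \ref{oddHirgro} and \ref{oddHirgro2}.

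The main obstacle is not the logic above but the first step: isolating precisely which subgroup of the torus of the Hirzebruch surface Karshon's construction is equivariant for, and matching her orientation and normalisation conventions with ours, so that the two sections really are the extrema of the moment map in a $j$--independent fashion. I note finally that for the only use made of this lemma — lifting $K$ to the orbifold diffeomorphism $K_i$ of diagram (\ref{Galoisdiag}) — one in fact needs only that $K$ maps the branch locus $(z_1=0)\cup(z_2=0)$ onto $(z_1=0)\cup(z_2=0)$, since the ramification index $m_i$ is the same on both components; the componentwise statement is the stronger conclusion that the moment--map argument delivers.
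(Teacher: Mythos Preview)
Your proposal is correct and follows essentially the same route as the paper: identify the two divisors as the fixed--point set of the fibre--rotating circle for which $K$ is $S^1$--equivariant, and then separate the two components via the moment map. The only difference is that the paper invokes Karshon's Proposition~4.3 directly to conclude that $K$ intertwines the $S^1$--moment maps $\mu_S(\bfz)=|z_1|^2$ exactly (no additive constant to normalise away), whereas you deduce this from general principles; both arguments are valid and the paper's citation is simply the more economical way to handle precisely the ``main obstacle'' you flagged.
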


\begin{proof}
The $T^2$ action on any orbifold Hirzebruch surface $(S_n,\grD_m)$ can be taken as 
\begin{equation}\label{torict2act}
([w_1,w_2]\times [y_1,y_2,y_3])\mapsto ([\grt w_1,w_2]\times [\grt^ny_1,y_2,\grr y_3]) 
\end{equation}
where, as in the proof of Theorem \ref{Hirzorbi}, the coordinates are $(w_1,w_2)=(z_3,z_4)$ and $(y_1,y_2,y_3)=(z_2^{m}z_3^{n},z_2^{m}z_4^{n},z_1^{m})$. By Proposition 4.1 of \cite{Kar99} $K$ is an $S^1$-equivariant symplectomorphism, where the $S^1$ is that generated by $\grr$, and the fixed point set of this action is the disjoint union $(z_1=0)\sqcup (z_2=0)$. But by Proposition 4.3 of \cite{Kar99} $K$ also intertwines the two $S^1$ moment maps. But in both cases these are represented by $\mu_S(\bfz)=|z_1|^2$. So the divisors $(z_1=0)$ and $(z_2=0)$ are left invariant separately by $K$. 
\end{proof}

\begin{remark}\label{traprem}
Using the Delzant theorem for symplectic orbifolds in \cite{LeTo97}, it is straightforward to construct the labeled polytope corresponding to the symplectic orbifold $((S_n,\grD_m),\gro_{k,l,i})$. It is the labeled Hirzebruch trapezoid shown in Figure \ref{labHirtrap}. The Galois cover $\BOne_{S_n}:(S_n,\grD_m)\ra{1.7} (S_n,\emptyset)$ induces a map on this Hirzebruch trapezoid that simply removes the labels on the vertical edges. This implies that the corresponding Karshon graphs \cite{Kar99} are the same. Hence, Theorem 4.1 of \cite{Kar99} easily generalizes to the types of orbifolds consider here, and the symplectomorphism $K_i$ in diagram \ref{Galoisdiag} can be constructed directly from this. 
\end{remark}

\bigskip
\begin{figure} 
\setlength{\unitlength}{0.00053333in}
\begingroup\makeatletter\ifx\SetFigFont\undefined%
\gdef\SetFigFont#1#2#3#4#5{%
  \reset@font\fontsize{#1}{#2pt}%
  \fontfamily{#3}\fontseries{#4}\fontshape{#5}%
  \selectfont}%
\fi\endgroup%

\ 
\begin{picture}(4156,3039)(-100,-10)
\put(100,2900){$(0,k+\frac{n}{2}i)$}
\path(12,3012)(612,1212)(612,612)(12,612)(12,3012)
\put(387,2112){slope $-n$}
\put(-400,200){$(0,0)$}
\put(500,200){$(i,0)$}
\put(700,1200){$(i,k-\frac{n}{2}i)$}
\put(-50,2900){$\circ$}
\put(-50,520){$\circ$}
\put(520,520){$\circ$}
\put(550,1120){$\circ$}
\put(-300,1500){$m$}
\put(700,800){$m$}

\end{picture}

\caption{The Hirzebruch trapezoid with label $m$ on the two vertical axes corresponding to an orbifold Hirzebruch surface $(S_n,\grD_m)$.}
\label{labHirtrap}
\end{figure}

\subsection{Inequivalence of Contact Structures}
As discussed previously the inequivalence of contact structures is detected first by the first Chern class $c_1(\cald)$ and then by contact homology. The contact structures $\cald_{j,2k-j,l,l}$ and $\cald_{j,2k-j+1,l,l}$ are clearly inequivalent since they live on different manifolds, so adopting Proposition \ref{different} to our current notation we have 


\begin{theorem}\label{contactinequiv2} 
The contact structures $\cald_{j,2k-j,l,l}$ and $\cald_{j',2k'-j',l',l'}$, and $\cald_{j,2k-j+1,l,l}$ and $\cald_{j',2k'-j'+1,l',l'}$ are inequivalent if $k'\neq k$.
\end{theorem}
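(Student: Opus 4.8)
The plan is to reduce Theorem~\ref{contactinequiv2} to the already-established Proposition~\ref{different} by matching notation. First I would recall that in the parametrization $\bfp=(j,2k-j,l,l)$ for $S^2\times S^3$ the first two weights are $p_1=j$ and $p_2=2k-j$, so that $p_1+p_2=2k$; similarly in the parametrization $\bfp=(j,2k-j+1,l,l)$ for $X_\infty$ we have $p_1+p_2=2k+1$. Thus the integer $k$ is, up to the affine change recorded above, exactly the quantity $p_1+p_2$ (respectively $\tfrac{1}{2}(p_1+p_2)$ or $\tfrac{1}{2}(p_1+p_2-1)$) that Proposition~\ref{different} shows to be a contact invariant.

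The key steps, in order, are: (i) observe that the two contact structures $\cald_{j,2k-j,l,l}$ and $\cald_{j',2k'-j',l',l'}$ correspond to admissible $4$-tuples $(j,2k-j,l,l)$ and $(j',2k'-j',l',l')$ of the form treated in Proposition~\ref{different}, namely with equal third and fourth weights; (ii) compute the invariant $p_1+p_2$ in each case, obtaining $2k$ and $2k'$ respectively, and note $2k\neq 2k'$ exactly when $k\neq k'$; (iii) invoke Proposition~\ref{different} directly to conclude that if $k\neq k'$ the two contact manifolds cannot be contactomorphic. For the $X_\infty$ family one repeats the identical argument with the invariant taking values $2k+1$ and $2k'+1$. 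The mixed comparison---$\cald_{j,2k-j,l,l}$ against $\cald_{j',2k'-j'+1,l',l'}$---needs no contact homology at all, since these live on non-diffeomorphic manifolds ($S^2\times S^3$ versus $X_\infty$), as already recorded in the discussion preceding the theorem and detected by $w_2$ via Theorem~\ref{quotthm}.

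I do not expect any serious obstacle here: the content of the theorem is entirely contained in Proposition~\ref{different} together with the dictionary between the $(j,k,l)$ notation and the $\bfp$ notation. The only thing requiring a line of care is verifying that the admissibility hypotheses of Proposition~\ref{different} (namely $\gcd(p_i,p_j)=1$ for $i=1,2$, $j=3,4$) are exactly the hypotheses $\gcd(j,l)=\gcd(2k-j,l)=1$ imposed in Theorems~\ref{Hirzorbi} and~\ref{Hirzorbi2}, so that the two families of contact structures under consideration are genuinely instances of the admissible $4$-tuples $(p_1,p_2,l,l)$ to which Proposition~\ref{different} applies. Once this bookkeeping is in place the proof is immediate.

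\begin{proof}
The contact structures $\cald_{j,2k-j,l,l}$ and $\cald_{j,2k-j+1,l,l}$ live on $S^2\times S^3$ and $X_\infty$ respectively, and these manifolds are not diffeomorphic by Theorem~\ref{quotthm}; hence these two are never contactomorphic. It remains to compare two structures living on the same manifold. In the notation of Proposition~\ref{different}, the contact structure $\cald_{j,2k-j,l,l}$ corresponds to the admissible $4$-tuple $(p_1,p_2,l,l)$ with $p_1=j$, $p_2=2k-j$, so $p_1+p_2=2k$, and the admissibility condition $\gcd(j,l)=\gcd(2k-j,l)=1$ of Theorem~\ref{Hirzorbi} is precisely the hypothesis of Proposition~\ref{different}. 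Likewise $\cald_{j',2k'-j',l',l'}$ corresponds to a $4$-tuple with $p_1'+p_2'=2k'$. If $k'\neq k$ then $p_1+p_2=2k\neq 2k'=p_1'+p_2'$, so by Proposition~\ref{different} these contact manifolds are not contactomorphic. The argument for $\cald_{j,2k-j+1,l,l}$ and $\cald_{j',2k'-j'+1,l',l'}$ on $X_\infty$ is identical, with the invariant $p_1+p_2$ taking the values $2k+1$ and $2k'+1$, which differ exactly when $k\neq k'$.
\end{proof}
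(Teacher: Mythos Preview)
Your proposal is correct and matches the paper's approach exactly: the paper does not give a formal proof of Theorem~\ref{contactinequiv2} but simply states that it follows by ``adopting Proposition~\ref{different} to our current notation,'' which is precisely the dictionary translation you carry out. Your observation that the mixed comparison is already handled by Theorem~\ref{quotthm} also agrees with the sentence immediately preceding the theorem in the paper.
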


\begin{remark}
Unfortunately, combining Theorems \ref{equivcont} and \ref{contactinequiv2} does not answer our equivalence problem completely even in our restrictive cases. For example, it would be nice to know that the $\cald_{j,2k-j,l,l}$ are all contact equivalent as $j$ runs through all admissible values from $1$ to $k$. However, our equivalence statement in Theorem \ref{equivcont} only assures equivalence on the level sets of the map $g$, i.e. if we fix $i\in \{1,\cdots,l\}$, then $\cald_{j,2k-j,l,l}$ are equivalent for all $j\in g^{-1}(i)$. Nevertheless, this is enough to give a complete answer to the equivalence problem for the $Y^{p,q}$ of \cite{GMSW04a} which in our notation is $\cald_{p-q,p+q,p,p}$. See Corollary \ref{Ypqcor} below.

The general case with vanishing first Chern class $c_1(\cald)$ was studied in \cite{CLPP05,MaSp05b} where it was shown that all the toric contact structures admit a compatible Sasaki-Einstein metric. These depend on three parameters $a,b,c$ with values in $\bbz^+$ which in our notation is given by $\cald_{a,b,c,a+b-c}$. Except for the subclass $Y^{p,q}$ these fall outside of the scope of our analysis.
\end{remark}

Another statement of contact inequivalence is obtained as an immediate consequence of Theorem \ref{EGHgen}, namely

\begin{corollary}\label{ineq12}
The contact structures on $S^2\times S^3$ described by items (1) and (2) of Theorem \ref{equivcont} are inequivalent.
\end{corollary}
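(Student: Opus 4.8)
The corollary is to be read with the parameters $(k,l)$ held fixed: for those, all the contact structures in items (1) and (2) carry the same first Chern class $c_1(\cald)=2(k-l)\grg$, so a real argument is needed; if the parameters vary so that $c_1$ or $k$ changes, inequivalence is already given by the first Chern class or by Proposition \ref{different} (equivalently Theorem \ref{contactinequiv2}). So the plan is simply to feed the quasi-regular models of Theorem \ref{Hirzorbi} into Theorem \ref{EGHgen}. Fix $i$ with $g^{-1}(i)_{even}\neq\emptyset$ and $i'$ with $g^{-1}(i')_{odd}\neq\emptyset$, choose $j\in g^{-1}(i)_{even}$ and $j'\in g^{-1}(i')_{odd}$, and consider the two quasi-regular representatives: by Theorem \ref{Hirzorbi}(1), $\cald_{k,l,i,e}$ is the orbifold Boothby-Wang space over the even orbifold Hirzebruch surface $(S_{n_j},\grD_{m_i},\gro_{k,l,i})$, whose underlying symplectic $4$-manifold is $S^2\times S^2$, while $\cald_{k,l,i',o}$ fibers over the odd orbifold Hirzebruch surface $(S_{n_{j'}},\grD_{m_{i'}},\gro_{k,l,i'})$, whose underlying space is $\widetilde{\bbc\bbp^2}$.

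First I would check the hypotheses of Theorem \ref{EGHgen}: both total spaces are $S^2\times S^3$, hence simply connected; both chosen Reeb vector fields generate circle actions, so both contact structures are quasi-regular and toric of Reeb type in dimension $5$; and in either case the only nontrivial orbifold locus is the branch divisor — two disjoint smooth rational curves, $E$ and $F$ — so every orbifold stratum (the whole Hirzebruch surface, or one of those copies of $\bbc\bbp^1$) has smooth underlying space, exactly as required.

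Then comes the one genuinely geometric point, the exceptional-sphere dichotomy. The underlying space $\widetilde{\bbc\bbp^2}$ of the base in item (2) is a nontrivial blow-up, hence non-minimal, so $(\widetilde{\bbc\bbp^2},\gro_{k,l,i'})$ contains a symplectically embedded exceptional sphere — concretely the class $E_{-1}$ of Lemma \ref{oddHirgro}, which satisfies $E_{-1}\cdot E_{-1}=-1$ and is realized by an embedded $J$-holomorphic rational curve for a suitable $\gro_{k,l,i'}$-compatible almost complex structure $J$. By contrast, on the even Hirzebruch surface the intersection form in the basis $\gra_{E_0},\gra_L$ is $E_0\cdot E_0=L\cdot L=0$, $E_0\cdot L=1$, so every class $aE_0+bL$ has self-intersection $2ab$, never $-1$; thus $S^2\times S^2$ carries no homology class of square $-1$, hence no exceptional sphere, and the same holds for its smooth strata $E$ and $F$, whose self-intersections $n_j$ and $-n_j$ are even. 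Theorem \ref{EGHgen} now applies directly and shows that $\cald_{k,l,i,e}$ and $\cald_{k,l,i',o}$ are not contactomorphic.

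The step I expect to demand the most care — though it is internal to Theorem \ref{EGHgen} rather than a new idea — is confirming that the even-Hirzebruch model really meets the hypothesis used in that theorem's proof, namely that no spherical class $A$ on $(S_{n_j},\grD_{m_i})$ has $\langle c_1^{orb},A\rangle$ equal to one minus the relevant degree-shifting number, so that the Gromov-Witten potential, and with it the odd-degree part of the contact homology, vanishes on that side. This follows by combining the parity computation above with the explicit description of $c_1^{orb}$ in Lemma \ref{orbcan} and a little bookkeeping of the orbifold degree shift along the strata $E$ and $F$; beyond Theorems \ref{Hirzorbi} and \ref{EGHgen} nothing new is required.
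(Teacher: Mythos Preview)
Your proposal is correct and follows exactly the route the paper takes: the paper states Corollary \ref{ineq12} as ``an immediate consequence of Theorem \ref{EGHgen}'' with no further argument, and you have simply spelled out why that theorem applies --- the even-Hirzebruch base $S^2\times S^2$ has intersection form with no $(-1)$-classes while the odd-Hirzebruch base $\widetilde{\bbc\bbp^2}$ carries the exceptional class $E_{-1}$, and both orbifold structures have smooth underlying strata. Your final paragraph, flagging the implicit weight/Chern-number hypothesis buried in the proof of Theorem \ref{EGHgen}, is a fair caveat, but it concerns that theorem rather than this corollary; the paper itself does not revisit it here.
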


\subsection{Applications of our Results}
Actually more is true than proven in Theorem \ref{equivcont}. Since the base orbifolds $(S_{n_j},\grD_{m_i})$ have the same symplectic form $\gro_{k,l,i}$ for all $j\in g^{-1}(i)$, this lifts to a contactomorphism $\varphi:\cald_{j,2k-j,l,l}\ra{1.6} \cald_{j',2k-j',l,l}$ such that $\varphi^*\eta_{j',2k-j',l,l}=\eta_{j,2k-j,l,l}$.   Since the reduction process carries a preferred complex structure $J$ along with it, the different indices $j$ represent different transverse complex structures $J$. So by using the contactomorphisms $\varphi$ for each admissible $j$, we have

\begin{corollary}\label{bouqcor}
The contact structures $\cald_{k,l,i,e}$, and $\cald_{k,l,i,o}$ on $S^2\times S^3$ and $\cald_{k,l,i,\infty}$ on $X_\infty$ each admit a Sasaki bouquet $\gB_N$ of toric Sasakian structures with $N=\# g^{-1}(i)$. Furthermore, the intersection $\cap_j\grk(\cald,J_j)$ of all the Sasaki cones is an open subset of the Lie algebra $\gt_2$ of a 2-dimensional torus.
\end{corollary}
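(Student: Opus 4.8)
The plan is to build the bouquet by transporting, along the contactomorphisms already constructed, the natural transverse complex structure attached to each admissible $j$, and then to identify the common part of the resulting Sasaki cones by a linear‑algebra argument. First I would record that for each $j$ in $g^{-1}(i)$ (meaning any of the three sets $g^{-1}(i)_{even}$, $g^{-1}(i)_{odd}$, or $g^{-1}(i)\subset\calj_\cala^\infty(k,l)$) the reduction of Theorem \ref{Hirzorbi} (resp. Theorem \ref{Hirzorbi2}) equips $\cald_{j,2k-j,l,l}$ (resp. $\cald_{j,2k-j+1,l,l}$) with an integrable transverse CR structure $J_j$: namely the horizontal lift of the K\"ahler complex structure on the orbifold Hirzebruch surface $(S_{n_j},\grD_{m_i})$. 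This gives a toric Sasakian structure, hence a non‑empty Sasaki cone $\grk(\cald_{j,2k-j,l,l},J_j)$ with underlying unreduced cone $\gt_3^+(\bfp_j)$ of Lemma \ref{Sasconered}. By Lemma \ref{evenHirgro} (resp. Lemma \ref{oddHirgro}, Lemma \ref{oddHirgro2}) the base symplectic form $\gro_{k,l,i}$ is independent of $j$, so the orbifold Boothby–Wang construction together with the discussion preceding this corollary supplies contactomorphisms $\varphi_j\colon\cald_{j,2k-j,l,l}\ra{1.6}\cald$ onto the single contact structure $\cald:=\cald_{k,l,i,e}$ (with $\varphi_{j_0}=\mathrm{id}$ for a chosen base point $j_0$). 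Setting $\widetilde J_j:=(\varphi_j)_*J_j$, the pair $(\cald,\widetilde J_j)$ is a toric CR structure whose maximal torus is $\varphi_j\bigl(T^3(\bfp_j)\bigr)\subset\gC\go\gn(M,\cald)$.

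Next I would invoke Theorem \ref{equivcont}(4): for $j\neq j'$ the tori $\varphi_j(T^3(\bfp_j))$ and $\varphi_{j'}(T^3(\bfp_{j'}))$ lie in distinct conjugacy classes of maximal tori in $\gC\go\gn(M,\cald)$, so by Theorem 7.6 of \cite{Boy10a} the CR structures $(\cald,\widetilde J_j)$ are pairwise non‑biholomorphic and the Sasaki cones $\grk(\cald,\widetilde J_j)$ are genuinely distinct. Their union is then, by definition (\ref{Sasbouq}), a Sasaki bouquet $\gB_N(\cald)=\bigcup_{j\in g^{-1}(i)}\grk(\cald,\widetilde J_j)$ with $N=\#g^{-1}(i)$, each leaf of which is a toric Sasaki cone; the same argument applies verbatim in the odd and $X_\infty$ cases, giving $\cald_{k,l,i,o}$ and $\cald_{k,l,i,\infty}$.

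For the intersection, I would use that all the $\varphi_j$ are equivariant for one and the same $2$‑torus $T_2\subset\gC\go\gn(M,\cald)$ — in the reduction picture the torus generated by $L_\bfp$ together with the Reeb field $R_{j,k,l}$, which is literally unchanged throughout. Each $\widetilde J_j$ is $T_2$‑invariant (the K\"ahler structure downstairs is $T^2(\bfp_j)$‑invariant and $\varphi_j$ is $T_2$‑equivariant), so $\gt_2\subset\varphi_j(\gt_3(\bfp_j))$ and $\gt_2\cap\gc\gr^+(\cald,\widetilde J_j)=\{X\in\gt_2\mid\eta(X)>0\text{ on }M\}=:\gt_2^+$ is the same non‑empty open cone, independent of the extension $\widetilde J_j$; hence $\gt_2^+\subset\bigcap_j\gt_3^+(\cald,\widetilde J_j)$. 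Conversely, for $j\neq j'$ the two distinct $3$‑tori $\varphi_j(T^3(\bfp_j))$, $\varphi_{j'}(T^3(\bfp_{j'}))$ both contain $T_2$, so the intersection of their Lie algebras is a subspace of dimension $\le 2$ containing the $2$‑dimensional $\gt_2$, hence equal to $\gt_2$. Therefore $\bigcap_j\varphi_j(\gt_3(\bfp_j))=\gt_2$, so $\bigcap_j\gt_3^+(\cald,\widetilde J_j)=\gt_2^+$, an open subset of $\gt_2$; passing to the reduced cones only quotients by finite Weyl groups, which preserve $\gt_2$, so the conclusion is unchanged.

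The main obstacle is really the bookkeeping around distinctness: one must be certain that different admissible $j$ give non‑conjugate maximal tori — so that $N$ does not collapse — which is exactly Theorem \ref{equivcont}(4) combined with the CR‑uniqueness statement Theorem 7.6 of \cite{Boy10a}, and one must check that pairwise these $3$‑tori meet in no more than the common $\gt_2$. A secondary point to handle with care is the passage between the unreduced cones $\gt_3^+$ and the reduced cones $\grk$: it is cleanest to perform the intersection computation with the unreduced cones and descend only at the end, using that the relevant Weyl groups are finite and fix $\gt_2$ pointwise.
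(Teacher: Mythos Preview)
Your construction of the bouquet matches the paper's argument (the paragraph immediately preceding the corollary): since the base orbifolds $(S_{n_j},\grD_{m_i})$ all carry the same symplectic form $\gro_{k,l,i}$, the contactomorphisms transport the distinct transverse complex structures $J_j$ onto a single contact structure, and Theorem \ref{equivcont}(4) guarantees the resulting cones are distinct. Your treatment of the intersection claim is more detailed than anything the paper writes out.

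There is, however, a concrete error in your identification of the common $2$-torus $T_2$. The torus generated by $L_\bfp$ and $R_{j,k,l}$ is $T^2(\bfp)\subset T^4$, precisely the torus by which one quotients $S^3\times S^3$ to reach the base orbifold (see the exact sequence (\ref{laexact}) and diagram (\ref{orbibranchcover2})). Since $L_\bfp$ spans $\gt_1(\bfp)=\ker(\gt_4\to\gt_3(\bfp))$, the image of $T^2(\bfp)$ in $T^3(\bfp)$ acting on $M_\bfp$ is only the Reeb circle, not a $2$-torus; moreover both $L_\bfp$ and $R_{j,k,l}$ depend explicitly on $j$, so neither is ``literally unchanged''. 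The correct $T_2$ is the one produced in the proof of Theorem \ref{equivcont}: the Reeb $S^1$ together with the lift of Karshon's $S^1$ --- the $\grr$-circle of the action (\ref{torict2act}), which acts on the $z_1$-coordinate and preserves the branch divisors by Lemma \ref{Kardiv}. With this correction your linear-algebra argument for the intersection is sound: the transported $3$-tori all contain this $T_2$, and being pairwise non-conjugate (hence unequal as subgroups) their Lie algebras meet pairwise in dimension at most $2$, forcing $\bigcap_j\varphi_j(\gt_3(\bfp_j))=\gt_2$ and hence $\bigcap_j\gt_3^+(\cald,\widetilde J_j)=\gt_2^+$.
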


\begin{example}
Consider the contact structure $\cald_{12,8,2,o}$ on $S^2\times S^3$. The admissible $j$'s are $j=1,3,5,7$ and $g_j=2$ for all $j$. So (ii) of Theorem \ref{equivcont} gives a $T^2$-equivariant contact equivalence of the $T^3$-equivariantly inequivalent toric contact structures $$\cald_{1,23,8,8}\approx \cald_{3,21,8,8}\approx \cald_{5,19,8,8}\approx \cald_{7,17,8,8}.$$
This implies that the number of conjugacy classes $\gn(\cald_{12,8,2,o},3)$ of $3$-tori in $\gC\go\gn(\cald_{12,8,2,o})$ is at least $4$. Furthermore, by (3) of Theorem \ref{Hirzorbi} the induced Sasakian metrics are positive for $\cald_{5,19,8,8}$ and $\cald_{7,17,8,8}$, whereas they are indefinite for the remaining two.

Another contact structure with the same first Chern class as $\cald_{12,8,2,o}$, namely $c_1=8\grg$, is $\cald_{14,10,2,o}$. This consists of the two $T^3$-equivariantly inequivalent toric contact  structures $\cald_{1,27,10,10}$ and $\cald_{7,21,10,10}$, but only for the latter is the induced Sasakian structure positive. In this case we have  $\gn(\cald_{14,10,2,o},3)\geq 2$. Moreover, it follows from Theorem \ref{contactinequiv2} that the contact structures $\cald_{12,8,2,o}$ and $\cald_{14,10,2,o}$ are inequivalent.
\end{example}

As mentioned previously there is one subclass of contact structures on $S^2\times S^3$ where a complete solution to the equivalence problem can be obtained, and they are all known to admit extremal (actually Sasaki-Einstein) metrics.
\begin{corollary}\label{Ypqcor}
The contact structures $Y^{p,q}$ and $Y^{p',q'}$ on $S^2\times S^3$ are inequivalent if and only if $p'\neq p$. Furthermore, the isotopy class of the contact structure defined by $Y^{p,1}$ admits a $\phi(p)$-bouquet $\gB_{\phi(p)}(Y^{p,q})$ such that each of the $\phi(p)$ Sasaki cones admits a unique Sasaki-Einstein metric. Moreover, these Einstein metrics are non-isometric as Riemannian metrics.
\end{corollary}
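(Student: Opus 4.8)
The plan is to harvest the corollary from results already in hand. Write $Y^{p,q}=\cald_{p-q,p+q,p,p}$ as in Example \ref{Ypqex}, so that in the notation $\cald_{j,2k-j,l,l}$ one has $j=p-q$, $k=l=p$, and $c_1(\cald)=0$. For the easy direction, since the invariant of Proposition \ref{different} (equivalently Theorem \ref{contactinequiv2}) is $p_1+p_2=2p=2k$, the structures $Y^{p,q}$ and $Y^{p',q'}$ are contact inequivalent as soon as $p\neq p'$. For the converse, the point is that by Example \ref{Ypqex} the whole admissible set $\calj_\cala(p,p)$ is a single level set of the map $g$ --- namely $g^{-1}(1)_{even}$ if $p$ is odd and $g^{-1}(2)_{odd}$ if $p$ is even --- of cardinality $\phi(p)$. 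Hence item (1) or (2) of Theorem \ref{equivcont} applies directly: the $\cald_{p-q,p+q,p,p}$ are all $T^2$-equivariantly isomorphic as $q$ ranges over the admissible residues, so they represent a single contact structure $\cald_p$ on $S^2\times S^3$, and any $Y^{p,1}$ lies in this class.

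Next I would record the bouquet structure. Corollary \ref{bouqcor} (with $i=1$, resp.\ $i=2$) equips $\cald_p$ with a Sasaki bouquet $\gB_{N}(\cald_p)=\bigcup_{j}\grk(\cald_p,J_j)$ where $N=\#\,\calj_\cala(p,p)=\phi(p)$ and the $J_j$ are the distinct transverse complex structures carried along by the reductions; and Theorem \ref{equivcont}(4) says the associated maximal $3$-tori $T^3_j\subset\gC\go\gn(S^2\times S^3,\cald_p)$ are pairwise non-conjugate. This already yields the assertion about conjugacy classes of maximal tori.

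Now I would bring in the metrics. By \cite{GMSW04a} every $Y^{p,q}$ admits a Sasaki--Einstein metric whose Reeb field lies in the cone $\grk(\cald_p,J_j)$ with $j=p-q$; uniqueness of a Sasaki--Einstein metric inside a fixed Sasaki cone follows from two facts: the Reeb field is the unique critical point of the normalized volume functional on the cone, and, with the Reeb field fixed, the transverse Kähler--Einstein metric on the log del Pezzo orbifold quotient of Example \ref{Ypqex} is unique. Thus each of the $\phi(p)$ cones carries exactly one Sasaki--Einstein metric $g_j$, all living on the same $(S^2\times S^3,\cald_p)$.

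Finally, the non-isometry statement. Suppose $\psi\colon(S^2\times S^3,g_j)\to(S^2\times S^3,g_{j'})$ were an isometry. The isometry group of a toric Sasaki--Einstein $5$-manifold has a rank-three maximal torus containing $T^3_j$, so, after composing $\psi$ with an element of $\mathrm{Isom}^0(g_{j'})$, it carries $T^3_j$ onto $T^3_{j'}$. The crucial point is then that $\psi$ is a contactomorphism of $(S^2\times S^3,\cald_p)$: since $g_j$ is not the round sphere metric, a Sasaki--Einstein metric determines its Reeb field up to $\mathrm{Isom}^0$, so after a further adjustment $\psi_*R_j=R_{j'}$, whence $\psi^*\eta_{j'}=\iota_{R_j}g_j=\eta_j$ and $\psi_*\cald_p=\cald_p$. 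Then $\psi$ realizes a conjugacy in $\gC\go\gn(S^2\times S^3,\cald_p)$ taking $T^3_j$ to $T^3_{j'}$, contradicting Theorem \ref{equivcont}(4). I expect this last step --- the passage from an isometry of the Sasaki--Einstein metrics to a contactomorphism of $\cald_p$, i.e.\ the rigidity statement that a non-spherical Sasaki--Einstein metric remembers its Reeb field up to isometry --- to be the main obstacle; the rest is assembling the pieces already established.
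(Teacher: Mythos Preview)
Your proposal is correct and follows essentially the same route as the paper. The only point worth noting is that the step you flag as the main obstacle --- passing from an isometry of the Sasaki--Einstein metrics to a contactomorphism of $\cald_p$ --- is dispatched in the paper by a direct appeal to Tanno's theorem \cite{Tan70} (cf.\ Lemma 8.1.17 of \cite{BG05}): an isometry between two (non-spherical) Sasakian metrics carries the Sasakian structure to itself or to its conjugate, so $\psi^{-1}\circ\Phi_{q'}\circ\psi=\pm\Phi_q$, the $3$-tori are conjugate in $\gC\go\gn(\cald_p)$, and Theorem \ref{equivcont}(4) gives the contradiction.
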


\begin{proof}
Applying Theorem \ref{equivcont} to Example \ref{Ypqex} shows that $Y^{p,q}$ is contactomorphic to $Y^{p,1}$ for all admissible $q$. But Abreu and Macarini \cite{AbMa10} show that the underlying contact structures  of $Y^{p,1}$ and $Y^{p',1}$ are inequivalent if $p'\neq p$. (This also follows from Proposition \ref{different}.)
By Corollary \ref{bouqcor} there are precisely $\phi(p)$ Sasaki cones in the bouquet. 
The fact that there is a Sasaki-Einstein metric in the Sasaki cone for each $Y^{p,q}$ was first shown in \cite{GMSW04a} while its uniqueness in the Sasaki cone is proved in \cite{CFO07}. 

To prove the last statement suppose to the contrary that the Sasaki-Einstein metrics $g_q$ and $g_{q'}$ are isometric, that is there is a diffeomorphism $\psi$ of $S^2\times S^3$ such that $\psi^*g_{q'}=g_q$. Then by a theorem of Tanno \cite{Tan70} (see also Lemma 8.1.17 of \cite{BG05}) the transformed Sasakian structure $\cals^\psi$ is either $\cals_q$ itself or its conjugate Sasakian structure $\cals^c_q=(-R_q,-\eta_q,-\Phi_q,g_q)$. In either case $\psi$ is a contactomorphism from $Y^{p,q}$ to $Y^{p,q'}$ satisfying $\psi^{-1}\circ \Phi_{q'}\circ \psi=\pm\Phi_q$. But this implies that the corresponding $3$-tori are conjugate which contradicts (4) of Theorem \ref{equivcont}. 
\end{proof}

\begin{example} 
The analogues of the $Y^{p,q}$s on the non-trivial bundle $X_\infty$ are described in Example \ref{Xinftyex}. For simplicity we consider only the case when $p$ is an odd prime in which case there are $p-3$ admissible values for $q$, namely $1,\cdots,\frac{p-1}{2}-1,\frac{p-1}{2}+1,\cdots,p-2$. These inequivalent toric structures are $T^2$-equivariantly equivalent contact structures by (3) of Theorem \ref{equivcont} and their induced Sasakian structures are all positive by (3) of Theorem \ref{Hirzorbi2}. Moreover, the contactomorphism group of this contact structure has at least $p-3$ maximal tori of dimension 3.
\end{example}

\subsection{Some Remarks Concerning Extremal Sasakian Structures}
As with K\"ahler geometry it is of interest to determine the most preferred Sasakian metrics, and as in K\"ahler geometry it seems reasonable to study the critical points of the (now transverse) Calabi functional \cite{BGS06,BGS07b}. In \cite{Boy10b} the first author described bouquets of extremal Sasakian structures on $S^3$-bundles over $S^2$, and the existence of extremal Sasakian metrics on $X_\infty$ was proven. It is not our intention here to delve much further into the existence of such extremal Sasakian structures, but rather discuss briefly their relation to our current work.

Corollary \ref{bouqcor} gives a partial generalization of Theorems 4.1 and 4.2 of \cite{Boy10b}. In this reference it was shown that when the quotient by the Reeb vector field is a smooth manifold, each Sasaki cone in a bouquet admits an extremal Sasakian metric. This follows from well known work of Calabi. It would be interesting to generalize this to the orbifold case by generalizing the method of \cite{GhKo05} to extremal metrics.

As with toric symplectic structures, all toric contact structures of Reeb type admit a compatible Sasakian metric \cite{BG00b}. Furthermore, in our present situation we have

\begin{corollary}\label{torext}
Every toric contact structure on an $S^3$ bundle over $S^2$ admits extremal Sasakian metrics with positive Ricci curvature.
\end{corollary}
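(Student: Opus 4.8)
The plan is to combine Corollary \ref{s3s2cor} with the transverse Calabi theory, exactly as in \cite{Boy10b}, but now allowing the quotient orbifold to carry a nontrivial branch divisor. By Corollary \ref{s3s2cor} every toric contact structure $\cald_\bfp$ on an $S^3$-bundle over $S^2$ sits as a Seifert $S^1$-orbibundle over the product of weighted projective lines $\bbc\bbp(\bp_1,\bp_2)\times \bbc\bbp(\bp_3,\bp_4)$, and this base is log del Pezzo; in particular $\cald_\bfp$ admits a positive Sasakian structure $\cals=(\xi,\eta,\Phi,g)$ in its Sasaki cone, i.e. one whose basic first Chern class $c_1(\calf_\xi)$ is represented by a positive definite $(1,1)$-form. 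The first step is to produce an extremal representative of the transverse K\"ahler class: the base orbifold is toric, and a weighted product of projective lines with any toric orbifold K\"ahler class admits a Bochner-flat (hence extremal) orbifold K\"ahler metric — this is the content of the references \cite{Bry01,DaGa06,Gau09} already invoked in the proof of Lemma \ref{t2quot}, and more generally toric extremal orbifold K\"ahler metrics in a fixed basic class follow from Abreu-type existence results on a product of weighted projective lines. Pulling this transverse extremal K\"ahler metric up via the orbifold Boothby-Wang construction yields, by the standard correspondence between extremal transverse K\"ahler metrics and extremal Sasakian metrics \cite{BGS06,BGS07b}, an extremal Sasakian metric in the given Sasaki cone of $\cald_\bfp$.

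The second step is to upgrade this extremal metric so that it has positive Ricci curvature. Here the key tool is the transverse homothety $\cals\mapsto \cals_a$ recalled in the excerpt, together with Theorem 7.5.31 of \cite{BG05}: since $\cals$ is positive, there exists $a_0\in\bbr^+$ so that $g_a$ has positive Ricci curvature for all $a<a_0$. The point to check is that the transverse homothety preserves extremality. This is immediate from the definition: $\cals_a$ has the same transverse holomorphic structure and its transverse K\"ahler form is simply $a$ times that of $\cals$, so the transverse scalar curvature scales by $a^{-1}$ and the condition that $\partial^\#$ (the transverse $(1,0)$-gradient) of the transverse scalar curvature be a transversely holomorphic vector field is invariant under this rescaling. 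Hence $\cals_a$ is again extremal, and for $a<a_0$ it has positive Ricci curvature. Since the homothety keeps us in the same Sasaki cone of the same underlying contact structure $\cald_\bfp$, this produces an extremal Sasakian metric of positive Ricci curvature compatible with $\cald_\bfp$.

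The main obstacle is the existence of an extremal orbifold K\"ahler metric on the base in the relevant basic K\"ahler class when the branch divisor is nontrivial. For a genuine product of weighted projective lines one can take the (Bochner-flat) product of the extremal metrics on each factor, so the honest difficulty only appears for the Hirzebruch-type quotients treated in Theorems \ref{Hirzorbi} and \ref{Hirzorbi2}, where one needs an analogue of Calabi's construction of extremal metrics on Hirzebruch surfaces in the orbifold setting. As remarked after Corollary \ref{bouqcor}, extending the method of \cite{GhKo05} to extremal metrics would handle this; alternatively, since the base is a toric symplectic orbifold, one can appeal to the existence of extremal toric K\"ahler metrics on orbifold Hirzebruch surfaces (these are toric, of complexity one, and admissible in the sense of Apostolov--Calderbank--Gauduchon--T\o nnesen-Friedman, where explicit extremal metrics are constructed). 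Once the transverse extremal metric is in hand, the remaining steps are routine given the results quoted above.
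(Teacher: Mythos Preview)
Your argument is essentially the paper's: invoke Corollary~\ref{s3s2cor} to realize the contact structure as an $S^1$-orbibundle over $\bbc\bbp(\bp_1,\bp_2)\times\bbc\bbp(\bp_3,\bp_4)$, take the product of Bryant's Bochner-flat (hence extremal) metrics on the factors, lift to an extremal Sasakian metric via \cite{BGS06}, and then apply Theorem~7.5.31 of \cite{BG05} to obtain positive Ricci curvature. Your explicit verification that the transverse homothety preserves extremality is a useful detail the paper leaves implicit.

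Your final paragraph, however, reflects a misreading of what must be proved and introduces a spurious obstacle. The Hirzebruch-type quotients $(S_{n_j},\grD_{m_i})$ of Theorems~\ref{Hirzorbi} and~\ref{Hirzorbi2} arise from a \emph{different} choice of Reeb vector field in the Sasaki cone of the same contact structure. The corollary only asks for the existence of \emph{one} extremal Sasakian metric compatible with $\cald_\bfp$, not one for every Reeb field. Corollary~\ref{s3s2cor} guarantees that \emph{every} toric contact structure on an $S^3$-bundle over $S^2$---including those you label ``Hirzebruch-type''---has in its Sasaki cone the Reeb field $R_\bfp$ whose quotient is the product of weighted projective lines. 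So the product-of-Bochner-flat-metrics construction already covers all cases, and no extremal existence result on orbifold Hirzebruch surfaces is needed here. You may simply delete the last paragraph.
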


\begin{proof}
By Corollary \ref{s3s2cor} every toric contact structure on an $S^3$ bundle over $S^2$ can be realized as an orbifold fibration over a product of weighted projective spaces $\bbc\bbp(\bp_1,\bp_2)\times \bbc\bbp(\bp_3,\bp_4)$ and have positive Sasakian structures. By a result of Bryant \cite{Bry01} all weighted projective spaces admit Bochner-flat metrics and these are extremal \cite{DaGa06}, and the product of extremal K\"ahler metrics is extremal. So these extremal K\"ahler orbifold metrics lift to extremal Sasakian metrics \cite{BGS06} which since $\bbc\bbp(\bp_1,\bp_2)\times \bbc\bbp(\bp_3,\bp_4)$ is log del Pezzo will have a deformation to a Sasakian metric with positive Ricci curvature by Theorem 7.5.31 of \cite{BG05}. Moreover, it follows from a theorem of Calabi \cite{Cal85} that the toric symmetry is retained by these metrics.
\end{proof}

Corollary \ref{torext} implies that each Sasaki cone in every Sasaki bouquet $\gB_N$ of toric contact structures on an $S^3$ bundle over $S^2$ admits extremal Sasakian metrics of positive Ricci curvature. Since the moment cone of any $S^3$ bundle over $S^2$ has exactly 4 facets, recent results of Legendre \cite{Leg09,Leg10} show that every toric contact structure on an $S^3$ bundle over $S^2$ admits at least one and at most seven distinct rays in the Sasaki cone consisting of Sasakian structures whose metrics have constant scalar curvature. Moreover, she shows that for the Wang-Ziller manifolds $M^{1,1}_{k,l}$ with $k>5l$ there exist two distinct rays in the Sasaki cone whose Sasakian metrics have constant scalar curvature. This corresponds to the case $(\bp_1,\bp_2)=1=(\bp_3,\bp_4)$ of Lemma \ref{t2quot}. 

An interesting question which appears to be unknown at this time is whether any Sasaki cones on these toric contact structures are exhausted by extremal Sasaki metrics. There are only several several known cases where this occurs, namely, the standard CR structure on the spheres $S^{2n+1}$ \cite{BGS06}, the Heisenberg group \cite{Boy09}, and $T^2$-invariant contact structures of Reeb type on $T^2\times S^3$ 
\cite{BoTo11}.

\section*{Appendix: Orbifold Gromov-Witten invariants} 
In this appendix, for the convenience of the reader,  we layout some framework and definitions for Gromov-Witten invariants and the so-called Gromov-Witten potential
for compact symplectic manifolds and orbifolds. In this paper we only consider the genus $0$ 
invariants. The Gromov-Witten invariants that we are interested in occur in the base orbifold $\calz$ of an orbibundle $\pi:M\ra{1.6} \calz$ with $\dim(M)=5.$  Hence we are in the semipositive case and we can define the Gromov-Witten invariants as in ~\cite{McSa04}.  Our version of Gromov-Witten theory for symplectic orbifolds
comes from ~\cite{ChRu02}.  The main difference here is that our marked points, and hence our cohomology classes taken as arguments for the
invariant have constraints determining in which orbifold stratum the curves in question lie.  This is an issue since generally some homology classes may live in several strata.  

Roughly speaking a Gromov-Witten is a count of rigid $J$-holomorphic curves representing a homology class 
$A \in H_{2}(M, \bb{Z})/({\rm torsion})$ in general position with marked points in a symplectic manifold $M$ for which the 
marked points are mapped into the Poincar\'{e} duals of certain cohomology classes.  For example we may ask how many spheres, 
(or lines), intersect 2 generic points in $\bb{CP}^n.$  In this case we have $2$ marked points, a top cohomology class, and 
for $A$ the class of a line, $[L].$ 

To make this precise let $(M, \omega)$ be a compact symplectic manifold, let $J$ be an $\omega$-compatible almost complex structure. 
Consider the moduli space 
$$\oldmathcal{M}_{0, k}^{A}(M, J)$$ 
of genus $0$ stable $J$-holomorphic curves into $M$ representing the class $A$ and assume here that we have regularity of the relevant 
linearized Cauchy-Riemann operator for the class $A$, either via some circumstances or by some sort of abstract perturbation argument.  Note also that when we discuss Gromov-Witten theory for compact symplectic manifolds we will consider only 
\emph{somewhere injective} curves.  We define maps
$$ev_j : \oldmathcal{M}_{0, k}^{A}(M, J) \to M$$
and
$$ev : \oldmathcal{M}_{0, k}^{A}(M, J) \to M^{\times k}$$
by evaluation at the marked points.  

By semipositivity the evaluation map represents a submanifold of $M^{\times k}$ of dimension 
$$2n + \langle 2c_1(M) , A \rangle + 2k + 6.$$

Now we define the Gromov-Witten invariant as a homomorphism
$$GW_{A,k}^M: H^*(M) ^{\otimes k} \otimes H_{*}(\oldmathcal{M}_{0,k}^A(M. J)) \to \bb{Q}$$ 
encoded formally as the integral
$$GW^M_{A,k}(\alpha_1, \ldots , \alpha_k) 
:= \int_{\oldmathcal{M}_{0,k}^A(M. J)} ev_1^*\alpha_1 \cup \cdots \cup ev_k^*\alpha_k \cup \pi^*[\oldmathcal{M}_{0,k}^A(M. J)].$$

This is the definition for manifolds.  This definition can be used without the semipositivity condition as long as there is 
a construction of an appropriate object on which to integrate.  Since we will be working in dimension $4$ this will not be an issue.

To extend this definition to orbifolds, there are issues with the definitions of $J$-holomorphic curves, since the idea of a map 
between orbifolds can be a rather sticky issue.  We content ourselves, here, to know that we have a notion of \emph{good} map, and
we will defer to ~\cite{ChRu04,ChRu02} for the analytic set-up.  With that said, we still must extend the definition above so that it makes
sense in a stratified space.  We should also note that the orbifold cohomology of Chen and Ruan is \emph{not} the same as the orbifold 
cohomology mentioned earlier.  This cohomology is simply a way to organize how various classes interact with the 
stratification of the orbifold.  
As in the manifold case we start with a compact symplectic orbifold, $\calz$ and pick a compatible almost complex structure $J.$  We then 
consider moduli spaces of (genus 0) $J$-holomorphic orbicurves into $M$ representing a homology class $A \in H_2(\calz,\bb{Q}).$  But we now
need to consider a new piece of data which organizes the intersection data so that it is compatible with the stratification.  The extra
data will be defined by a $k$-tuple $\textbf{x}$, of orbifold strata, $(\calz_{1}, \ldots, \calz_{k}).$ The length $k$ of $\bfx$ 
should coincide with the number of marked points.  We will write such a moduli space as
$$\oldmathcal{M}_{0,k}^A(\calz, J, \textbf{x}),$$
and require that the evaluation takes the $j$-th marked point into $\calz_{j}.$
The compactification is similar to the manifold case, and consists of stable maps with the obvious adjustments, the caveat being that
we must choose our lift to an orbicurve.  After an appropriate construction of cycles as in the manifold case, Chen and Ruan use 
a virtual cycle construction, so we can define this invariant as in the smooth case above, but we integrate over (the compactification of)
$\oldmathcal{M}_{0,k}^A(\calz, J, \textbf{x}).$  We will write these invariants
$$GW^{\calz}_{A,k, \textbf{x}}(\alpha_1, \ldots , \alpha_k).$$

Another key difference is that this moduli space differs from the predicted dimension in the 
smooth case by a factor of $-2\iota(\textbf{x}),$ the so-called \emph{degree shifting number}. (Again for the definition see ~\cite{ChRu02}.)
The Gromov-Witten invariants satisfy a list of axioms developed by Kontsevich and Manin \cite{KoMa94,KoMa97}.  We will not list all of the axioms, but will mention only some 
which are used in the text.  We use the orbifold notation, for a manifold we would just delete $\textbf{x}$ from the notation, setting
$\iota(\textbf{x})=0.$
\begin{enumerate}
 \item [i.] \textbf{Effective}: $GW_{A,k, \textbf{x}}^{\calz}(\alpha_1, \ldots, \alpha_n) = 0$ as long as $\omega(A) < 0.$
 \item [ii.] \textbf{Grading}:  $GW_{A,k, \textbf{x}}^{\calz}(\alpha_1, \ldots, \alpha_n) \neq 0$ only if
   $$\sum_j deg(\alpha_j) = dim(\calz) + 2c_1(A) + 2k -6 -2\iota(\textbf{x}).$$
 \item [iii.] \textbf{Divisor}: Let $\textbf{x}^j = \textbf{x}$ with the $j$th component removed.  Suppose that for each 
each component of $\textbf{x}, x_i$ that if $x_i$ is mapped into the orbifold singular locus, that that stratum 
is non-singular as a variety.  If $deg(\alpha_n) =2$ then 
$$GW_{A,k, \textbf{x} }^{\calz} (\alpha_1, \ldots, \alpha_n) = (\int_{A}\alpha_n )GW_{A,k-1, \textbf{x}^{n} }^{\calz}(\alpha_1, \ldots, \alpha_{n-1}).$$
\end{enumerate}
  
Now we are in a position to define the Gromov-Witten potential.  This is a generating function which gives a formal power series 
whose coefficients give Gromov-Witten invariants.  It is a way to organize all the information from these invariants into one big
package.  

We give the definition here for the manifold case.  Pick a basis of $H^2(M),$ $a_1, \ldots, a_n$, for a vector $t$ and
a cohomology class $a,$ write $a:=a_t= \sum_i t_i a_i.$  
\begin{defn}Let $(M, \omega)$ , $J$ be as above.  Define the genus $0$ \textbf{Gromov-Witten Potential} as
 $$\textbf{f}(a_t) = \sum_A \sum_k \frac{1}{k!} GW_{A, k}^{M}(a_t, \ldots, a_t) z^{c_1(A)}.$$
The corresponding formula for orbifolds is obtained by accounting for the vector $\textbf{x}.$
\end{defn}

\newcommand{\etalchar}[1]{$^{#1}$}
\def\cprime{$'$} \def\cprime{$'$} \def\cprime{$'$} \def\cprime{$'$}
  \def\cprime{$'$} \def\cprime{$'$} \def\cprime{$'$} \def\cprime{$'$}
  \def\cdprime{$''$} \def\cprime{$'$} \def\cprime{$'$} \def\cprime{$'$}
  \def\cprime{$'$}
\providecommand{\bysame}{\leavevmode\hbox to3em{\hrulefill}\thinspace}
\providecommand{\MR}{\relax\ifhmode\unskip\space\fi MR }
\providecommand{\MRhref}[2]{%
  \href{http://www.ams.org/mathscinet-getitem?mr=#1}{#2}
}
\providecommand{\href}[2]{#2}

\end{document}